\documentclass[oneside,reqno,10pt]{amsart}
\usepackage{graphicx,amsfonts,amssymb,amsmath,amsthm,url,comment}
\usepackage{color}
\usepackage[usenames,dvipsnames]{xcolor}
\usepackage[normalem]{ulem}
\usepackage{pdfsync}

\usepackage[hyperfootnotes=false, colorlinks, citecolor=	 RoyalBlue, urlcolor=blue, linkcolor=blue         ]{hyperref}

\theoremstyle{plain} 

\newtheorem{theorem}             {Theorem}  
\newtheorem{lemma}      [theorem]{Lemma}

\newtheorem{conjecture} [theorem]{Conjecture}
\newtheorem{question}   [theorem]{Question}

\theoremstyle{definition}
\newtheorem{definition} [theorem]{Definition}

\theoremstyle{remark}
\newtheorem{remark}            [theorem]  {Remark}

\renewcommand{\geq}{\geqslant}
\renewcommand{\leq}{\leqslant}



\DeclareMathOperator{\sgn}{sgn}
\DeclareMathOperator{\SL}{SL}

\DeclareMathOperator{\GL}{GL}

\DeclareMathOperator{\prim}{prim}

\DeclareMathOperator{\ad}{ad}

\DeclareMathOperator{\disc}{disc}

\DeclareMathOperator{\Hom}{Hom}
\DeclareMathOperator{\End}{End}

\def\eps{\varepsilon}

\DeclareMathOperator{\PGL}{PGL}

\DeclareMathOperator{\RS}{RS}

\DeclareMathOperator{\diag}{diag}

\DeclareMathOperator{\pr}{pr}

\DeclareMathOperator{\reg}{reg}

\DeclareMathOperator{\vol}{vol}
\DeclareMathOperator{\nr}{nr}

\DeclareMathOperator{\tr}{tr}

\def\eps{\varepsilon}

\begin{document}

\title{Microlocal lifts and quantum unique ergodicity on $\GL_2(\mathbb{Q}_p)$}

\author{Paul D. Nelson}
\email{paul.nelson@math.ethz.ch}
\address{ETH Zurich, Department of Mathematics, R{\"a}mistrasse 101, CH-8092, Zurich, Switzerland}
\subjclass[2010]{Primary 58J51; Secondary 22E50, 37A45}
\begin{abstract}
  We prove that arithmetic quantum unique ergodicity holds on
  compact arithmetic quotients of $\GL_2(\mathbb{Q}_p)$ for
  automorphic forms belonging to the principal series.
  We interpret this conclusion in terms
  of the equidistribution of eigenfunctions
  on covers of a fixed regular graph
  or along nested sequences of regular graphs.

  Our results are the first of their kind on any $p$-adic
  arithmetic quotient.  They may be understood as
  analogues of Lindenstrauss's theorem on the equidistribution
  of Maass forms on a compact
  arithmetic surface.  The new ingredients here include the
  introduction of a representation-theoretic notion of
  ``$p$-adic microlocal lifts'' with favorable properties, such
  as diagonal invariance of limit measures; the proof of
  positive entropy of limit measures in a $p$-adic aspect,
  following the method of Bourgain--Lindenstrauss; and some
  analysis of local Rankin--Selberg integrals involving
  the microlocal lifts introduced here as well as classical
  newvectors.  An important input is a measure-classification
  result of Einsiedler--Lindenstrauss.
\end{abstract}
\maketitle
\setcounter{tocdepth}{1}
\tableofcontents

\section{Introduction\label{sec:intro}}
\label{sec-1}

\subsection{Overview}\label{sec:overview}
Let $p$ be a prime number.  This article is concerned with the
limiting behavior of eigenfunctions on compact arithmetic
quotients
of the group $G := \GL_2(\mathbb{Q}_p)$.
A rich class of such
quotients is parametrized by the definite quaternion algebras
$B$ 
over $\mathbb{Q}$ that split at $p$.
A maximal order $R$ in such
an algebra
and an embedding $B \hookrightarrow M_2(\mathbb{Q}_p)$
give rise to a discrete cocompact subgroup
$\Gamma := R[1/p]^\times$ of $G$.
Fix one such $\Gamma$.
The corresponding arithmetic quotient
$\mathbf{X} := \Gamma \backslash G$ is then compact; in
interpreting this, it may help to note that the center of
$\Gamma$ is the discrete cocompact subgroup
$\mathbb{Z}[1/p]^\times$ of $\mathbb{Q}_p^\times$.

The space $\mathbf{X}$ is a $p$-adic analogue of the cotangent
bundle of an arithmetic hyperbolic surface, such as the modular
surface $\SL_2(\mathbb{Z}) \backslash \mathbb{H}$.  It comes
with commuting families of Hecke correspondences $T_\ell$
indexed by the primes $\ell \neq p$ (see \S\ref{sec:definition-t_n}).
To zeroth approximation, the space $\mathbf{X}$ is modelled by
its minimal quotient
$\mathbf{Y} := \mathbf{X}/K = \Gamma \backslash G / K$ by the
maximal compact subgroup $K := \GL_2(\mathbb{Z}_p)$ of $G$.
That quotient $\mathbf{Y}$ comes
with an additional Hecke correspondence
$T_p$.
To simplify the exposition
of \S\ref{sec:overview}, it will convenient to
assume that
\begin{equation}\label{eq:torsion-free-assumption-yay}
  \text{ ( the torsion subgroup of $\Gamma$ ) } = \{\pm 1\}.
\end{equation}
Then $\mathbf{Y}$ 
may be safely regarded as an undirected $(p+1)$-regular finite
multigraph
(see \cite{MR580949, MR1954121}, \cite[\S8]{MR2195133})
whose adjacency graph is $T_p$.
The simplifying assumption \eqref{eq:torsion-free-assumption-yay} holds when the underlying
quaternion algebra has discriminant (say) $73$, in which case 
the graph $(\mathbf{Y},T_p)$ may be depicted
as follows when $p=2,3$:\footnote{
  The images were produced
  using the ``Graph'' and ``BrandtModule'' functions in SAGE \cite{sage2015}.}
\begin{center}\label{pic:73-2}
  \includegraphics[width=4cm,height=4cm]{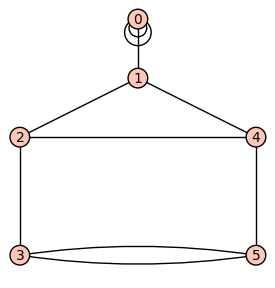}
  \includegraphics[width=4cm,height=4cm]{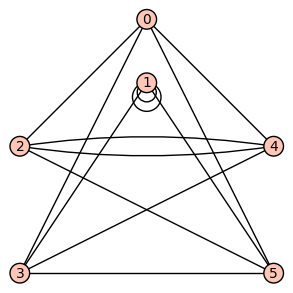}
\end{center}
Such graphs and their eigenfunctions appear naturally in several
contexts, and have been extensively studied since the pioneering
work of Brandt and Eichler \cite{MR0017775, MR0080767}; they specialize to the $p$-isogeny
graphs of elliptic curves in finite characteristic 
\cite[\S2]{MR894322},
provide an important tool for constructing spaces of modular forms
\cite{MR579066},
and their remarkable expansion properties
have been studied and applied in computer science following
\cite{MR963118}.

To study the space $\mathbf{X}$ at a finer
resolution
than that of 
its minimal quotient
$\mathbf{Y}$,
we introduce for each pair of integers $m,m'$
the notation $m..m' := \{m, m+1, \dotsc, m'\}$
and set
\begin{equation}\label{eq:example-path-m-mp}
  \mathbf{Y}_{m..m'}
  :=
  \left\{
    \begin{split}
      &\text{ non-backtracking paths } 
      x = (x_{m} \rightarrow x_{m+1} \rightarrow \dotsb \rightarrow
      x_{m'}) \\
      &\quad \quad \quad
      \text{ indexed by } m..m' 
      \text{ on the graph } (\mathbf{Y} ,T_p)
    \end{split}
  \right\}.
\end{equation}
We will recall in Definition \ref{defn:group-theoretic-paths} the 
standard group-theoretic realization
of $\mathbf{Y}_{m..m'}$
as a quotient of $\mathbf{X}$.
We may and shall identify $\mathbf{Y}_{0..0}$ with $\mathbf{Y}$.
For $m..m' \supseteq n..n'$,
we define compatible surjections
$\mathbf{Y}_{m..m'} \rightarrow \mathbf{Y}_{n..n'}$
by forgetting part of the path.
For example,
if $N \geq 0$,
then the map $\mathbf{Y}_{-N..N} \rightarrow
\mathbf{Y}_{0..0} = \mathbf{Y}$
sends a path $x$
as in \eqref{eq:example-path-m-mp}
to its central vertex $x_0$.
We define $L^2(\mathbf{Y}_{m..m'})$ with
respect to the normalized counting measure,
so that the maps
$\mathbf{Y}_{m..m'} \rightarrow \mathbf{Y}_{n..n'}$
are measure-preserving.

We wish to study the
asymptotic behavior of ``eigenfunctions''
in $L^2(\mathbf{Y}_{m..m'})$
as $|m-m'| \rightarrow \infty$.
From the arithmetic perspective,
there is a distinguished
collection of such eigenfunctions,
whose definition
is analogous to that of the set of normalized
classical holomorphic newforms of some given weight and level:

\begin{definition}[Newvectors]\label{defn:newvectors-super-classicla}
  Let by $\mathcal{F}_{m..m'} \subseteq
  L^2(\mathbf{Y}_{m..m'})$
  be
  an orthonormal basis
  for the space of functions $\varphi : \mathbf{Y}_{m..m'} \rightarrow
  \mathbb{C}$
  satisfying the following conditions:
  \begin{enumerate}
  \item the pullback of $\varphi$
    to $\mathbf{X} = \Gamma \backslash G$
    generates an irreducible representation
    of $G = \GL_2(\mathbb{Q}_p)$
    under the right translation action.
  \item $\varphi$ is an eigenfunction of the Hecke operator
    $T_\ell$ (see \S\ref{sec:definition-t_n})
    for all primes $\ell \neq p$.
  \item $\varphi$ is orthogonal to pullbacks
    from $\mathbf{Y}_{n..n'}$
    whenever $n..n' \subsetneq m..m'$.
  \end{enumerate}
  It is known that
  $|\mathcal{F}_{m..m'}| \asymp |\mathbf{Y}_{m..m'}|
  \asymp p^{|m-m'|}$
  for $|m - m'|$ sufficiently large.
\end{definition}
To simplify the exposition of \S\ref{sec:overview},
we focus on the symmetric intervals $-N..N$.
Fix $n \in \mathbb{Z}_{\geq 0}$.
Let $N \geq n$ be an integral parameter tending off to $\infty$.
Denote by $\pr : \mathbf{Y}_{-N..N} \twoheadrightarrow
\mathbf{Y}_{-n..n}$
the natural surjection.
For $\varphi \in \mathcal{F}_{-N..N}$,
we may define a probability measure
$\mu_\varphi$ on $\mathbf{Y}_{-n..n}$
by setting
\[
\mu_\varphi(E)
:=
\frac{1}{|\mathbf{Y}_{-N..N}|}
\sum_{\substack{x \in \mathbf{Y}_{-N..N} :
    \\
    \pr(x) \in  E
  }}
|\varphi|^2(x).
\]
For example,
in the instructive special  case $n=0$,
the measures $\mu_\varphi$
live on the base graph $\mathbf{Y}_{0..0} = \mathbf{Y}$
and assign to subsets $E \subseteq \mathbf{Y}$
the number
\[
\mu_\varphi(E)
=
\frac{1}{|\mathbf{Y}_{-N..N}|}
\sum_{
  \substack{
    x = (x_{-N} \rightarrow \dotsb \rightarrow x_{N}) \in
    \mathbf{Y}_{-N..N}:
    \\ 
    x_0 \in E
  }
}
|\varphi|^2(x),
\]
which quantifies
how much mass $\varphi : \mathbf{Y}_{-N..N} \rightarrow \mathbb{C}$
assigns to paths whose central vertex
lies in $E$.
\begin{question}\label{question:weak-limits}
  Fix $n \in \mathbb{Z}_{\geq 0}$.
  Let $N \geq n$ traverse a sequence of positive integers
  tending to $\infty$.
  For each $N$,
  choose an element $\varphi_N \in \mathcal{F}_{-N..N}$.
  What are the
  possible limits
  of the sequence of measures $\mu_{\varphi_N}$
  on the space $\mathbf{Y}_{-n..n}$?
\end{question}
The following conjecture
has not appeared explicitly
in the literature, but
may be regarded nowadays
as a standard
analogue
of
the arithmetic quantum unique ergodicity conjecture of
Rudnick--Sarnak \cite{MR1266075}
(cf. \cite{sarnak-progress-que, PDN-AP-AS-que} and references).
\begin{conjecture}\label{conj:duh-obvious-nowadays-AQUE}
  In the context of Question \ref{question:weak-limits},
  the uniform measure on $\mathbf{Y}_{-n..n}$ is the only
  possible weak limit.
  In other words,
  for any sequence $\varphi_N \in \mathcal{F}_{-N..N}$
  and any $E \subseteq \mathbf{Y}_{-n..n}$,
  \[
  \lim_{N \rightarrow \infty}
  \mu_{\varphi_N}(E)
  = \frac{|E|}{|\mathbf{Y}_{-n..n}|}.
  \]
\end{conjecture}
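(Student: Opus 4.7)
The plan is to adapt Lindenstrauss's proof of arithmetic quantum unique ergodicity for Maass forms on compact arithmetic surfaces, together with the Bourgain--Lindenstrauss refinement giving positive entropy, to the $p$-adic setting. Because the newvector $\varphi_N \in \mathcal{F}_{-N..N}$ is associated to a principal series representation $\pi_N$ of $G = \GL_2(\mathbb{Q}_p)$, one has access to the full representation $\pi_N$, not merely the single vector $\varphi_N$. The strategy is to exchange $\varphi_N$ for a more symmetric choice of vector in $\pi_N$, pass to weak-$*$ limits of the resulting measures on $\mathbf{X} = \Gamma \backslash G$ (rather than on the finite quotient $\mathbf{Y}_{-n..n}$), identify the limit by soft invariance plus rigidity, and then descend back to $\mu_{\varphi_N}$.

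The first step is to construct a $p$-adic microlocal lift $\Phi_N \in \pi_N$ whose matrix coefficient $g \mapsto \langle \pi_N(g)\Phi_N, \Phi_N\rangle$ is invariant (or approximately invariant as $N \to \infty$) under conjugation by the diagonal torus $A \subset G$. Then any weak-$*$ limit $\mu$ of the probability measures $|\Phi_N|^2 \, dg$ on $\mathbf{X}$ is automatically $A$-invariant. The definition should be representation-theoretic: take a suitable vector in the $K$-type decomposition of $\pi_N$ that is supported on a conjugation-invariant set of ``paths'' in the Bruhat--Tits tree. Second, I would combine $\Gamma$-invariance with Hecke-eigenness at every prime $\ell \neq p$ to upgrade $\mu$ to a joint eigenmeasure under $A$ and under the action of $\PGL_2(\mathbb{Q}_\ell)$ on the $S$-arithmetic extension of $\mathbf{X}$, putting $\mu$ in the framework of Einsiedler--Lindenstrauss.

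The third step is the main obstacle: showing that almost every $A$-ergodic component of $\mu$ has positive entropy, so that the Einsiedler--Lindenstrauss measure classification forces $\mu$ to be the Haar measure. Here I would adapt the Bourgain--Lindenstrauss argument by using the Hecke operators $T_\ell$ at auxiliary primes $\ell \neq p$ as amplifiers: concretely, one bounds the $L^2$-mass that $\Phi_N$ can place on small $A$-tubes in $\mathbf{X}$ by expanding in terms of Hecke returns, and translates this into a lower bound for entropy. The delicate point is that, in contrast to the archimedean setting, the microlocal lift lives deep inside the tree (at level $N$), so one must carefully match the $\ell$-adic amplification scale against the $p$-adic scale of the lift; getting uniform multiplicity/diophantine estimates at this interface is where the hardest technical work lies.

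Finally, I would execute the descent: equidistribution of $|\Phi_N|^2 \, dg$ on $\mathbf{X}$ must be transferred to equidistribution of $\mu_{\varphi_N}$ on $\mathbf{Y}_{-n..n}$. This is accomplished by a local Rankin--Selberg type identity on $\GL_2(\mathbb{Q}_p)$ expressing the pairing of $|\varphi_N|^2$ against a test function on $\mathbf{Y}_{-n..n}$ in terms of the pairing of $|\Phi_N|^2$ against a related test function, using the explicit relation between the newvector and the microlocal lift inside $\pi_N$. Verifying this identity, and checking that the resulting change of variable really does convert equidistribution of the lifted measures into Conjecture~\ref{conj:duh-obvious-nowadays-AQUE}, is the remaining step; the principal-series assumption enters here to ensure that the newvector sits in the right position relative to the microlocal lift.
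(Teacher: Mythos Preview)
Your proposal has a genuine gap at the very first step: you write ``Because the newvector $\varphi_N \in \mathcal{F}_{-N..N}$ is associated to a principal series representation $\pi_N$,'' but this is \emph{not} a hypothesis of the conjecture. The conjecture concerns arbitrary sequences $\varphi_N \in \mathcal{F}_{-N..N}$, and a positive proportion of such $\varphi_N$ generate supercuspidal (discrete series) representations rather than principal series. For supercuspidal $\pi_N$, your microlocal-lift construction fails outright: there is no vector $\Phi_N \in \pi_N$ whose limit measure is invariant under the split diagonal torus $A$, for the same reason that Lindenstrauss's argument does not apply to holomorphic forms of large weight. The paper does not prove the conjecture; it proves only the principal series case (Theorem~\ref{thm:super-simplified}), and the supercuspidal case remains open.

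Restricted to the principal series case, your outline is essentially the paper's own argument: construct a $p$-adic microlocal lift with (asymptotic) diagonal invariance, verify Hecke recurrence and positive entropy, invoke the Einsiedler--Lindenstrauss measure classification to force the limit to be Haar, and then descend to the newvector via a local Rankin--Selberg computation. Two minor corrections to your sketch: first, the positive entropy step (your ``main obstacle'') does not interact with the microlocal lift at all---the Bourgain--Lindenstrauss amplification works uniformly for any $L^2$-normalized Hecke eigenfunction in $\mathcal{A}_0(\mathbf{X})$, with no dependence on the $p$-adic depth $N$, so there is no scale-matching issue. Second, what enters the measure-classification hypothesis is $T_\ell$-\emph{recurrence} at a single auxiliary split prime $\ell \neq p$, not a joint eigenmeasure structure at all primes. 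The genuinely delicate step in the paper is rather the descent: the microlocal lift is not a lift of $\mu_{\varphi_N}$ in the naive sense (except against spherical observables for $p\neq 2$), and establishing the ``equidistribution implication'' requires a careful stationary-phase analysis of local Rankin--Selberg integrals, which is the most technical component.
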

Conjecture \ref{conj:duh-obvious-nowadays-AQUE} predicts that
for any sequence $\varphi_N \in \mathcal{F}_{-N..N}$, the
corresponding sequence of $L^2$-masses $\mu_{\varphi_N}$
equidistributes under pushforward to any fixed space
$\mathbf{Y}_{-n..n}$.  One can formulate this conclusion more
concisely in terms of equidistribution on the compact space
$\varprojlim \mathbf{Y}_{-n..n}$ of infinite bidirectional
non-backtracking paths, or equivalently, on the space
$\mathbf{X} = \Gamma \backslash G$.

By explicating
the triple product formula \cite{MR2585578},
one can show that Conjecture
\ref{conj:duh-obvious-nowadays-AQUE}
follows from an open case of the subconvexity conjecture,
which in turn follows from GRH;
the latter can be shown
to imply more precisely that
\begin{equation}\label{eq:lindelof-prediction-booyah}
  \mu_{\varphi_N}(E)
  = \frac{|E|}{|\mathbf{Y}_{-n..n}|}
  + O(p^{-(1+o(1)) N/2})
\end{equation}
for fixed $n$.
There are nowadays
well-developed techniques
(see for instance \cite[\S1.4]{nelson-variance-73-2})
to show
that
\begin{itemize}
\item  the prediction \eqref{eq:lindelof-prediction-booyah}
holds
for $\varphi_N$ outside
a hypothetical exceptional subset of density $o(1)$,
that
\item
  if \eqref{eq:lindelof-prediction-booyah} is true,
  it is essentially optimal, and that
\item
  Conjecture \ref{conj:duh-obvious-nowadays-AQUE}
  holds for $\varphi_N$ outside
  a hypothetical exceptional subset of
  extremely small
  density
  $|\mathcal{F}_{-N..N}|^{-1/2+o(1)} = o(1)$,
\end{itemize}
but the problem of eliminating such exceptions
entirely (in the present setting and related ones)
has proved subtle.

For context, we recall some instances
in which the difficulty indicated above has been overcome;
notation and terminology should be clear by analogy.
\begin{theorem}[Lindenstrauss \cite{MR2195133}]\label{thm:lind}
  Let $\Delta \backslash \mathbb{H}$ be a compact hyperbolic
  surface attached to an order in  a non-split  indefinite quaternion
  algebra.  Let $\varphi$ traverse a sequence of $L^2$-normalized
  Hecke--Laplace eigenfunctions
  on
  $\Delta \backslash \mathbb{H}$
  with Laplace eigenvalue tending
  to $\infty$.  Then the $L^2$-masses
  $\mu_{\varphi}$ equidistribute.
\end{theorem}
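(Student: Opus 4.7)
My plan is to follow the by-now standard three-step route to arithmetic QUE on a hyperbolic surface. Let $\varphi = \varphi_j$ run over the given Hecke--Laplace eigenfunctions on $\Delta \backslash \mathbb{H}$ with Laplace eigenvalue $\lambda_j \to \infty$, and let $\mathbf{Z} := \Delta \backslash \SL_2(\mathbb{R})/\{\pm 1\}$ denote the unit tangent bundle, carrying the geodesic flow $a_t$ and the Hecke correspondences inherited from $\Delta$. The first step is to attach to each $\varphi_j$ a microlocal lift $\widetilde{\mu}_{\varphi_j}$, a probability measure on $\mathbf{Z}$ whose projection to $\Delta \backslash \mathbb{H}$ is asymptotically $|\varphi_j|^2$. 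Following Zelditch and Lindenstrauss, the cyclic $\SL_2(\mathbb{R})$-submodule of $L^2(\Delta \backslash \SL_2(\mathbb{R}))$ generated by $\varphi_j$ is an irreducible principal or complementary series representation, and one takes $\widetilde{\mu}_{\varphi_j} := |\widetilde{\varphi}_j|^2$ for an appropriately chosen unit vector $\widetilde{\varphi}_j$ in that representation (essentially a smoothed version of $\varphi_j$ on the unit tangent bundle). A short computation in the Lie algebra shows that as $\lambda_j \to \infty$ the lifts $\widetilde{\mu}_{\varphi_j}$ become asymptotically $a_t$-invariant, so any weak-$*$ subsequential limit $\mu$ is a probability measure on $\mathbf{Z}$ invariant under the geodesic flow.

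The second, most delicate step is to show that almost every ergodic component of $\mu$ with respect to $a_t$ has positive Kolmogorov--Sinai entropy. The arithmetic structure is essential here: for each prime $\ell$ at which $\Delta$ is unramified, $\varphi_j$ is simultaneously an eigenfunction of the Hecke operator $T_\ell$, which spreads $L^2$-mass over an $(\ell+1)$-regular tree of Hecke neighbors. Following Bourgain--Lindenstrauss, one amplifies this spreading by combining many Hecke correspondences: two distinct Hecke translates of a typical point are almost never anomalously close along the geodesic direction, so a joint Hecke eigenfunction cannot place too much mass on any small Bowen ball. This yields an explicit entropy lower bound that passes to $\mu$ in the weak-$*$ limit.

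The final step is to invoke Lindenstrauss's measure rigidity theorem: any probability measure on $\mathbf{Z}$ that is $a_t$-invariant, is Hecke-recurrent (a property $\mu$ inherits from the Hecke equivariance of the lifts), and whose ergodic components have positive entropy, must be the normalized Haar measure. Pushing $\mu$ back down to $\Delta \backslash \mathbb{H}$ then gives the claimed equidistribution of the $\mu_{\varphi_j}$. The principal obstacle, and the main technical innovation of Lindenstrauss's work, is the positive-entropy step: producing an unconditional, quantitative entropy bound from the Hecke structure alone, with no subconvexity or GRH input. The Bourgain--Lindenstrauss amplification rests on delicate Diophantine estimates for the number of nearby pairs of Hecke-related points on $\mathbf{Z}$, and it is this ingredient that controls the entire argument.
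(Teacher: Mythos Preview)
The paper does not prove Theorem~\ref{thm:lind}; it is quoted from Lindenstrauss~\cite{MR2195133} purely for context and motivation, so there is no ``paper's own proof'' to compare against. Your outline is a faithful sketch of the argument in that reference: the Zelditch--Wolpert microlocal lift giving asymptotic $a_t$-invariance, Hecke recurrence, the Bourgain--Lindenstrauss positive-entropy input, and Lindenstrauss's measure-classification theorem. That is indeed the standard route, and it is also precisely the template the present paper adapts to the $p$-adic setting for its own main results (Theorems~\ref{thm:microlocal-basics}, \ref{thm:main}, and \ref{thm:measure-classification}), with the Einsiedler--Lindenstrauss measure-classification theorem replacing Lindenstrauss's original rigidity result.

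One small clarification: you attribute the positive-entropy step to Lindenstrauss's paper itself and call it ``the main technical innovation of Lindenstrauss's work.'' In the compact quaternionic case treated here, the entropy bound is due to Bourgain--Lindenstrauss~\cite{MR1957735}, which predates~\cite{MR2195133}; Lindenstrauss's decisive contribution in~\cite{MR2195133} is the measure-classification theorem that converts $a_t$-invariance, Hecke recurrence, and positive entropy into Haar. Your sketch has the ingredients right but slightly misassigns the novelty.
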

\begin{theorem}[N, N--Pitale--Saha, Hu \cite{PDN-HQUE-LEVEL, PDN-AP-AS-que, 2014arXiv1409.8173H}]\label{thm:N-NPS}
  Fix a natural number $q_0$.  Let $q$ traverse a
  sequence of natural numbers tending to $\infty$.
  Let
  $\varphi$ be an $L^2$-normalized holomorphic Hecke newform
  on the standard congruence subgroup
  $\Gamma_0(q)$
  of $\SL_2(\mathbb{Z})$.  Then
  the pushforward to $\Gamma_0(q_0) \backslash \mathbb{H}$ of
  the $L^2$-mass of $\varphi$ equidistributes.
\end{theorem}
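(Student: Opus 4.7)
The plan is to reduce the pushforward equidistribution problem to subconvex bounds for automorphic $L$-functions in the level aspect, via the Watson--Ichino triple product formula. Refining the sequence so that $q_0 \mid q$ (a harmless reduction), let $\pi_q : \Gamma_0(q) \backslash \mathbb{H} \to \Gamma_0(q_0) \backslash \mathbb{H}$ denote the natural covering map. By Weyl's criterion, equidistribution of $(\pi_q)_*\bigl(y^k |\varphi|^2 \tfrac{dx\,dy}{y^2}\bigr)$ on $\Gamma_0(q_0) \backslash \mathbb{H}$ is equivalent to showing that for each fixed mean-zero Hecke-eigenfunction $\psi$ on $\Gamma_0(q_0) \backslash \mathbb{H}$ (Maass cusp form, holomorphic cusp form, or incomplete Eisenstein series), the triple integral
\[
I_q(\psi) := \int_{\Gamma_0(q) \backslash \mathbb{H}} y^k |\varphi|^2 \cdot (\psi \circ \pi_q) \, \frac{dx\,dy}{y^2}
\]
tends to $0$ as $q \to \infty$.

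Squaring $I_q(\psi)$ and applying the Watson--Ichino formula expresses $|I_q(\psi)|^2$ as an explicit ratio of central $L$-values and adjoint $L$-values at $s=1$, multiplied by a product of local integrals. Rankin--Selberg factorization yields
\[
L(1/2, \pi_\varphi \otimes \pi_\varphi \otimes \pi_\psi) = L(1/2, \pi_\psi) \, L(1/2, \Ad\,\pi_\varphi \otimes \pi_\psi),
\]
where the degree-six factor on the right has analytic conductor polynomial in $q$. The convexity bound in $q$ is just barely insufficient to force $I_q(\psi) \to 0$, so the argument invokes level-aspect subconvex bounds for $L(1/2, \Ad\,\pi_\varphi \otimes \pi_\psi)$ --- of Michel--Venkatesh and Blomer--Harcos type --- which, combined with the Hoffstein--Lockhart lower bound $L(1,\Ad\,\pi_\varphi) \gg q^{-\varepsilon}$, would yield a power saving $|I_q(\psi)|^2 \ll q^{-\delta}$ for some $\delta > 0$.

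The hard part will be the explicit evaluation of the local triple product integrals at the ramified places $v \mid q$. For the newform $\varphi$, the local component $\pi_{\varphi,v}$ is Steinberg, a ramified principal series, or supercuspidal according to $\ord_v(q)$, and one must compute the local integral against the newvector in the Kirillov or Whittaker model and verify that it attains the predicted polynomial size in $q_v$ rather than degenerating in a manner that swallows the global subconvex saving. This local calculation forms the main technical content of \cite{PDN-HQUE-LEVEL}, \cite{PDN-AP-AS-que}, and \cite{2014arXiv1409.8173H}, which together cover the squarefree, general, and alternative-approach cases. Once non-degeneracy of the local constants is established, combining them with the global subconvex bound completes the proof.
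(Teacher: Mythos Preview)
This theorem is quoted in the paper as a known result, with proof deferred to the cited references; the paper itself contains no proof of it. The only relevant remark the paper makes is that the method underlying the proof is due to Holowinsky--Soundararajan, and relies on parabolic Fourier expansions.

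Your outline has a genuine gap. You reduce to bounding $L(1/2,\Ad\pi_\varphi\otimes\pi_\psi)$ and then ``invoke level-aspect subconvex bounds \ldots\ of Michel--Venkatesh and Blomer--Harcos type.'' But those results concern $\GL_2$ $L$-functions; here $\Ad\pi_\varphi$ is a $\GL_3$ object whose conductor grows with $q$, and subconvexity for $\GL_3\times\GL_2$ in the $\GL_3$-level aspect is not known. If it were, the problem would indeed be immediate --- but that is precisely the open subconvexity problem the paper alludes to (cf.\ the discussion around \eqref{eq:lindelof-prediction-booyah}).

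The actual argument in the cited works adapts the Holowinsky--Soundararajan strategy: one combines Soundararajan's \emph{weak} subconvexity (a $(\log C)^{1-\varepsilon}$ saving over convexity, which is unconditional) with Holowinsky's sieve/shifted-convolution bound on the triple integral directly via Fourier expansion. The first wins when $L(1,\Ad\pi_\varphi)$ is not too small, the second when it is; together they cover all cases. The contribution of the cited papers is then, as you say, the computation of the local triple product integrals at primes dividing $q$, verifying they have the expected size so that the Holowinsky--Soundararajan machinery carries through. So your description of the ``hard part'' is accurate, but the global input you propose to feed into it is unavailable.
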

We may of course specialize Theorem \ref{thm:N-NPS} to powers of a fixed prime:
\begin{theorem}[N, N--Pitale--Saha, Hu \cite{PDN-HQUE-LEVEL, PDN-AP-AS-que, 2014arXiv1409.8173H}]\label{cor:N-NPS}
  Fix a prime $p$ and a nonnegative integer $n_0$.  Let $n$ traverse a
  sequence of natural numbers tending to $\infty$.
  Let
  $\varphi$ be an $L^2$-normalized holomorphic Hecke newform
  on
  $\Gamma_0(p^n)$.  Then
  the pushforward to $\Gamma_0(p^{n_0}) \backslash \mathbb{H}$ of
  the $L^2$-mass of $\varphi$ equidistributes.
\end{theorem}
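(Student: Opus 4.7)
My plan is to deduce this statement as an immediate corollary of Theorem \ref{thm:N-NPS}. Specialize the ambient sequence $q$ of that theorem to the geometric sequence $q = p^n$ (which tends to $\infty$ with $n$) and set $q_0 := p^{n_0}$. Then an $L^2$-normalized holomorphic Hecke newform on $\Gamma_0(p^n)$ is precisely an $L^2$-normalized holomorphic Hecke newform of level $q = p^n$, and the target quotient $\Gamma_0(p^{n_0}) \backslash \mathbb{H}$ coincides with $\Gamma_0(q_0) \backslash \mathbb{H}$. The conclusion of Theorem \ref{thm:N-NPS} therefore yields the desired equidistribution verbatim; there is no additional work to carry out beyond recording this specialization.

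Alternatively, one could envision a standalone argument that takes seriously the $p$-adic nature of the level. The idea would be to transfer the problem, via the Jacquet--Langlands correspondence applied to a definite quaternion algebra split at $p$, to a question about automorphic forms on quotients of the type $\mathbf{X} = \Gamma \backslash \GL_2(\mathbb{Q}_p)$ featured earlier in this introduction, and then invoke a $p$-adic arithmetic quantum unique ergodicity result in the direction of Conjecture \ref{conj:duh-obvious-nowadays-AQUE}. The main obstacle on that route would be matching the archimedean data: holomorphic newforms of fixed weight correspond under Jacquet--Langlands to vectors in a specific finite-dimensional representation of the compact group $B_\infty^\times$, and one must verify that pushforward to $\Gamma_0(p^{n_0}) \backslash \mathbb{H}$ translates into pushforward to the appropriate finite-index quotient of $\mathbf{X}$. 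Since Theorem \ref{thm:N-NPS} already delivers the conclusion in full generality and with no weight restrictions, the direct specialization is both the cleanest and the strongest route, and it is the one I would take.
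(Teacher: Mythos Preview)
Your proposal is correct and matches the paper's approach exactly: the paper introduces this theorem with the sentence ``We may of course specialize Theorem~\ref{thm:N-NPS} to powers of a fixed prime,'' and your first paragraph carries out precisely that specialization. The alternative Jacquet--Langlands route you sketch is not needed here (and, as you note, the direct specialization is cleaner).
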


Conjecture \ref{conj:duh-obvious-nowadays-AQUE}
is in the spirit of Theorem
\ref{cor:N-NPS}, save a crucial distinction to be discussed in
due course
(see Remark \ref{rmk:K0-case-easy}).
Unfortunately,
the method underlying the proof of Theorem
\ref{cor:N-NPS}, due to Holowinsky--Soundararajan \cite{MR2680499},
is fundamentally inapplicable to Conjecture
\ref{conj:duh-obvious-nowadays-AQUE}
due to its reliance on parabolic Fourier expansions,
which are unavailable on the compact quotient $\mathbf{X}$.
We will instead develop
here
a method more closely aligned with
that underlying the proof of Theorem \ref{thm:lind}.

To describe our result, we must recall that the elements of
$\mathcal{F}_{-N..N}$ may be partitioned according to the
isomorphism class of the representation of
$G = \GL_2(\mathbb{Q}_p)$ that they generate.  For $N \geq 1$,
any such representation is either
\begin{enumerate}
\item a (ramified) principal series representation (see
  \S\ref{sec:principal-series-reps}),
  or
\item a (supercuspidal) discrete series representation.
\end{enumerate}
A (computable) positive proportion of elements of
$\mathcal{F}_{-N..N}$ belongs to either category.  The dichotomy
here is analogous to that on
$\SL_2(\mathbb{Z}) \backslash \SL_2(\mathbb{R})$ between Maass forms
(principal series) and holomorphic forms (discrete series).
\begin{theorem}[Main result]\label{thm:super-simplified}
  The conclusion of Conjecture \ref{conj:duh-obvious-nowadays-AQUE}
  holds if $\varphi_N$ belongs to the principal series.
\end{theorem}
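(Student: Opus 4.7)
The plan is to adapt Lindenstrauss's strategy (Theorem \ref{thm:lind}) to the $p$-adic setting, working directly on $\mathbf{X} = \Gamma \backslash G$ rather than on the finite quotients $\mathbf{Y}_{-n..n}$. Since the family of probability measures on $\mathbf{X}$ is weak-$\ast$ sequentially compact, and since pushforward under $\pr$ is continuous, it suffices to show that \emph{every} weak-$\ast$ subsequential limit $\mu$, on $\mathbf{X}$, of the sequence $\mu_{\varphi_N}$ (suitably lifted from $\mathbf{Y}_{-N..N}$) is the Haar probability measure. The output on $\mathbf{Y}_{-n..n}$ then follows immediately by pushforward.

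\emph{First step: $p$-adic microlocal lifts.} For each principal series eigenfunction $\varphi_N$, generating an irreducible representation $\pi_N$ of $G$, I would select a distinguished vector $\tilde{\varphi}_N \in \pi_N$ (realized in $L^2(\mathbf{X})$) with the following features: the pushforward of the probability measure $|\tilde{\varphi}_N|^2 \, dg$ on $\mathbf{X}$ to $\mathbf{Y}_{-N..N}$ agrees with $\mu_{\varphi_N}$ up to errors that vanish weakly as $N \to \infty$; and $\tilde{\varphi}_N$ is an approximate eigenvector of the diagonal split torus $A \subset G$, with the approximation strong enough that any weak-$\ast$ limit $\mu$ of $|\tilde{\varphi}_N|^2 \, dg$ is exactly $A$-invariant. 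Concretely, one chooses $\tilde{\varphi}_N$ inside a model (Kirillov or induced) of $\pi_N$ as a function concentrated near a fixed point of $A$ on the flag variety, so that translation by $a \in A$ moves $\tilde{\varphi}_N$ only through a character times a small tail; this is the $p$-adic analogue of Zelditch's archimedean microlocal lift. Verifying that this matches $\mu_{\varphi_N}$ on $\mathbf{Y}_{-N..N}$ up to $o(1)$ is a careful matrix-coefficient computation relating the newvector to the microlocal lift in the same representation.

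\emph{Second step: measure classification.} The sequence $\varphi_N$ is Hecke-equivariant at every prime $\ell \neq p$, so any weak-$\ast$ limit measure $\mu$ inherits recurrence properties under all $T_\ell$. Combined with $A$-invariance from the previous step, the measure-rigidity theorem of Einsiedler--Lindenstrauss cited in the abstract reduces the entire theorem to showing that $\mu$ has \emph{positive entropy} with respect to the $A$-action; under that hypothesis the classification forces $\mu$ to be Haar.

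\emph{Third step: positive entropy via Bourgain--Lindenstrauss.} This is the hard part. One must rule out concentration of $\mu$ on $A$-orbit-shaped tubes: for every sufficiently small $\varepsilon > 0$, one needs $\mu(B) \ll \vol(B)^{\kappa}$ with $\kappa > 0$ for every tube $B$ of width $\varepsilon$ around a length-$1$ piece of $A$-orbit. Following the arithmetic combinatorics at auxiliary primes $\ell$ used by Bourgain--Lindenstrauss, one bounds the number of ``near-diagonal'' Hecke returns of $\tilde{\varphi}_N$ to $B$ by evaluating local Rankin--Selberg type integrals $\int_{\mathbf{X}} |\tilde{\varphi}_N|^2 \cdot K_{\ell} \, dg$, where $K_{\ell}$ is a geometric kernel built from $T_{\ell^k}$-orbits near the identity in $\PGL_2(\mathbb{Q}_\ell)$. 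Using the Plancherel decomposition of $K_{\ell}$ and the fact that $\tilde{\varphi}_N$ lies in a single global automorphic representation $\pi_N$, these integrals reduce to products of local Rankin--Selberg zeta integrals; at the prime $p$ one computes the integral of the microlocal lift against a newvector at $\ell$.

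The main obstacle I anticipate is the \emph{local Rankin--Selberg calculation at $p$}: the microlocal lift is not a newvector and is not $K$-finite of any simple type, so the classical Casselman--Shalika / Bump formulas for newvector Rankin--Selberg integrals do not apply. One must therefore work out by hand the zeta integral of a microlocal lift against itself twisted by a newvector of an auxiliary principal series, balancing explicitness (to obtain clean bounds) against having enough approximate $A$-invariance for the first step. Once this local analysis gives bounds of the same shape that Bourgain--Lindenstrauss obtain in the archimedean case, the entropy conclusion and then the Einsiedler--Lindenstrauss rigidity theorem yield $\mu = $ Haar, completing the proof.
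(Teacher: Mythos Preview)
Your overall strategy coincides with the paper's: introduce $p$-adic microlocal lifts $\tilde\varphi_N\in\pi_N$, prove that their $L^2$-masses equidistribute on $\mathbf{X}$ via the Einsiedler--Lindenstrauss measure classification (diagonal invariance, Hecke recurrence, positive entropy \`a la Bourgain--Lindenstrauss), and then transfer equidistribution back to the newvectors $\varphi_N$.

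Two misplacements are worth flagging. First, the lifting relationship you posit in Step~1 --- that $|\tilde\varphi_N|^2\,dg$ and $\mu_{\varphi_N}$ agree up to $o(1)$ --- is in general \emph{false} beyond spherical observables with $p\neq 2$; see Remark~\ref{rmk:lifting-behavior}. What the paper actually proves is a one-directional \emph{equidistribution implication}: for $\Psi$ in a fixed generic $\sigma$, one writes both $\mu_{\tilde\varphi_N}(\Psi)$ and $\mu_{\varphi_N}(\Psi)$ as $\mathcal{L}^{1/2}$ times a local Rankin--Selberg integral $\ell_{\RS}$, using Prasad's uniqueness of invariant trilinear forms. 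An asymptotic \emph{evaluation} of $\ell_{\RS}$ for the microlocal lift (Theorem~\ref{thm:local-rs}(I)) shows that $\mu_{\tilde\varphi_N}(\Psi)\to 0$ forces $\mathcal{L}^{1/2}=o(p^{N/2})$; a separate uniform \emph{upper bound} on $\ell_{\RS}$ for the newvector (Theorem~\ref{thm:local-rs}(II)) then gives $\mu_{\varphi_N}(\Psi)=o(1)$. So the hard local Rankin--Selberg analysis at $p$ that you correctly anticipate goes \emph{here}, in the transfer step, not in the entropy step. (Incidentally, diagonal invariance of limit measures is also deduced from part~(I) and Prasad's theorem rather than from an ``approximate eigenvector'' argument.)

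Second, the positive entropy bound needs no Rankin--Selberg input at $p$ at all. It is a direct Bourgain--Lindenstrauss amplification at auxiliary primes $\ell\neq p$: one bounds $\mu_\varphi(xB(C,\varepsilon))$ by a geometric count of Hecke returns $M_n\cap z(m)xB(C,\varepsilon)x^{-1}$ (any two such elements are shown to commute, using discreteness of $\mathbb{Z}[1/p]$ in $\mathbb{R}\times\mathbb{Q}_p$, hence lie in an imaginary quadratic order), combined with a standard amplifier. This step works for any Hecke eigenfunction in $A_0(\mathbf{X})$ and is agnostic to the microlocal structure at $p$.
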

Theorem \ref{thm:super-simplified} represents the first genuine
instance of arithmetic quantum unique ergodicity in the level
aspect on a compact arithmetic quotient and also the first
on any $p$-adic arithmetic quotient.  It says that for
a sequence $\varphi_N \in \mathcal{F}_{-N..N}$ belonging to the
principal series, the corresponding $L^2$-masses equidistribute
under pushforward to any fixed space $\mathbf{Y}_{-n..n}$.  

\begin{remark}\label{rmk:fix-split-l-que-graph}
  Our
result might be described concisely as \emph{arithmetic quantum
  unique ergodicity on the path space over the fixed regular
  graph $(\mathbf{Y},T_p)$} and as contributing to the growing literature
concerning quantum chaos on regular graphs (see \cite{MR2677974,
  MR3038543, MR3322309} and references).
Alternatively, one could fix an auxiliary
split prime $\ell \neq p$, regard $(\mathbf{Y}_{-N..N}, T_\ell)$
as traversing an inverse system of $(\ell + 1)$-regular graphs,
and interpret Theorem \ref{thm:equid-balanced-newvectors} as a
form of arithmetic quantum unique ergodicity for such a
sequence of graphs.
\end{remark}

\begin{remark}
  Assuming the multiplicity
  hypothesis
  that
  an
  element $\varphi \in \mathcal{F}_{-N..N}$
  generating an irreducible principal series
  representation of $G$ is automatically
  an eigenfunction of the $T_\ell$ for $\ell \neq p$
  (which is inspired by analogy from the conjectural
  simplicity of the spectrum  of the Laplacian on
  $\SL_2(\mathbb{Z}) \backslash \mathbb{H}$),
  Theorem
  \ref{thm:equid-balanced-newvectors} may be understood as
  telling us something new about individual finite graphs
  $(\mathbf{Y},T_p)$, such as those pictured above, together with
  their realization as $\Gamma \backslash G / K$.
\end{remark}

As indicated already,
the proof of
Theorem \ref{thm:super-simplified}
is patterned on that of Theorem \ref{thm:lind}.
An important ingredient in the
proof of Theorem \ref{thm:lind} is the existence of a measure
$\mu$ on $\Delta \backslash \SL_2(\mathbb{R})$, called a
\emph{microlocal lift}, with the properties:
\begin{itemize}
\item $\mu$ lifts the measure
  $\lim_{j \rightarrow \infty} \mu_{\varphi_j}$ on
  $\Delta \backslash \mathbb{H}$.
\item $\mu$ is
  invariant under right translation by the diagonal subgroup of
  $\SL_2(\mathbb{R})$.
\item $(\mu_{\varphi_j})_j \mapsto \mu$
  is compatible with the Hecke operators
  (see \cite[Thm
  1.6]{MR2346281} for details);
  this third property
  is
  that which is not obviously satisfied
  by the classical construction via charts and pseudodifferential calculus.
\end{itemize}
The known construction of $\mu$ with such properties, due to
Zelditch and Wolpert (see \cite{MR916129, MR1814849,
  MR1859345})
and generalized by Silberman--Venkatesh \cite{MR2346281},
relies heavily upon explicit calculation with
raising and lowering operators in the Lie algebra of
$\SL_2(\mathbb{R})$, which have no obvious $p$-adic analogue.
One point of this paper is to introduce such an analogue and to
investigate systematically its relationship
to the classical theory of local newvectors.
 (The
restriction to principal series in Theorem
\ref{thm:super-simplified} then arises for the same reason that
Lindenstrauss's argument does not apply to holomorphic forms of
large weight: the absence of a ``microlocal lift'' invariant by
a split torus.)  The resulting construction may be of
independent interest; for instance, it should have applications
to the test vector problem (see \S\ref{sec:estim-l-funct} and
Remark \ref{remark:mv-epic}).

A curious subtlety of the argument, to be detailed further in
Remark \ref{rmk:lifting-behavior}, is that the ``lift'' we
construct is not a lift in the traditional sense (except against
spherical observables, and even then only for $p \neq 2$).  It
instead satisfies a weaker ``equidistribution implication''
property which suffices for us.  This subtlety is responsible
for the most technical component of the argument
(\S\ref{sec-stationary-phase-local-rs}).

In the remainder of \S\ref{sec:intro} we formulate our main
result in a slightly more general setup
(\S\ref{sec:main-results-general}), introduce a key
tool (\S\ref{sec:p-adic-microlocal}), give an overview of the
proof (\S\ref{sec:equid-micr-lifts}),
interpret our results in terms of $L$-functions
(\S\ref{sec:estim-l-funct}),
and record some further remarks and open questions
(\S\ref{sec:further-remarks}).

\subsection{Main results; general form}\label{sec:main-results-general}
In this section we formulate a generalization of Theorem
\ref{thm:lind} in representation-theoretic language, which
we adopt for the remainder of the paper.

\begin{definition}\label{defn:group-theoretic-paths}
Define
the compact open subgroup
\begin{equation}\label{eq:defn-K-m-mp}
  K_{m..m'}
  :=
  \begin{bmatrix}
    \mathfrak{o}  & \mathfrak{p}^{-m} \\
    \mathfrak{p}^{m'} & \mathfrak{o} 
  \end{bmatrix}^\times,
  \quad
  \mathfrak{o} := \mathbb{Z}_p, \mathfrak{p} := p \mathbb{Z}_p
\end{equation}
of $G$.
Each such subgroup is conjugate
to $K_{0..n}$ for $n = m' - m \geq 0$,
which is in turn analogous to the congruence
subgroup $\Gamma_0(p^n)$ of $\SL_2(\mathbb{Z})$.
Assuming \eqref{eq:torsion-free-assumption-yay},
one has compatible bijections
\[
\mathbf{X}/K_{m..m'}
= \Gamma \backslash G / K_{m..m'}
\xrightarrow{\cong }
\mathbf{Y}_{m..m'}
\]
\[
\Gamma g K_{m..m'}
\mapsto
(x_m \rightarrow x_{m+1} \rightarrow \dotsb \rightarrow x_{m'})
\text{ where }
x_j :=
\Gamma g
\begin{pmatrix}
  p^{-j} &  \\
  & 1
\end{pmatrix}
K
\]
with
$\mathbf{Y}_{m..m'}$
as defined in \eqref{eq:example-path-m-mp}.
\end{definition}
\begin{definition}\label{defn:eigenfunctions-measures-etc}
  The space $\mathcal{A}(\mathbf{X})$ of \emph{smooth} functions
  on $\mathbf{X}$ consists of all functions
  $\varphi : \mathbf{X} \rightarrow \mathbb{C}$ that are
  right-invariant under some open subgroup of $G$.  An
  \emph{eigenfunction} on $\mathbf{X}$ is an element
  $\varphi \in \mathcal{A}(\mathbf{X})$ that is a
  $T_\ell$-eigenfunction for each $\ell$ and that generates an
  irreducible representation of $G$ under the right translation
  action $g \varphi(x) := \rho_{\reg}(g) \varphi(x) := \varphi(x g)$.  The
  \emph{uniform measure} on $\mathbf{X}$, denoted simply
  $\int_{\mathbf{X}}$, is the probability Haar coming from the
  $G$-action.  An element $\varphi \in \mathcal{A}(\mathbf{X})$
  is \emph{$L^2$-normalized} if
  $\int_{\mathbf{X}} |\varphi|^2= 1$.  In that case, the
  \emph{$L^2$-mass} of $\varphi$ is the probability measure
  $\mu_\varphi$ on $\mathbf{X}$ given by
  $\mu_\varphi(\Psi) := \int_{\mathbf{X}} \Psi |\varphi|^2$.
  Convergence of measures always refers to the weak sense, i.e.,
  $\lim_{n \rightarrow \infty} \mu_n = \mu$ if for each fixed
  $\Psi \in \mathcal{A}(\mathbf{X})$,
  $\lim_{n \rightarrow \infty} \mu_n(\Psi) = \mu(\Psi)$.  A
  sequence of measures \emph{equidistributes} if it converges to
  the uniform measure.
\end{definition}

\begin{definition}\label{defn:spaces-aut-forms}
  We denote by
  $\mathcal{H} \subseteq \End(\mathcal{A}(\mathbf{X}))$ the ring
  generated by $\rho(G)$ and the $T_\ell$,
  so that an eigenfunction in the sense
  of Definition \ref{defn:eigenfunctions-measures-etc}
  is an element of $\mathcal{A}(\mathbf{X})$
  that generates an irreducible $\mathcal{H}$-submodule.
  We denote by $A(\mathbf{X})$
  the set of irreducible $\mathcal{H}$-submodules of
  $\mathcal{A}(\mathbf{X})$, by
  $A_0(\mathbf{X}) \subseteq A(\mathbf{X})$ the subset
  consisting of those that are not one-dimensional, and by
  $\mathcal{A}_0(\mathbf{X}) \subseteq \mathcal{A}(\mathbf{X})$
  the sum of the elements of $A_0(\mathbf{X})$,
  or equivalently, the orthogonal complement of the
  one-dimensional
  irreducible submodules.
\end{definition}  
A theorem of Eichler/Jacquet--Langlands implies that each
$\pi \in A(\mathbf{X})$ occurs in $\mathcal{A}(\mathbf{X})$ with
multiplicity one, so that
$\mathcal{A}(\mathbf{X}) = \oplus_{\pi \in A(\mathbf{X})} \pi$
and
$\mathcal{A}_0(\mathbf{X}) = \oplus_{\pi \in A_0(\mathbf{X})} \pi$.
The one-dimensional elements of $A(\mathbf{X})$ are given by
$\mathbb{C} (\chi \circ \det)$ for each character $\chi$ of the
compact group $\mathbb{Q}_p^\times/\det(\Gamma)$, thus
$A(\mathbf{X}) = \{\mathbb{C} (\chi \circ \det)\} \bigsqcup
A_0(\mathbf{X})$.
\begin{definition}\label{defn:central-char-conductor}
  Let $\chi_\pi : \mathbb{Q}_p^\times \rightarrow
  \mathbb{C}^\times$
  denote the central character of $\pi$.
  For $\pi \in A_0(\mathbf{X})$,
  the \emph{conductor} of $\pi$ has the form $C(\pi) = p^{c(\pi)}$,
  where $c(\pi)$
  is the
  smallest nonnegative integer
  with the property that
  $\pi$ contains a nonzero vector
  $\varphi$ satisfying
  $g \varphi = \chi_\pi(d) g$
  for all $g = \left(
    \begin{smallmatrix}
      \ast& \ast\\
      \ast & d
    \end{smallmatrix}
  \right) \in K_{0..c(\pi)}$ \cite{MR0337789, Sch02}.
\end{definition}
\begin{definition}\label{defn:newvectors-original}
  Let $\pi \in A_0(\mathbf{X})$.
  For integers $m,m'$,
  a vector $\varphi \in \pi$ will
  be called
  a \emph{newvector of support} $m..m'$
  if $m' - m = c(\pi)$
  and
  $g \varphi = \chi_\pi(d) \varphi$
  for all $g = \left(
    \begin{smallmatrix}
      \ast& \ast\\
      \ast & d
    \end{smallmatrix}
  \right) \in K_{m..m'}$.
  Local newvector theory \cite{MR0337789, Sch02} implies that the
  space of such vectors is one-dimensional,
  so if $\varphi$ is $L^2$-normalized,
  then the $L^2$-mass $\mu_\varphi$
  depends only upon $\pi$ and
  $m..m'$, not $\varphi$.
  A vector $\varphi \in \pi$
  will be called a \emph{generalized newvector}
  if it is a newvector of support $m..m'$ for some $m,m'$.
  (We include the adjective ``generalized''
    only to indicate explicitly
    that we are not necessarily referring
    to the traditional case $m..m'=0..c(\pi)$,
    which will play no distinguished role here.)
\end{definition}

\begin{remark}
  The newvectors of support $m..m'$ that generate
  representations with unramified (equivalently, trivial) central character
  may be characterized more simply
  as those eigenfunctions $\varphi \in \mathcal{A}(\mathbf{X})$
  (in the sense of Definition \ref{defn:eigenfunctions-measures-etc})
  which
  \begin{enumerate}
  \item are $K_{m..m'}$-invariant, or equivalently, descend to
    $\varphi : \mathbf{Y}_{m..m'} \rightarrow \mathbb{C}$, and
  \item are orthogonal to pullbacks from $\mathbf{Y}_{n..n'}$
    whenever $n..n' \subsetneq m..m'$.
  \end{enumerate}
  Under the torsion-freeness assumption
  \eqref{eq:torsion-free-assumption-yay},
  ``orthogonal'' can be taken
  to mean with respect
  to the  normalized counting measure
  on $\mathbf{Y}_{m..m'}$;
  in general,
  one should take that
  induced by the uniform measure on $\mathbf{X}$.
  In this sense,
  Definition \ref{defn:newvectors-original}
  is consistent with
  Definition \ref{defn:newvectors-super-classicla}.
\end{remark}

\begin{definition}
  We say that
$\pi \in A_0(\mathbf{X})$ \emph{belongs to the principal
  series} if the corresponding representation of $G$ does (see
\S\ref{sec:principal-series-reps}).
\end{definition}

\begin{theorem}[Equidistribution of newvectors, II]\label{thm:equid-balanced-newvectors}
  Let $\pi \in A_0(\mathbf{X})$ traverse  a sequence
  with $C(\overline{\pi} \times \pi) \rightarrow \infty$.
  Assume that $\pi$ belongs to the principal series.
  Let $\varphi \in \pi$ be an $L^2$-normalized generalized newvector.
  Then $\mu_\varphi$ equidistributes.
\end{theorem}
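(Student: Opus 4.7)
The plan is to adapt Lindenstrauss's proof of Theorem \ref{thm:lind} to the $p$-adic setting, with the archimedean microlocal lift replaced by the representation-theoretic $p$-adic microlocal lift announced in \S\ref{sec:overview}. I would argue by contradiction: assuming $\mu_{\varphi_N}$ fails to equidistribute, I would pass to a weak-$*$ subsequential limit $\mu_0$ on $\mathbf{X}$ and use it to produce a pathological $A$-invariant probability measure on $\mathbf{X}$, where $A$ denotes the split diagonal torus of $G$.

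The first step is to attach to each generalized newvector $\varphi \in \pi$ a lifted measure $\widetilde{\mu}_\varphi$ on $\mathbf{X}$ arising from a distinguished vector in $\pi$ that is invariant, or covariant by a character, under $A$. The existence of such a vector uses the assumption that $\pi$ belongs to the principal series, in parallel with the absence of such a vector for holomorphic forms in Lindenstrauss's setting. By compactness, some subsequence of $\widetilde{\mu}_{\varphi_N}$ converges weakly to a probability measure $\widetilde{\mu}$ on $\mathbf{X}$, and the construction forces $\widetilde{\mu}$ to be $A$-invariant. A separate technical input is the ``equidistribution implication'' property, namely that equidistribution of $\widetilde{\mu}$ forces equidistribution of $\mu_0$ on each $\mathbf{Y}_{-n..n}$; as emphasized in the introduction, this is not literal lifting, and I would prove it via a local Rankin-Selberg integral calculation relating the microlocal lift to the classical newvector (along the lines flagged in \S\ref{sec-stationary-phase-local-rs}).

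The next step is to exploit the auxiliary Hecke structure. Since each $\varphi_N$ is a joint eigenfunction of the $T_\ell$ for $\ell \neq p$, and the microlocal lift construction respects these operators, the limit $\widetilde{\mu}$ inherits recurrence under $T_\ell$ for a suitably chosen split auxiliary prime $\ell$. The main obstacle of the argument is to then establish that $\widetilde{\mu}$ has positive entropy under the $A$-action on $\mathbf{X}$. This is the $p$-adic analogue of the Bourgain-Lindenstrauss entropy bound, and I would carry it out by an amplification argument: using the Hecke eigenvalue relations at $\ell$ together with a diophantine count of nearly-returning elements of $\Gamma$, one rules out concentration of the microlocal mass on small tubes transverse to the $A$-orbits.

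Once an $A$-invariant, $T_\ell$-recurrent probability measure $\widetilde{\mu}$ of positive entropy on $\mathbf{X}$ is in hand, I would invoke the measure classification theorem of Einsiedler-Lindenstrauss cited in the abstract to conclude that $\widetilde{\mu}$ is uniform Haar measure. The equidistribution implication property then forces $\mu_0$ to be uniform on each $\mathbf{Y}_{-n..n}$, contradicting the choice of subsequence and completing the proof. I expect the positive-entropy step to be the main difficulty, both because Bourgain-Lindenstrauss-style bounds are genuinely delicate and because the ``non-literal'' nature of the lift means one must keep careful track, via the Rankin-Selberg analysis, of the discrepancy between $\widetilde{\mu}_\varphi$ and a true lift of $\mu_\varphi$.
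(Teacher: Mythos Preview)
Your proposal is correct and follows essentially the same route as the paper: construct the $p$-adic microlocal lift, show any weak limit of its $L^2$-masses is $A$-invariant, $T_\ell$-recurrent, and has positive entropy (via a Bourgain--Lindenstrauss amplification), apply Einsiedler--Lindenstrauss measure classification, and then transfer back to newvectors via the ``equidistribution implication'' proved by local Rankin--Selberg analysis and uniqueness of trilinear forms. The only differences are organizational: the paper separates out the equidistribution of microlocal lifts as a standalone theorem rather than arguing by contradiction, and it identifies the local Rankin--Selberg upper bound for newvectors (your ``equidistribution implication'' step) rather than the entropy bound as the most technical ingredient.
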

Theorem \ref{thm:equid-balanced-newvectors}
specializes to
Theorem \ref{thm:super-simplified}
upon requiring that $\chi_\pi$ be unramified (equivalently,
trivial)
and restricting
to newvectors of support $m..m' = -N..N$ for some $N$.

\begin{remark}
  Unlike earlier works such as \cite{PDN-HQUE-LEVEL,
    PDN-AP-AS-que, 2014arXiv1409.8173H}, we have allowed
  arbitrary central characters in Theorem
  \ref{thm:equid-balanced-newvectors}.  We
  note that the case of the argument in which the conductor of
  the central character is as large as possible relative to that
  of the representation is a bit more technically challenging than the
  others; see \eqref{eq:I3-defn-integral} and following.
\end{remark}

\begin{remark}\label{rmk:K0-case-easy}
  Cases of Theorem \ref{thm:equid-balanced-newvectors} in which
  $m..m'$ is highly unbalanced, such as the most traditional
  case $m..m' = 0..n$ analogous to Theorem \ref{cor:N-NPS},
  are easier: they follow, sometimes with a power savings, from
  the triple product formula, the convexity bound for triple
  product $L$-functions, and nontrivial local estimates as in
  \cite{PDN-AP-AS-que, 2014arXiv1409.8173H}.  Cases in which $m..m'$
  is balanced, such as the case $m..m' = -N..N$ illustrated
  in \S\ref{sec:overview}, do not follow from such local arguments and require the
  new ideas introduced here.  This phenomenon is comparable to
  how the mass equidistribution on a hyperbolic surface
  $\Delta \backslash \mathbb{H}$ of a weight $k$ vector in a
  principal series
  $\pi \hookrightarrow L^2(\Delta \backslash \SL_2(\mathbb{R}))$
  of parameter $t \rightarrow \infty$ follows from essentially
  local means for $t/k = o(1)$ but not for $k = 0$, or even for
  $k \ll t$; see \cite{MR1183602, MR1859598} for some discussion
  along such lines.
  See also Remark
  \ref{rmk:failure-converse-weakly-subconvex-newvectors-que}
  and footnote \ref{footnote:weak-sc}.
\end{remark}

\subsection{$p$-adic microlocal
  lifts}\label{sec:p-adic-microlocal}
We turn to the key definitions
that power the proof
of the above results.
We develop them slightly
more precisely and algebraically
than is strictly necessary for the consequences indicated above.

Let $k$ be a non-archimedean local field with ring of integers
$\mathfrak{o}$, maximal ideal $\mathfrak{p}$, normalized
valuation
$\nu : k \twoheadrightarrow \mathbb{Z} \cup \{+\infty\}$, and
$q := \# \mathfrak{o} / \mathfrak{p}$.  (The case
$(k,\mathfrak{o},\mathfrak{p},q) = (\mathbb{Q}_p,\mathbb{Z}_p, p
\mathbb{Z}_p, p)$ is relevant for the above application.)

To a
generic irreducible representation $\pi$ of $\GL_n(k)$ one may
attach a \emph{conductor} $C(\pi) = q^{c(\pi)}$, with
$c(\pi) \in \mathbb{Z}_{\geq 0}$.  One also defines
$c(\omega)$ for each character $\omega$ of $\mathfrak{o}^\times$;
it is the smallest integer $n$ for which $\omega$
has trivial restriction to $\mathfrak{o}^\times \cap 1 + \mathfrak{p}^n$.

For context,
we
record the local form of
Definition \ref{defn:newvectors-original}:
\begin{definition}[Newvectors]
  \label{defn:newvectors-general}
  A vector $v$ in an irreducible generic representation
  $\pi$ of $\GL_2(k)$
  is a \emph{newvector of support $m..m'$}
  if $m' - m = c(\pi)$
  and
  \[
    \pi(g) v
    = \chi_\pi(d) v
    \text{ for all }
    g = \begin{pmatrix}
      a & b \\
      c & d
    \end{pmatrix}
    \in \GL_2(\mathfrak{o})
    \cap \begin{bmatrix}
      \mathfrak{o}  & \mathfrak{p}^{-m} \\
      \mathfrak{p}^{m'} & \mathfrak{o} 
    \end{bmatrix}.
  \]
  A \emph{generalized newvector} is a newvector of some support.
\end{definition}

Fix now for each nonnegative
integer $N$ a partition $N = N_1 + N_2$ into nonnegative integers $N_1,N_2$
with the property that $N_1, N_2 \rightarrow \infty$ as $N
\rightarrow \infty$.
The precise choice is unimportant;
one might take $N_1 := \lfloor N/2 \rfloor, N_2 := \lceil N/2
\rceil$
for concreteness.
Using this choice,
we introduce the following class of vectors:
\begin{definition}[Microlocal lifts]\label{defn:microlocal-lifts}
  Let $\pi$ be a $\GL_2(k)$-module.  A vector $v \in \pi$ shall
  be called a \emph{microlocal lift} if it generates an
  generic irreducible admissible representation of $\GL_2(k)$
  and if
  there is a positive integer $N$ and characters
  $\omega_1, \omega_2$ of $\mathfrak{o}^\times$
  so that
  $c(\omega_1/\omega_2) = N$
  and
  \[
    \pi(g)
    v = \omega_1(a) \omega_2(\det(g)/a) v
    \text{ for all }
    g = \begin{pmatrix}
      a & b \\
      c & d
    \end{pmatrix}
    \in \GL_2(\mathfrak{o})
    \cap \begin{bmatrix}
      \mathfrak{o}  & \mathfrak{p}^{N_1} \\
      \mathfrak{p}^{N_2} & \mathfrak{o} 
    \end{bmatrix}.
  \]
  In that
  case, we refer to $N$ as the \emph{level} and
  $(\omega_1,\omega_2)$ as the \emph{orientation} of
  $v$.
\end{definition}

The observation
that
the special case $\omega_1 = 1$ of Definition
\ref{defn:microlocal-lifts}
is similar to Definition \ref{defn:newvectors-general}
leads easily to the following characterization of
microlocal lifts as twists of generalized
newvectors from ``extremal principal series'' representations
``$1 \boxplus \chi$'' (see
\S\ref{sec:proof-determination-microlocal} for the proof):
\begin{lemma}\label{thm:compute-microlocal-lifts}
  An irreducible admissible representation $\pi$ of $\GL_2(k)$
  contains a nonzero microlocal lift if only if
  $\pi$ is an irreducible principal
  series representation $\pi \cong \chi_1 \boxplus \chi_2$
  for which
  $N := c(\overline{\pi}
  \otimes \pi)/2  = c(\chi_1/\chi_2)$ is nonzero.
  In that case,
  the set of microlocal lifts is the union of two distinct
  lines.
  One line consists of microlocal lifts of level $N$ and
  orientation $(\omega_1,\omega_2)$,
  where
  $\omega_i := \chi_i|_{\mathfrak{o}^\times}$;
  explicitly, it is the inverse image
  under the non-equivariant twisting isomorphism
  $\pi \rightarrow \pi \otimes \chi_1^{-1} \cong 1 \boxplus \chi_1^{-1} \chi_2$
  of the space of newvectors of support $-N_1..N_2$.
  The other line is described similarly
  with the roles of $\omega_1$ and $\omega_2$ reversed.
\end{lemma}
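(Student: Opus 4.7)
The plan is to identify microlocal lifts with newvectors in an appropriately twisted representation, and then to invoke local newvector theory of Casselman and Jacquet--Piatetski-Shapiro--Shalika \cite{MR0337789, Sch02}. Fix an arbitrary extension of $\omega_1$ to a character $\chi_1$ of $k^\times$ (say, one trivial on a uniformizer), and set $\pi' := \pi \otimes (\chi_1^{-1} \circ \det)$, realized on the same underlying space as $\pi$. All statements about microlocal lifts in $\pi$ with orientation $(\omega_1,\omega_2)$ will be translated via this twist into statements about newvectors in $\pi'$, and the identification will be reversible.

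The first step is a transparent calculation on the Iwahori-type subgroup $H := \GL_2(\mathfrak{o}) \cap K_{-N_1..N_2}$ appearing in Definition \ref{defn:microlocal-lifts}. For $g = \begin{pmatrix} a & b \\ c & d \end{pmatrix} \in H$, one has $a, d \in \mathfrak{o}^\times$, $b \in \mathfrak{p}^{N_1}$, $c \in \mathfrak{p}^{N_2}$, and hence $bc \in \mathfrak{p}^N$; consequently $\det(g)/a \equiv d \pmod{\mathfrak{p}^N}$. Since $c(\omega_1/\omega_2) = N$, each of $\omega_1, \omega_2$ factors through $\mathfrak{o}^\times/(1 + \mathfrak{p}^N)$. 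Multiplying the defining transformation law $\pi(g)v = \omega_1(a)\omega_2(\det(g)/a) v$ by $\chi_1^{-1}(\det g) = \omega_1^{-1}(\det g) = \omega_1^{-1}(a)\omega_1^{-1}(d)$ modulo $\mathfrak{p}^N$ and simplifying yields
\[
\pi'(g)\, v = (\omega_2/\omega_1)(d)\, v \quad \text{for all } g \in H,
\]
which is precisely the defining condition for $v$ to be a newvector of support $-N_1..N_2$ in $\pi'$, provided $c(\pi') = N$. Specializing to scalar matrices shows $\chi_{\pi'}|_{\mathfrak{o}^\times} = \omega_2/\omega_1$, so in particular $c(\chi_{\pi'}) = N$.

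Local newvector theory now asserts that the space of such newvectors is one-dimensional whenever $c(\pi') = N$, and empty otherwise; the existence of a microlocal lift of orientation $(\omega_1,\omega_2)$ in $\pi$ is thus equivalent to the twin conditions $c(\pi') = N$ and $c(\chi_{\pi'}) = N$ on $\pi'$. The classification of irreducible admissible generic representations of $\GL_2(k)$ then forces $\pi'$ into the extremal principal series form $1 \boxplus \psi$ with $c(\psi) = N$: supercuspidals and twists of the Steinberg satisfy $c(\chi_{\pi'}) < c(\pi')$, while an irreducible $\chi_1' \boxplus \chi_2'$ with $c(\chi_1') + c(\chi_2') = N$ and $c(\chi_1' \chi_2') = N$ is forced by $c(\chi_1'\chi_2') \leq \max(c(\chi_1'), c(\chi_2'))$ to have one of the $c(\chi_i') = 0$. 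Untwisting gives $\pi \cong \chi_1 \boxplus (\chi_1 \psi)$, whose parameters $(\chi_1, \chi_2 := \chi_1 \psi)$ satisfy $c(\chi_1/\chi_2) = c(\psi) = N$, as required. Repeating the construction with the roles of $\chi_1$ and $\chi_2$ interchanged yields the second line, with orientation $(\omega_2, \omega_1)$. The two lines are distinct: a nonzero vector common to both would satisfy $\omega_1(a) \omega_2(d) = \omega_2(a) \omega_1(d)$ on the diagonal, i.e., $\omega_1/\omega_2 \equiv 1$ on $\mathfrak{o}^\times$, contradicting $c(\omega_1/\omega_2) = N \geq 1$. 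The main technical obstacle is modest in scope but central: it is the careful bookkeeping in the first step that turns $\omega_1(a)\omega_2(\det(g)/a)$ on $H$ into $\omega_1^2\big|_{\mathfrak{o}^\times} \cdot (\omega_2/\omega_1)(d)$ via the congruence $\det(g)/a \equiv d \pmod{\mathfrak{p}^N}$, so that a twist by $\chi_1^{-1} \circ \det$ reduces the biequivariance condition to ordinary newvector behavior.
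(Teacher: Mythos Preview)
Your approach is essentially identical to the paper's: twist by $\chi_1^{-1} \circ \det$ to reduce the microlocal lift condition to a newvector condition in $\pi'$, then combine newvector theory with the classification of representations satisfying $c(\chi_{\pi'}) = c(\pi')$ (the paper's Lemma~\ref{lem:extremal-central-chars}) to force $\pi' \cong 1 \boxplus \psi$.

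There is one genuine slip in the bookkeeping. You assert that ``since $c(\omega_1/\omega_2) = N$, each of $\omega_1, \omega_2$ factors through $\mathfrak{o}^\times/(1 + \mathfrak{p}^N)$,'' and then use this to split $\omega_1^{-1}(\det g) = \omega_1^{-1}(a)\omega_1^{-1}(d)$ and to replace $\omega_2(\det(g)/a)$ by $\omega_2(d)$. This implication is false: $c(\omega_1/\omega_2) = N$ says nothing about $c(\omega_1)$ or $c(\omega_2)$ individually. The fix is immediate and your conclusion is unaffected: compute instead
\[
\chi_1^{-1}(\det g)\,\omega_1(a)\,\omega_2(\det(g)/a)
= (\omega_2/\omega_1)(\det(g)/a),
\]
and then use only $c(\omega_2/\omega_1) = N$ together with $\det(g)/a \equiv d \pmod{\mathfrak{p}^N}$ to obtain $(\omega_2/\omega_1)(d)$. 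A second, smaller imprecision: the space of vectors in $\pi'$ with the $K_{-N_1..N_2}$-transformation property has dimension $\max(0, 1 + N - c(\pi'))$, not ``one-dimensional whenever $c(\pi') = N$ and empty otherwise.'' The correct logic is that nonemptiness forces $c(\pi') \leq N$, while $c(\chi_{\pi'}) = N$ combined with $c(\chi_{\pi'}) \leq c(\pi')$ forces $c(\pi') \geq N$; together these pin down $c(\pi') = N$ and hence dimension one. This is what the paper does, and what your argument in fact needs.
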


\begin{remark}
  We briefly compare
  with the archimedean analogue inspiring Definition \ref{defn:microlocal-lifts};
  a more complete exposition of this analogy
  seems beyond the scope of this article.
  Let $\pi$ be a principal series representation of
  $\PGL_2(\mathbb{R})$ of parameter $t \rightarrow \pm \infty$
  with lowest weight vector $\varphi_0$ corresponding to a
  spherical Maass form of eigenvalue $1/4 + t^2$ on some
  hyperbolic surface.  The Zelditch--Wolpert construction\footnote{
    We discuss here only the ``positive measure'' incarnation of that construction rather than the ``distributional'' one.
  } 
  of a microlocal lift $\varphi_1$ of $\varphi_0$ is given up to normalizing factors in
  terms of standard raising/lowering operators $X^n$ for
  $n \in \mathbb{Z}$ (see \cite{MR1814849, MR1859345})  by
  $\varphi_1 := \sum_{n:|n| \leq t_1} X^n \varphi_0$, where
  $|t| = t_1 t_2$ with $t_1, t_2 \rightarrow \infty$ as
  $|t| \rightarrow \infty$.  The choice
  $\varphi_2 := \sum_{n:|n| \leq t_1} (-1)^n X^n \varphi_0$
  also works.
  The analogue
  of
  $(|t|, \varphi_1, \varphi_2, |.|^{i t}, |.|^{-i t})$
  in
  the notation of
  Definition \ref{defn:microlocal-lifts}
  and Lemma \ref{thm:compute-microlocal-lifts}
  is
  $(q^N, v_1, v_2, \chi_1, \chi_2)$ with $q := \#
  \mathfrak{o}/\mathfrak{p}$
  and $v_1,v_2 \in \pi$ microlocal lifts
  of respective orientations $(\omega_1,\omega_2)$,
  $(\omega_2,\omega_1)$.
  The analogy
  may be obtained by comparing
  how $\GL_2(\mathfrak{o})$
  acts on $v_1,v_2$ to how
  the Lie algebra of $\PGL_2(\mathbb{R})$ acts on $\varphi_1,\varphi_2$.
  The factorization $|t| = t_1 t_2$ is roughly
  analogous to the partition $N = N_1 + N_2$.
  It is also instructive
  to compare the formulas for $\varphi_1,\varphi_2$
  in their induced models with those of \S\ref{sec-explicit-formulas}.
\end{remark}
\begin{remark}\label{rmk:other-microlocal}
  Le-Masson \cite{MR3245884} and Anantharaman--Le-Masson
  \cite{MR3322309} have introduced a notion of microlocal lifts
  on regular graphs and used that notion to prove some analogues
  of the quantum ergodicity theorem.  Definition
  \ref{defn:microlocal-lifts} serves different aims in
  that we do not explicitly vary the
  graph (except perhaps in the second sense indicated in Remark
  \ref{rmk:fix-split-l-que-graph});
  it would be interesting to extend it further and
  compare the two notions on any domain of overlap.
\end{remark}
    

For the remainder of \S\ref{sec:p-adic-microlocal},
take $k = \mathbb{Q}_p$,
so that $\GL_2(k) = G$.
Definition \ref{defn:microlocal-lifts}
applies to $\pi \in A_0(\mathbf{X})$.
\begin{theorem}[Basic properties of microlocal lifts]\label{thm:microlocal-basics}
  Let $N$ traverse a sequence of positive integers
  tending to $\infty$,
  and let $\varphi \in \pi \in A_0(\mathbf{X})$ be
  an $L^2$-normalized microlocal lift of level $N$ on
  $\mathbf{X}$
  with $L^2$-mass $\mu_\varphi$.
  \begin{itemize}
  \item {\bf Diagonal invariance.}
    
    Any weak subsequential limit of the sequence of measures $\mu_\varphi$
    is $a(\mathbb{Q}_p^\times)$-invariant.
  \item
    { \bf Lifting property.}
    
    Suppose temporarily that $p \neq 2$, so that $\nu(2) =
    0$.
    Let
    $\varphi' \in \pi$ be an $L^2$-normalized newvector
    of support $-N..N$,
    and let $\Psi \in \mathcal{A}(\mathbf{X})^K$
    be independent of $N$ and right invariant by
    $K := \GL_2(\mathbb{Z}_p)$.
    Then \[\lim_{N \rightarrow \infty} (\mu_{\varphi}(\Psi) -
    \mu_{\varphi'}(\Psi)) = 0.\]
  \item
    {\bf Equidistribution implication.}

    Suppose that $\mu_\varphi$ equidistributes as $N
    \rightarrow \infty$.
    Let $\varphi' \in \pi$
    be an $L^2$-normalized generalized newvector.
    Then $\mu_{\varphi'}$ equidistributes as $N \rightarrow \infty$.
  \end{itemize}
\end{theorem}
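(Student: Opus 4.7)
For the \emph{diagonal invariance}, I would first note that for $t \in \mathfrak{o}^\times$ the element $a(t) = \diag(t,1)$ lies in $K_{-N_1..N_2}$ on which $\varphi$ transforms by $(\omega_1,\omega_2)$, so $\pi(a(t))\varphi = \omega_1(t)\varphi$ is a unimodular multiple of $\varphi$; hence $\mu_\varphi$ is pointwise $a(\mathfrak{o}^\times)$-invariant before passage to any limit. Since $a(\mathbb{Q}_p^\times)$ is generated by $a(\mathfrak{o}^\times)$ together with $a(p)$, it remains to show $a(p)$-invariance of the limit. The key structural observation is the conjugation identity $a(p)^{-1} K_{-N_1..N_2} a(p) = K_{-(N_1-1)..(N_2+1)}$ combined with the obvious invariance of the diagonal character $(\omega_1,\omega_2)$ under such conjugation; these force $\pi(a(p))\varphi$ to be itself a microlocal lift of level $N$ with the same orientation but with shifted balance $(N_1-1,\,N_2+1)$. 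By the uniqueness clause of Lemma~\ref{thm:compute-microlocal-lifts}, this vector equals a unimodular scalar times the $L^2$-normalized such lift $\varphi''$, whence $\mu_\varphi(\Psi(\cdot a(p))) = \mu_{\varphi''}(\Psi)$ as probability measures. The remaining content is to verify that microlocal lifts of a given level whose balances differ by a bounded amount produce the same weak subsequential limits; this is the technical heart of the argument, and would be handled through the explicit formulas of \S\ref{sec-explicit-formulas} together with the local Rankin--Selberg analysis of \S\ref{sec-stationary-phase-local-rs}.

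For the \emph{lifting property} I would compare $\mu_\varphi(\Psi)$ and $\mu_{\varphi'}(\Psi)$ for $K$-invariant $\Psi$ by passing to the spherical projections $\Pi_K|\varphi|^2$ and $\Pi_K|\varphi'|^2$ on $\mathbf{X}/K$. Both vectors admit uniform explicit expressions in the induced model of $\pi$ (after the twist of Lemma~\ref{thm:compute-microlocal-lifts}), so the relevant projections can be evaluated cell-by-cell on the two Bruhat cells of $K$. At $p \neq 2$ the dyadic normalization constants on the two cells line up, yielding coincidence of the leading contributions and hence agreement of the two $L^2$-averages up to $o(1)$; the restriction $\nu(2)=0$ is precisely what makes these normalization factors cancel.

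For the \emph{equidistribution implication}, I would spectrally decompose $\tilde\Psi := \Psi - \int \Psi$ along the automorphic Plancherel decomposition $\mathcal{A}_0(\mathbf{X}) = \bigoplus_{\sigma \in A_0(\mathbf{X})} \sigma$, writing
\[
  \mu_\varphi(\Psi) - \int\Psi \;=\; \sum_\sigma \langle \tilde\Psi_\sigma \cdot \varphi,\; \varphi\rangle,
\]
and the analogous identity for $\varphi'$. For fixed smooth $\Psi$, only finitely many $\sigma$ contribute nontrivially. Each trilinear period factors via Ichino's formula into a global central $L$-value times local integrals at every place; the factors away from $p$ and the global factor are independent of the choice of vector in $\pi$, while the factor at $p$ sees the test vector directly. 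A direct comparison of the local Rankin--Selberg integrals at $p$ for the microlocal lift versus the generalized newvector --- both being optimal local test vectors at $p$ --- yields a uniform inequality $|\langle \tilde\Psi_\sigma \varphi',\varphi'\rangle| \ll |\langle \tilde\Psi_\sigma \varphi,\varphi\rangle|$ with implied constant depending only on $\Psi$. Summing over the finitely many contributing $\sigma$, the assumed vanishing of the spectral sum for $\varphi$ transfers to $\varphi'$, yielding equidistribution of $\mu_{\varphi'}$.

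The principal obstacle will be the balance-independence step inside the diagonal-invariance argument: two microlocal lifts of the same level and orientation but with balances differing by one are genuinely different vectors in $\pi$, and one must show that their $L^2$-masses agree in the weak limit against arbitrary fixed smooth $\Psi$. The only structural tool available is a careful stationary-phase analysis of the local Rankin--Selberg integrals, which the paper flags as its most technical component.
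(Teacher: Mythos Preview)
Your proposal is broadly correct in spirit but is organized quite differently from the paper, and the paper's route is both cleaner and more uniform.  The paper does \emph{not} treat the three assertions by separate mechanisms.  Instead it spectrally decomposes the observable $\Psi$ into components $\Psi \in \sigma$, reduces via Lemma~\ref{lem:one-dimensionals} to generic $\sigma$, and then invokes Prasad's uniqueness of trilinear functionals (Theorem~\ref{thm:uniqueness-trilinear-functionals}) to write the global period as $\ell = \mathcal{L}^{1/2}\,\ell_{\RS}$ for a single scalar $\mathcal{L}^{1/2}$ depending only on $(\sigma,\pi)$.  All three parts then reduce to statements about the local integral $\ell_{\RS}$, handled respectively by parts (I), (III), (I)+(II) of Theorem~\ref{thm:local-rs}.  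In particular, diagonal invariance falls out immediately: the explicit formula in (I) shows $\ell_{\RS}(W_1,\overline{W_v},v)$ depends on $W_1$ only through $\int_{k^\times} W_1(y)\,d^\times y$, which is manifestly invariant under $W_1 \mapsto \sigma(a(y_0))W_1$.  Your balance-shifting reformulation is correct but tautologically equivalent to this (by $G$-invariance of $\ell$, comparing $\mu_\varphi$ with $\mu_{\pi(a(p))\varphi}$ is the same as comparing $\ell(\Psi,\overline\varphi,\varphi)$ with $\ell(\sigma(a(p)^{-1})\Psi,\overline\varphi,\varphi)$), so you would end up doing the same local computation anyway.

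Two specific points.  First, your lifting-property sketch (``pass to $\Pi_K|\varphi|^2$ and evaluate cell-by-cell on Bruhat cells of $K$'') does not work as written: $\Pi_K|\varphi|^2$ is a global function on $\mathbf{X}/K$ that cannot be computed from the induced-model data alone without first spectrally decomposing and invoking a proportionality such as $\ell=\mathcal{L}^{1/2}\ell_{\RS}$.  Once you do that, you are back to the paper's approach, and the relevant local computation is precisely Theorem~\ref{thm:local-rs}(III).  Second, for the equidistribution implication you invoke Ichino's formula; the paper uses only Prasad's uniqueness, which is lighter and suffices because one needs only the \emph{proportionality} $\ell = \mathcal{L}^{1/2}\ell_{\RS}$, not the arithmetic interpretation of $|\mathcal{L}^{1/2}|^2$.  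The logic is then: equidistribution of $\mu_\varphi$ plus (I) forces $\mathcal{L}^{1/2}=o(p^{N/2})$, and (II) gives $\ell_{\RS}(\Psi,\overline{\varphi'},\varphi')=O(p^{-N/2})$, whence $\ell(\Psi,\overline{\varphi'},\varphi')=o(1)$.  Your inequality $|\langle\tilde\Psi_\sigma\varphi',\varphi'\rangle|\ll|\langle\tilde\Psi_\sigma\varphi,\varphi\rangle|$ is the net effect of this, but the paper's formulation separates the global scalar from the local integrals more transparently.
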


Theorem \ref{thm:microlocal-basics} is established in
\S\ref{sec:microlocal-basics} after developing the necessary
local preliminaries in \S\ref{sec-33} and
\S\ref{sec:basic-theory-microlocal}.  The proof involves
uniqueness of invariant trilinear forms\footnote{ It should be
  possible to avoid this comparatively deep fact in the proof of the first part of Theorem
  \ref{thm:microlocal-basics}, but it is required by the
  application to subconvexity (Theorem
  \ref{thm:weakly-subconvex}), and the calculations required by
  that application already suffice here.
}
on $\GL_2$
and stationary phase analysis of local Rankin--Selberg
integrals.
Theorem \ref{thm:microlocal-basics} is essentially local, i.e.,
does not exploit the arithmeticity of $\Gamma \leq G$,
and is stated here in a global setting only for convenience;
see Theorem \ref{thm:local-rs} for a local analogue.

\begin{remark}\label{rmk:lifting-behavior}
  The ``lifting property'' of Theorem
  \ref{thm:microlocal-basics} has been included only for the
  sake of illustration; it is not strictly necessary for the
  logical purposes of this paper.  We have assumed $p \neq 2$ in
  its statement because the corresponding assertion is false
  when $p=2$.  For general $p$ and non-spherical observables
  $\Psi$, there does not appear to be any simple relationship
  between the quantities $\mu_{\varphi}(\Psi)$ and
  $\mu_{\varphi'}(\Psi)$
  except that convergence to $\int_{\mathbf{X}} \Psi$
  of the first implies that of the
  second (the ``equidistribution implication'').
  The ``lifting'' relationship here
  is thus more subtle than that in \cite{MR2195133}.
\end{remark}

\subsection{Equidistribution of microlocal lifts}\label{sec:equid-micr-lifts}
Our core result (from which the others are ultimately derived)
is the following:
\begin{theorem}[Equidistribution of microlocal lifts]\label{thm:main}
  Let $N$ traverse a sequence of positive integers tending to
  $\infty$.
  Let $\varphi \in \mathcal{A}(\mathbf{X})$ be an $L^2$-normalized microlocal lift of level $N$ on $\mathbf{X}$.
  Then $\mu_\varphi$ equidistributes.
\end{theorem}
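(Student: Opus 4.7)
The plan is to follow the Lindenstrauss strategy, with the microlocal lift serving as a $p$-adic analogue of the Zelditch--Wolpert microlocal lift used in Theorem~\ref{thm:lind}. Since $\mathbf{X}$ is compact, the family of probability measures $\mu_\varphi$ is weak-$*$ precompact and no mass escapes. It therefore suffices to show that every weak-$*$ subsequential limit $\mu$ of $\mu_\varphi$ equals the uniform measure $\int_{\mathbf{X}}$. Fix such a $\mu$, extracted along some sequence $N \to \infty$.

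First, I would invoke the diagonal invariance in Theorem~\ref{thm:microlocal-basics}: any such $\mu$ is invariant under right translation by the split diagonal torus $A := a(\mathbb{Q}_p^\times)$. This is the substitute for invariance under the geodesic flow in the archimedean picture and is the structural payoff of the microlocal-lift definition. Next, I would record that $\varphi$ is a joint eigenfunction of the Hecke algebra generated by $\{T_\ell\}_{\ell \neq p}$; by a Hecke-recurrence argument in the style of \cite{MR2195133}, $\mu$ is then carried by the dynamics of these commuting correspondences in a manner sufficient for the measure-rigidity input below. Because $\pi \in A_0(\mathbf{X})$, no component of $\mu$ is supported on a one-dimensional (character) submodule, which rules out the obvious non-Haar invariant measures.

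The central analytic step, and the one I expect to be the main obstacle, is establishing positive entropy of almost every $A$-ergodic component of $\mu$ with respect to the $A$-action. Here I would adapt the Bourgain--Lindenstrauss method to the $p$-adic aspect: one uses the microlocal structure of $\varphi$ (specifically, the explicit transformation behavior $\pi(g)\varphi = \omega_1(a)\omega_2(\det(g)/a)\varphi$ on the subgroup $\GL_2(\mathfrak{o}) \cap \left[\begin{smallmatrix}\mathfrak{o} & \mathfrak{p}^{N_1}\\ \mathfrak{p}^{N_2} & \mathfrak{o}\end{smallmatrix}\right]$ with $c(\omega_1/\omega_2) = N$) to bound from above the $L^2$-mass of $\varphi$ on small Bowen balls for the $A$-flow. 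The oscillation provided by the character $\omega_1/\omega_2$ of conductor $p^N$ produces the cancellation needed to exclude concentration on sets of small dynamical dimension; combined with the Hecke-eigenfunction property and the standard amplification/combinatorial construction of elements in the Hecke algebra with prescribed geometric effect, this yields a uniform lower bound on the dimension of every ergodic component.

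With diagonal invariance, $A_0$-cuspidality, and positive entropy in hand, I would then invoke the Einsiedler--Lindenstrauss measure-classification theorem for $A$-invariant measures on $\mathbf{X}$ that are recurrent under the Hecke action: the only such measure of positive entropy on every ergodic component is the uniform Haar measure. This forces $\mu = \int_{\mathbf{X}}$, and since every subsequential limit agrees with the uniform measure, the full sequence $\mu_\varphi$ equidistributes, as claimed. The hard part is genuinely the entropy bound, because the usual Bourgain--Lindenstrauss input relies on explicit Archimedean localization of the microlocal lift; replacing it with the purely representation-theoretic localization encoded in Definition~\ref{defn:microlocal-lifts} is what makes the $p$-adic argument nontrivial.
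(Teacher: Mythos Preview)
Your overall architecture matches the paper exactly: pass to a weak-$*$ limit $\mu$ (compactness), verify the hypotheses of the Einsiedler--Lindenstrauss measure-classification result (Theorem~\ref{thm:measure-classification}), and conclude. The three ingredients --- diagonal invariance from Theorem~\ref{thm:microlocal-basics}, Hecke recurrence (\S\ref{sec:recurrence}), and positive entropy (\S\ref{sec:pos-ent}) --- are precisely the ones the paper assembles.

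Where you go astray is in diagnosing the mechanism behind the entropy bound. You propose that the oscillation of the character $\omega_1/\omega_2$ of conductor $p^N$ is what prevents concentration on small Bowen balls, and you call this ``the hard part.'' In the paper it is not: Theorem~\ref{thm:pos-ent} is a \emph{uniform} statement $\mu_\varphi(xB(C,\eps)) \leq C_1 \eps^{c_2}$ valid for \emph{every} $L^2$-normalized $\varphi \in \pi \in A_0(\mathbf{X})$, with no reference to the microlocal structure or to $N$ at all. The proof is pure Hecke amplification at auxiliary primes $\ell \neq p$ (Lemmas~\ref{lem:hecke-returns} and~\ref{lem:geom-ampl}), with the $p$-adic adaptation being the replacement of the discreteness of $\mathbb{Z}$ in $\mathbb{R}$ by that of $\mathbb{Z}[1/p]$ in $\mathbb{R}\times\mathbb{Q}_p$ in the Hecke-return count. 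The microlocal transformation law you quote plays no role there; it is not clear how oscillation under a compact torus subgroup would by itself bound mass on $A$-Bowen tubes.

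The genuine novelty attached to the microlocal lift is instead concentrated in the \emph{diagonal invariance}, which you pass over in one line. That invariance is not a formal consequence of Definition~\ref{defn:microlocal-lifts}; it is proved via uniqueness of trilinear functionals and the stationary-phase computation of Theorem~\ref{thm:local-rs}(I), carried out in \S\ref{sec:microlocal-basics}. Your sentence about $\pi \in A_0(\mathbf{X})$ ruling out character-supported components is also not part of the argument: once the three hypotheses are verified, Theorem~\ref{thm:measure-classification} delivers $\mu = \int_{\mathbf{X}}$ directly.
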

The proof depends upon an analogue
of Lindenstrauss's celebrated result \cite{MR2195133}:
\begin{theorem}[Measure classification]\label{thm:measure-classification}
  Let $\mu$ be a probability measure on $\mathbf{X}$,
  invariant by the center of $G$, with the properties:
  \begin{enumerate}
  \item $\mu$ is $a(\mathbb{Q}_p^\times)$-invariant.
  \item $\mu$ is $T_\ell$-recurrent for some split prime $\ell \neq
    p$.
  \item The entropy of almost every ergodic component of $\mu$ is
    positive
    for the $a(\mathbb{Q}_p^\times)$-action.
  \end{enumerate}
  Then $\mu$ is the uniform measure.
\end{theorem}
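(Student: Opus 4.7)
The plan is to derive the measure-classification from the Einsiedler--Lindenstrauss rigidity theorem for commuting diagonalizable flows on $S$-arithmetic quotients of (P)$\GL_2$, of which this is essentially a reformulation. The first move is to recast the problem in a setting where Hecke recurrence at $\ell$ becomes genuine dynamical recurrence. Since $\Gamma = R[1/p]^\times$ comes from a global quaternion order, there is a natural $\{p,\ell\}$-arithmetic quotient $\mathbf{X}' \cong \Gamma' \backslash (\GL_2(\mathbb{Q}_p) \times \GL_2(\mathbb{Q}_\ell))$, with $\Gamma'$ a congruence subgroup of $R[1/p\ell]^\times$, such that $\mathbf{X} = \mathbf{X}'/K_\ell$ with $K_\ell := \GL_2(\mathbb{Z}_\ell)$. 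Under this identification, the split-prime hypothesis on $\ell$ realizes $T_\ell$ as the averaging operator for a fixed diagonal element $a_\ell \in \GL_2(\mathbb{Q}_\ell)$ of $\ell$-adic valuation one, so that $T_\ell$-recurrence on $\mathbf{X}$ is literal $a_\ell$-recurrence after lifting $\mu$ to the unique $K_\ell$-invariant probability measure $\tilde\mu$ on $\mathbf{X}'$. The $A := a(\mathbb{Q}_p^\times)$-invariance and the positive-entropy hypothesis transfer to $\tilde\mu$ without change.

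Next I would pass to the leafwise/conditional measures $\tilde\mu_x^{U^+}$ of $\tilde\mu$ along the expanding horospherical subgroup $U^+ \leq G$ for the $A$-action. By the standard Hopf/leafwise reduction, it suffices to prove that $\tilde\mu_x^{U^+}$ is Haar on $U^+(\mathbb{Q}_p)$ for $\tilde\mu$-almost every $x$: combined with $A$-invariance this yields $B$-invariance, and then the Mautner phenomenon (equivalently: the opposite unipotent is generated by $A$-conjugates of $U^+$ together with the Weyl element after an elementary calculation) promotes this to $\SL_2(\mathbb{Q}_p)$-invariance. Together with the assumed center invariance and the fact that $Z(G) \cdot \SL_2(\mathbb{Q}_p)$ acts ergodically on $\mathbf{X}'$ for its Haar measure, this forces $\tilde\mu$, and hence $\mu$, to be uniform.

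The technical heart of the argument, following Lindenstrauss and its $p$-adic extension by Einsiedler--Lindenstrauss, is to extract nontrivial translational invariance of $\tilde\mu_x^{U^+}$ from the $a_\ell$-recurrence. The mechanism: each $a_\ell$-return produces two generic points that are close inside $\mathbf{X}'$ in the $\ell$-factor; conjugating these returns by high powers $a(p^n) \in A$ contracts the $\ell$-adic proximity while simultaneously translating it into small, explicitly controlled displacements in $U^+(\mathbb{Q}_p)$ under which the leafwise measure $\tilde\mu_x^{U^+}$ must be preserved. The positive-entropy hypothesis, through the entropy--rigidity machinery of Einsiedler--Katok--Lindenstrauss, prevents $\tilde\mu_x^{U^+}$ from being atomic or supported on a proper closed subgroup of $U^+(\mathbb{Q}_p)$; combined with the displacements above, the group of translations stabilizing $\tilde\mu_x^{U^+}$ must then be dense, hence all of $U^+(\mathbb{Q}_p)$.

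The main obstacle is exactly this last promotion step. Without the positive-entropy input, the Hecke displacements could in principle accumulate only on a discrete or otherwise thin subset of $U^+(\mathbb{Q}_p)$, and one would be unable to rule out $\tilde\mu$ being supported on a closed $A$-invariant set of lower-dimensional type (for instance, a closed orbit of a non-split torus). The delicate interplay between the ultrametric product structure of leafwise measures on the $p$-adic horocycle, the quantitative form of positive entropy, and the specific geometry of Hecke returns is what makes this step work, and is the deep content borrowed from the Einsiedler--Lindenstrauss measure-rigidity package.
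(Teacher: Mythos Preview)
Your approach is essentially the paper's: lift to the $\{p,\ell\}$-arithmetic quotient via strong approximation and invoke the Einsiedler--Lindenstrauss rigidity result. The paper treats the latter as a black box (Theorem~\ref{thm:EL}) and concentrates on carefully verifying its hypotheses (in particular the strong-approximation identification \eqref{eq:strong-approx-consequences} and the triviality of the stabilizer group $M_1$), whereas you instead sketch its internal mechanism; one small correction is that $T_\ell$-recurrence lifts to recurrence for the full $\PGL_2(\mathbb{Q}_\ell)$-action on the $K_\ell$-invariant lift $\tilde\mu$, not for a single element $a_\ell$, and that is the form the Einsiedler--Lindenstrauss input requires.
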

We explain in \S\ref{sec:measure-classification}
the specialization of Theorem \ref{thm:measure-classification}
from a result of Einsiedler--Lindenstrauss \cite[Thm 1.5]{MR2366231}.
To deduce Theorem \ref{thm:main}, we apply Theorem \ref{thm:measure-classification} 
with $\mu$ any weak limit of the $L^2$-masses of a sequence of
$L^2$-normalized microlocal lifts of level tending to $\infty$.  Since $\mathbf{X}$ is compact, $\mu$ is a probability measure.
The invariance hypothesis follows from the diagonal invariance of Theorem \ref{thm:microlocal-basics},
while the $T_\ell$-recurrence and positive entropy hypotheses
are verified below in \S\ref{sec:recurrence} and \S\ref{sec:pos-ent}.
The proof of our main result Theorem \ref{thm:main} is then complete.
Theorem \ref{thm:main}
and the equidistribution implication of Theorem \ref{thm:microlocal-basics}
imply Theorem \ref{thm:equid-balanced-newvectors}.

\subsection{Estimates for
  $L$-functions}\label{sec:estim-l-funct}
For definitions of the $L$-functions and local distinguishedness see \cite{MR911357,
  MR2449948}.
We record the following because it provides an unambiguous
benchmark
of the strength of our results.

\begin{theorem}[Weakly subconvex bound]\label{thm:weakly-subconvex}
  Fix $\sigma \in A_0(\mathbf{X})$.  Let
  $\pi \in A_0(\mathbf{X})$ traverse a sequence with
  $C(\overline{\pi} \times \pi) \rightarrow \infty$.  Assume
  that $\pi$ belongs to the principal series and that
  $\sigma \otimes \overline{\pi} \otimes \pi$ is locally
  distinguished.  Then
  \begin{equation}\label{eq:weakly-subconvex}
    \frac{L(\sigma \times \overline{\pi} \times \pi, 1/2)}{
      L(\ad \pi, 1)^2
    }
    = o(C(\sigma \times \overline{\pi} \times \pi)^{1/4}).
  \end{equation}
\end{theorem}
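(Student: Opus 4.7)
The plan is to combine the Ichino triple product formula with the equidistribution of microlocal lifts provided by Theorem \ref{thm:main}. First, choose an $L^2$-normalized microlocal lift $\varphi \in \pi$ of level $N := c(\overline{\pi} \otimes \pi)/2$. Such a $\varphi$ exists, is essentially unique, and has $N \to \infty$ along our sequence: by Lemma \ref{thm:compute-microlocal-lifts}, microlocal lifts exist precisely when $\pi$ is an irreducible principal series with nontrivial character ratio, which is exactly our hypothesis once $C(\overline{\pi} \times \pi)$ is large. Choose also a fixed $L^2$-normalized vector $\Phi \in \sigma$; since $\sigma \in A_0(\mathbf{X})$ is not one-dimensional, we may arrange $\int_{\mathbf{X}} \Phi = 0$.

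The Ichino period formula, applied to the triple $(\sigma, \overline{\pi}, \pi)$ and the vectors $(\Phi, \overline{\varphi}, \varphi)$, expresses
\[
  |\mu_\varphi(\Phi)|^2 \;=\; \left|\int_{\mathbf{X}} \Phi\, |\varphi|^2 \right|^2
  \;=\;
  \frac{L(\sigma \times \overline{\pi} \times \pi, 1/2)}{L(\ad \sigma, 1)\, L(\ad \pi, 1)^2}
  \cdot C_\infty \cdot I_p(\Phi, \varphi),
\]
where $C_\infty$ gathers the unramified finite and archimedean local factors (depending only on $\sigma$ and the fixed global data), and $I_p(\Phi,\varphi)$ is the normalized local trilinear period at $p$. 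The local distinguishedness hypothesis guarantees that the relevant local trilinear form is nonzero, so this identity is nontrivial. By Theorem \ref{thm:main}, $\mu_\varphi$ equidistributes, and since $\Phi$ is orthogonal to constants, $\mu_\varphi(\Phi) = o(1)$. Hence the left-hand side above is $o(1)$.

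The heart of the argument is a matching lower bound
\[
  I_p(\Phi, \varphi) \;\gg\; C(\sigma \times \overline{\pi} \times \pi)^{-1/4}
\]
for the local trilinear period, uniformly as the orientation characters $(\omega_1,\omega_2)$ vary with $N$. This is exactly the ``analysis of local Rankin--Selberg integrals involving the microlocal lifts introduced here as well as classical newvectors'' advertised in the abstract, carried out via the stationary phase method of \S\ref{sec-stationary-phase-local-rs}. Conceptually, microlocal lifts have matrix coefficients whose support and oscillation are tailored so that pairing against a fixed test vector $\Phi$ yields a local integral of the expected size; the point of introducing microlocal lifts rather than bare newvectors is precisely that this local integral is large enough to turn an $o(1)$ global estimate into a subconvex bound. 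Combining the local lower bound with $|\mu_\varphi(\Phi)|^2 = o(1)$ yields
\[
  \frac{L(\sigma \times \overline{\pi} \times \pi, 1/2)}{L(\ad \pi, 1)^2}
  \;=\; o\bigl(C(\sigma \times \overline{\pi} \times \pi)^{1/4}\bigr),
\]
as required.

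The main obstacle is the local lower bound on $I_p(\Phi,\varphi)$: one must control a trilinear integral whose oscillatory factor depends on the varying characters $\omega_1, \omega_2$ and whose support on $\GL_2(\mathbb{Q}_p)$ is dictated by the microlocal level. This has two technical components: an exact computation (or sharp lower bound) of the newvector-against-microlocal-lift local Rankin--Selberg integral, and a verification that the chosen $\Phi$ can be replaced, without loss, by a vector whose local component at $p$ pairs nontrivially with the microlocal lift. Both amount to stationary phase in the $p$-adic setting and are the source of the restriction to principal series, since the required diagonal-invariant ``lift'' is unavailable in the supercuspidal case.
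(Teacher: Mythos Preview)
Your proposal is correct and follows essentially the same route as the paper: Ichino's triple product formula, equidistribution of microlocal lifts (Theorem \ref{thm:main}), and a local lower bound at $p$ combine to give the weakly subconvex estimate. The paper makes the local step completely explicit via Theorem \ref{thm:local-rs}(I), taking for $\Phi$ the vector whose Kirillov-model image is $1_{\mathfrak{o}^\times}$, which gives $\ell_{\RS}(\Phi,\overline{\varphi},\varphi)\asymp p^{-N/2}$ and hence $I_p \asymp C^{-1/4}$; note that only the microlocal-lift computation (part (I)) is needed here, not the newvector estimates (parts (II), (III)) you allude to.
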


The previously best known estimate for the LHS of
\eqref{eq:weakly-subconvex} is the general weakly subconvex
estimate of Soundararajan \cite{soundararajan-2008},
specializing here to
$L \ll C^{1/4} / (\log C)^{1-\eps}$
with $L := L(\sigma \times \overline{\pi} \times \pi, 1/2)$,
$C := C(\sigma \times \overline{\pi} \times \pi)$.
The bound
\eqref{eq:weakly-subconvex} improves upon that estimate in the
unlikely (but difficult to exclude) case that $L(\ad \pi,1)$ is
exceptionally small, which turns out to be the most difficult
one for equidistribution problems; see \cite{MR2680499} for
further discussion.

Theorem \ref{thm:main} implies Theorem
\ref{thm:weakly-subconvex} after a local calculation with the
triple product formula (see \S\ref{sec:microlocal-basics}); in
fact, the calculation shows that the two results are equivalent.

\begin{remark}\label{rmk:failure-converse-weakly-subconvex-newvectors-que}
  Theorem \ref{thm:weakly-subconvex}
  implies Theorem \ref{thm:equid-balanced-newvectors}, but
  the converse does not hold in general; a special case of the
  failure of that converse was noted and discussed at length in
  \cite[\S1]{PDN-AP-AS-que}.  The present work may thus be
  understood as clarifying that discussion: the equivalence between subconvexity
  and equidistribution problems in the depth aspect is restored by
  working not with newvectors, but instead with the $p$-adic
  microlocal lifts introduced here.
\end{remark}

\subsection{Further remarks}\label{sec:further-remarks}

\begin{remark}
  Theorems \ref{thm:equid-balanced-newvectors} and
  \ref{thm:main}
  apply only to sequences of
  vectors $\varphi$ that
  generates irreducible $\mathcal{H}$-modules.
  One can ask whether the
  conclusion holds under the (hypothetically) weaker assumption
  that $\varphi$ generates
  an irreducible $G$-module.  The problem formulated this way
  makes sense for any finite volume quotient
  $\Gamma' \backslash G$, not necessarily 
  arithmetic; an affirmative answer would represent a $p$-adic
  analogue of the Rudnick--Sarnak quantum unique ergodicity
  conjecture \cite{MR1266075}.  In that direction, we note that
  the method of Brooks--Lindenstrauss \cite{MR3260861} should
  apply in our setting, allowing one to relax the hypothesis of
  irreducibility under the full Hecke algebra to that under a
  single auxiliary Hecke operator $T_\ell$ for some fixed split
  prime $\ell \neq p$.
\end{remark}


\begin{remark}
  Our results apply to principal series representations of
  conductor $p^N$ with $p$ fixed and $N \rightarrow \infty$.
  A natural question is whether one can establish analogous
  results for $N$ fixed, such as $N = 100$, and
  $p \rightarrow \infty$.  We highlight here the weaker question
  of whether one can establish equidistribution (in a balanced case, cf. Remark \ref{rmk:K0-case-easy}) as
  $N \rightarrow \infty$ for $p$ satisfying $p \leq p_0(N)$ for
  some $p_0(N)$ tending \emph{effectively} to $\infty$ as
  $N \rightarrow \infty$.  Our results and a diagonalization
  argument imply an ineffective analogue.
\end{remark}

\begin{remark}
  The crucial local results of this article have been formulated
  and proved in generality, i.e., over any non-archimedean local field.
  On the other hand,
  we have assumed in our global results that the subgroup
  $\Gamma$ of $G$ was constructed from a \emph{maximal order}
  in a quaternion algebra \emph{over $\mathbb{Q}$}.  We expect that our
  results hold more generally:
  \begin{enumerate}
  \item The statements and proofs of all our results except
    Theorem \ref{thm:weakly-subconvex}
    extend straightforwardly to the case that
    $\Gamma$ arises from a fixed \emph{Eichler order} in a
    quaternion algebra over $\mathbb{Q}$.  To extend Theorem
    \ref{thm:weakly-subconvex}
    in that direction would require some local triple
    product estimates at the ``uninteresting'' primes $\ell \neq p$ which we
    do not pursue here.
  \item 
    Our results should extend to Eichler
    orders in totally definite quaternion algebras over
    \emph{totally
      real number fields}, but some mild care is required in formulating such
    extensions when the class group has nontrivial $2$-torsion: as observed in a related context in
    \cite{PDN-HMQUE}, there are sequences of dihedral forms that
    fail to satisfy the most naive formulation of quantum unique
    ergodicity.
  \item We expect our results extend to automorphic forms
    on definite quaternion algebras having
    fixed nontrivial infinity type; such an extension would
    require a more careful study of the measure classification
    input
    in \S\ref{sec:measure-classification}.
  \item Over function fields,
    analogues of our results should follow more directly and in
    quantitatively stronger forms from Deligne's theorem and
    extensions of the triple product formula to the function
    field setting.
  \end{enumerate}
  We leave such extensions to the interested reader.
\end{remark}



\subsection*{Organization of this paper.}
We verify the measure-classification (Theorem \ref{thm:measure-classification}) and its hypotheses
in \S\ref{sec:measure-classification}, \S\ref{sec:recurrence},
\S\ref{sec:pos-ent}.
We review the representation theory of $\GL_2(k)$
in \S\ref{sec-33}.
In \S\ref{sec:basic-theory-microlocal} and \S\ref{sec:microlocal-basics},
we prove our core results,
notably Theorem
\ref{thm:microlocal-basics}, and their applications.
Some additional results of independent interest are
recorded
along the way.


\subsection*{Acknowledgements}
This paper owes an evident debt of
ideas and inspiration to E. Lindenstrauss's work
\cite{MR2195133};
we thank him also for helpful feedback and interest.  We thank M. Einsiedler for helpful
discussions on measure classification and feedback on an earlier draft, P. Sarnak and
A. Venkatesh for several helpful discussions informing our
general understanding of microlocal lifts and microlocal
analysis, A. Saha for helpful references concerning conductors,
S. Jana for feedback on entropy bounds,
Y. Hu
for helpful clarifying questions,
and E. Kowalski, Ph.
Michel and D. Ramakrishnan for encouragement.
We gratefully acknowledge the support of
NSF grant OISE-1064866
and SNF grant SNF-137488 during the work leading
to this paper.

\section{Measure classification\label{sec:measure-classification}}
\label{sec-3}
The purpose of this section is to deduce Theorem \ref{thm:measure-classification} 
from the following specialization to $\mathbb{Q}_p$
of a result of Einsiedler--Lindenstrauss \cite[Thm 1.5]{MR2366231}:
\begin{theorem}\label{thm:EL}
  Let $G = G_1 \times G_2$, where $G_1$ is a semisimple linear
  algebraic group over $\mathbb{Q}_p$ with $\mathbb{Q}_p$-rank $1$ and
  $G_2$ is a characteristic zero $S$-algebraic group.  Let
  $\Delta \subset G$ be a discrete subgroup.  Let $A_1$ be a
  $\mathbb{Q}_p$-split torus of $G_1$ and let $\chi$ be a nontrivial
  $\mathbb{Q}_p$-character of $A_1$ that can be extended to $C_{G_1}(A_1)$.
  Let $M_1 = \{h \in C_{G_1}(A_1) : \chi(h) = 1\}$.  Let $\nu$ be an
  $A_1$-invariant, $G_2$-recurrent probability measure on
  $\Delta \backslash G$ such that
  \begin{enumerate}
  \item almost every $A_1$-ergodic component of $\nu$ has
    positive ergodic theoretic entropy with respect to some
    $a \in A_1$ with $|\chi(a)| \neq 1$, and
  \item 
    for $\nu$-a.e. $x \in \Delta \backslash G$, the group
    $\{h \in M_1 \times G_2 : x h = x\}$ is finite.
  \end{enumerate}
  Then $\nu$ is a convex combination of
  homogeneous measures,
  each of which is supported on an orbit of a subgroup $H$
  which contains a finite index subgroup
  of a semisimple algebraic subgroup
  of $G_1$ of $\mathbb{Q}_p$-rank one.
\end{theorem}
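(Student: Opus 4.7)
The plan is to follow the measure rigidity framework of Einsiedler--Katok--Lindenstrauss, adapted to the $p$-adic/$S$-arithmetic setting. The theorem concerns an action of a one-dimensional torus $A_1$ in a rank-one semisimple group $G_1$, with an auxiliary factor $G_2$ supplying recurrence. The strategy is to leverage positive entropy of $A_1$-ergodic components to force transverse invariance under unipotent subgroups of $G_1$, and then to combine this with $A_1$-invariance to obtain invariance under a rank-one semisimple subgroup, so that $\nu$ decomposes into homogeneous measures along its orbits.

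The first step is to fix $a \in A_1$ with $|\chi(a)| \neq 1$ and to form the contracting and expanding horospherical subgroups $U^-, U^+ \leq G_1$ for the conjugation action of $a$. Because $G_1$ has $\mathbb{Q}_p$-rank one, $U^\pm$ are (essentially one-dimensional) unipotent subgroups, and $\langle A_1, U^+, U^-\rangle$ generates a finite-index subgroup of a semisimple rank-one algebraic subgroup $H_1$ of $G_1$. Attach to $\nu$ its leafwise (conditional) measures $\nu_x^{U^+}$ along $U^+$-orbits via the Rohlin/Margulis--Tomanov machinery. A classical entropy--contraction inequality, together with hypothesis (1) that $a$-entropy is positive on almost every $A_1$-ergodic component, implies that $\nu_x^{U^+}$ is $\nu$-a.e.\ non-atomic.

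The key technical step, and the main obstacle, is to upgrade non-atomicity of the leafwise measures to actual translation invariance under $U^+$. Here one invokes the high entropy / recurrence argument: the $A_1$-invariance of $\nu$ transports the leafwise measures equivariantly, while $G_2$-recurrence from hypothesis (2) returns $\nu$-typical points near themselves along orbits in $G_2$. Using that $M_1 \times G_2$ acts with finite stabilizers (the remaining hypothesis), one rules out the degenerate possibility that the transverse measure concentrates on a proper closed subgroup of $U^+$ tilted by a nontrivial centralizer direction. A standard polynomial divergence / shearing argument in the one-dimensional $U^+$ then produces genuine $U^+$-translation invariance. The symmetric argument with $a^{-1}$ yields $U^-$-invariance, and together with $A_1$-invariance one obtains $H_1$-invariance.

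Once $\nu$ is $H_1$-invariant, one decomposes $\nu$ into $H_1$-ergodic components. Each ergodic component is supported on a closed $H_1$-orbit in $\Delta\backslash G$, which by a Borel density / Ratner-type rigidity argument in the rank-one $S$-arithmetic setting must be of the form $\Delta g H$ for some closed subgroup $H$ containing a finite index subgroup of $H_1$. Averaging over the ergodic decomposition expresses $\nu$ as a convex combination of such homogeneous measures, giving the stated conclusion. The delicate point throughout, and the place where one genuinely needs the full Einsiedler--Lindenstrauss machinery cited as \cite{MR2366231}, is the passage in the third paragraph from entropy positivity to additional unipotent invariance; everything else is a more or less formal consequence of rank-one structure together with recurrence.
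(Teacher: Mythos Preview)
The paper does not give its own proof of this theorem: it is stated as a specialization to $\mathbb{Q}_p$ of \cite[Thm~1.5]{MR2366231} (Einsiedler--Lindenstrauss) and is invoked as a black box to deduce Theorem~\ref{thm:measure-classification}. So there is no ``paper's own proof'' to compare against; the paper's contribution in \S\ref{sec-3} is the deduction of Theorem~\ref{thm:measure-classification} from Theorem~\ref{thm:EL}, not a proof of Theorem~\ref{thm:EL} itself.

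Your outline is a reasonable high-level caricature of the Einsiedler--Lindenstrauss argument (entropy $\Rightarrow$ nontrivial leafwise measures along horospherical foliations; recurrence in the $G_2$-factor plus finiteness of stabilizers in $M_1\times G_2$ to align returns and rule out degenerate supports; shearing/low-entropy method to upgrade to genuine unipotent invariance; then combine with $A_1$-invariance), and you correctly flag that the substantive work lies in the entropy-to-invariance step. A couple of points are stated too loosely to pass as a proof: $U^\pm$ need not be one-dimensional in a general $\mathbb{Q}_p$-rank-one group, so ``shearing in the one-dimensional $U^+$'' is not quite the right picture; and once $H_1$-invariance is obtained, the $H_1$-ergodic components are homogeneous under some closed subgroup $H$ containing (a finite-index subgroup of) $H_1$, not necessarily supported on closed $H_1$-orbits per se. These are exactly the subtleties handled in \cite{MR2366231}, which is why the paper simply cites that result rather than reproving it.
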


To deduce Theorem \ref{thm:measure-classification}
from Theorem \ref{thm:EL} requires no new ideas,
but we record a complete verification for completeness.

\subsection{Consequences of strong approximation}
\label{sec:strong-approx}
Recall that $R$ is a maximal order
in a definite quaternion algebra $B$.
For a prime $p$,
we shall use the notations
$B_p := B \otimes_{\mathbb{Q}} \mathbb{Q}_p$,
$R_p := R \otimes_{\mathbb{Z}} \mathbb{Z}_p$.
A superscripted $(1)$ denotes ``norm one elements,''
e.g.,  $B_p^{(1)} := \{b \in B_p^\times : \nr(b) = 1\}$.
Denote by $\mathbb{A}_f$ the finite adele ring of $\mathbb{Q}$
and $\hat{B} := B \otimes_{\mathbb{Q}} \mathbb{A}_f$.
Regard $B^\times, B_p^\times, R_p^\times$ as subsets of $\hat{B}^\times$ in the standard way.
\begin{lemma}\label{lem:strong-approx-crit}
  Let $H$ be a subgroup of $\hat{B}^\times$
  for which:
  \begin{enumerate}
  \item[(i)]
    There is a prime $p$ that splits $B$
    for which $H$ contains an open subgroup
    of $\hat{B}^{(1)}$ containing $B_p^{(1)}$.
  \item[(ii)]
    The image $\nr(H)$ of $H$ under
    the reduced norm $\nr :  \hat{B}^\times \rightarrow
    \mathbb{A}_f^\times$
    satisfies $\mathbb{Q}_+^\times \nr(H) = \mathbb{A}_f^\times$.
  \end{enumerate}
  Then $B^\times H = \hat{B}^\times$.
\end{lemma}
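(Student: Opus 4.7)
My plan is to combine two classical inputs: the surjectivity of the reduced norm $\nr\colon B^\times \twoheadrightarrow \mathbb{Q}_+^\times$, which holds because $B$ is definite (a special case of Hasse--Schilling--Maass, or directly from positive-definiteness of the norm form on $B_\infty$), together with strong approximation for the simply connected $\mathbb{Q}$-group $\SL_1(B) = B^{(1)}$ at the prime $p$.

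Fix $x \in \hat{B}^\times$; the goal is to produce $b \in B^\times$ and $h \in H$ with $x = bh$. First I will reduce to the norm-one case. Hypothesis (ii) produces $q \in \mathbb{Q}_+^\times$ and $h \in H$ with $\nr(x) = q \cdot \nr(h)$ in $\mathbb{A}_f^\times$, and surjectivity of the reduced norm furnishes $b \in B^\times$ with $\nr(b) = q$. Then $y := b^{-1} x h^{-1}$ satisfies $\nr(y) = 1$, so $y \in \hat{B}^{(1)}$, and it suffices to show $y \in B^\times H$.

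Next I will apply strong approximation. Since $B$ splits at $p$, the group $B_p^{(1)} \cong \SL_2(\mathbb{Q}_p)$ is non-compact, so Kneser's strong approximation theorem for the simply connected, almost $\mathbb{Q}$-simple group $\SL_1(B)$, applied with $S = \{\infty, p\}$, asserts that $B^{(1)} \cdot B_\infty^{(1)} \cdot B_p^{(1)}$ is dense in the adelic group $B_\infty^{(1)} \times \hat{B}^{(1)}$. Projecting away the archimedean factor, $B^{(1)} \cdot B_p^{(1)}$ is dense in $\hat{B}^{(1)}$. Let $U$ be the open subgroup from hypothesis (i), so that $B_p^{(1)} \subseteq U \subseteq H \cap \hat{B}^{(1)}$. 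Density combined with openness of $U$ then forces $B^{(1)} \cdot U = \hat{B}^{(1)}$, so I can write $y = b' u$ with $b' \in B^{(1)} \subset B^\times$ and $u \in U \subset H$. Then $x = b b' u h \in B^\times H$, completing the argument.

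The only substantive input is the strong approximation theorem, and there is no real obstacle beyond invoking it correctly; everything else is bookkeeping about norms and double cosets. The one place to be slightly careful is that hypothesis (i) asserts $U \supseteq B_p^{(1)}$ rather than $U \supseteq$ some small neighborhood of the identity at $p$, which is exactly the condition that lets the density of $B^{(1)} \cdot B_p^{(1)}$ inside $\hat{B}^{(1)}$ be upgraded to the equality $B^{(1)} \cdot U = \hat{B}^{(1)}$ without any further work.
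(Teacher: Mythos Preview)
Your proof is correct and follows essentially the same route as the paper's: reduce to the norm-one kernel using surjectivity of $\nr\colon B^\times \to \mathbb{Q}_+^\times$ together with hypothesis~(ii), then invoke strong approximation for $B^{(1)}$ at the split prime $p$ and hypothesis~(i) to finish. The only difference is cosmetic: the paper collapses your first two steps into the single observation that one can choose $\gamma \in B^\times$ and $h \in H$ with $\gamma x h \in \hat{B}^{(1)}$.
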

\begin{proof}
  It is known
  that $\nr : B^\times \rightarrow \mathbb{Q}_+^\times$ is surjective.
  Let $b \in \hat{B}^\times$
  be given.
  By (ii),
  there exists $\gamma \in B^\times$
  and $h \in H$
  for which $\gamma b h \in \hat{B}^{(1)}$.
  Let $p$ be as in (i).
  Strong approximation for the simply-connected semisimple
  algebraic group $B^{(1)}$
  and its non-compact factor $B_p^{(1)}$
  implies that $B^{(1)} B_p^{(1)}$ is dense in $\hat{B}^{(1)}$.
  By (i), we may write
  $\gamma b h = \delta h'$ for some $\delta \in B^{(1)}$ and $h'
  \in H$.
  Therefore $b = \gamma^{-1} \delta h' h^{-1}$ belongs to
  $B^\times H$,
  as required.
\end{proof}
Let $p$ be a split prime for $B$.
For any prime $\ell$,
one has $\nr(B_\ell^\times) = \mathbb{Q}_\ell^\times$;
because $R$ is a maximal order (in particular, an Eichler order),
one has moreover that $\nr(R_\ell^\times) = \mathbb{Z}_\ell^\times$.
The hypotheses of Lemma \ref{lem:strong-approx-crit}
thus apply to
$H = B_p^\times \prod_{\ell \neq p} R_\ell^\times$:
(i) is clearly satisfied,
while (ii) follows
from the consequence
$\mathbb{Q}_+^\times
\mathbb{Q}_p^\times \prod_{\ell \neq p} \mathbb{Z}_\ell^\times =
\mathbb{A}_f^\times$
of strong approximation for the ideles.
For similar but simpler reasons, the hypotheses
apply also to $H = B_p^\times B_\ell^\times \prod_{q \neq \ell,p}
R_q^\times$.
Thus
\[
B^\times B_p^\times \prod_{\ell \neq p} R_\ell^\times
= \hat{B}^\times
=
B^\times B_p^\times B_\ell^\times  \prod_{q \neq \ell, p} R_q^\times.
\]
We have
$B^\times \cap \prod_{\ell \neq p} R_\ell^\times =
R[1/p]^\times$
and
$B^\times \cap \prod_{q \neq \ell, p} R_q^\times =
R[1/p \ell]^\times$,
whence the natural identifications
\begin{equation}\label{eq:strong-approx-consequences-prelim}
  R[1/p]^\times \backslash B_p^\times/\mathbb{Q}_p^\times 
=
B^\times \backslash \hat{B}^\times / \mathbb{Q}_p^\times \prod_{\ell \neq p}
R_\ell^\times
=
R[1/p \ell]^\times \backslash B_p^\times B_\ell^\times / \mathbb{Q}_p^\times R_\ell^\times.
\end{equation}
Since $\mathbb{Z}[1/p \ell]^\times \mathbb{Q}_p^\times
\mathbb{Z}_\ell^\times = \mathbb{Q}_p^\times
\mathbb{Q}_\ell^\times$,
the RHS of \eqref{eq:strong-approx-consequences-prelim}
is unaffected by further reduction modulo
$\mathbb{Q}_\ell^\times$,
i.e.,
\begin{equation}\label{eq:strong-approx-consequences}
R[1/p]^\times \backslash B_p^\times/\mathbb{Q}_p^\times
=
R[1/p \ell]^\times \backslash B_p^\times B_\ell^\times / \mathbb{Q}_p^\times \mathbb{Q}_\ell^\times R_\ell^\times.
\end{equation}
\subsection{Deduction of Theorem
  \ref{thm:measure-classification}}
Let $p$ be a split prime for $B$.
Identify $B_p^\times  = \GL_2(\mathbb{Q}_p)$
and
$\mathbf{X} = \Gamma \backslash \GL_2(\mathbb{Q}_p)$
as in \S\ref{sec:intro}.
Let $\mu$ be a measure 
on $\mathbf{X}$
satisfying
the hypotheses of Theorem \ref{thm:measure-classification}.
It is invariant under the diagonal torus of $\GL_2(\mathbb{Q}_p)$,
which generates the latter modulo $\SL_2(\mathbb{Q}_p)$,
so to prove that $\mu$ is the uniform measure,
we need only verify that it is $\SL_2(\mathbb{Q}_p)$-invariant.
To that end, we apply Theorem \ref{thm:EL}:
Set
$G_1 := \PGL_2(\mathbb{Q}_p) = B_p^\times / \mathbb{Q}_p^\times
$, $G_2 := \PGL_2(\mathbb{Q}_\ell)
= B_\ell^\times / \mathbb{Q}_\ell^\times$, $G := G_1 \times G_2$.
Recall that
$\Gamma = R[1/p]^\times$.
Take for $\Delta$ the image of $R[1/p \ell]^\times$ in $G$.
By strong approximation in the form \eqref{eq:strong-approx-consequences},
we may identify
$\Gamma \backslash \GL_2(\mathbb{Q}_p) / \mathbb{Q}_p^\times$
with
$\Delta \backslash G / \PGL_2(\mathbb{Z}_\ell)$
and $\mu$ with a right $\PGL_2(\mathbb{Z}_\ell)$-invariant
measure $\nu$ on $\Delta \backslash G$.  
Our task is then to verify that $\nu$ is invariant by the image of $\SL_2(\mathbb{Q}_p)$.
Take for $A_1$ the diagonal torus in $G_1$
and for $\chi : A_1 \rightarrow \mathbb{Q}_p^\times$
the map $\chi(\diag(y_1,y_2)) := y_1/y_2$.
We have $C_{G_1}(A_1) = A_1$.
The group $M_1$ is trivial, hence
each $\{h \in M_1 \times G_2 : x h = x\}$ is trivial.  
The hypotheses
of Theorem \ref{thm:measure-classification} are satisfied,
so $\nu$ is invariant by some finite index subgroup $H_1$
of some semisimple algebraic subgroup of $G_1$ (of $\mathbb{Q}_p$-rank one) that
contains $A_1$.  The smallest such $H_1$ is the image of $\SL_2(\mathbb{Q}_p)$,
so we conclude.

\section{Recurrence\label{sec:recurrence}}
\label{sec-4}
In this section we formulate and verify the $T_\ell$-recurrence
hypothesis required by Theorem \ref{thm:measure-classification}.
The argument here is as in
\cite[\S8]{MR2195133} except that we allow general central
characters; for completeness, we record a proof of the key
estimate in that case.  The proof is simple; a key insight of
Lindenstrauss \cite{MR2195133} is that the condition enunciated
here is useful for the present purposes.

\subsection{Hecke operators}\label{sec:definition-t_n}
For a positive integer $n$ coprime to $p$, the Hecke operator
$T_n \in \End(\mathcal{A}(\mathbf{X}))$ is defined by
$T_n \varphi(x) := \sum_{\alpha \in M_n/M_1} \varphi(\alpha^{-1}
x)$,
where $M_n := R[1/p] \cap \nr^{-1}(n \mathbb{Z}[1/p]^\times)$,
so that $M_1 = \Gamma$.  These operators commute with one
another and also with $\rho_{\reg}(G)$.
If $\ell \mid \disc B$, then
the operator $T_\ell$ is an involution modulo the action of the
center; otherwise, it is induced by a correspondence of degree
$\ell+1$.
For $m \in \mathbb{Q}^\times$, abbreviate
$z(m) := \rho_{\reg}(z(m))$
(see \S\ref{sec:p-adic-microlocal} for notation).
The adjoint of $T_n$ is
$T_n^* = z(n) T_n$, and one has the composition formula
\begin{equation}\label{eq:hecke-multiplicativity}
  T_m T_n = \sum_{\substack{
      d \in \mathbb{Z}_{\geq 1} : \\
      d \mid \gcd (m,n),  \,     \gcd(d,\disc B)=1 
    }
  } d \cdot z(d^{-1}) T_{m n/d^2}.
\end{equation}

\subsection{Spherical averaging operators}\label{sec:spher-aver-oper}
Let $n$ be a positive integer coprime to $p$.
The operator $T_n$ on
$\mathcal{A}(\mathbf{X})$
is induced by the \emph{correspondence} on $\mathbf{X}$,
denoted also by
$T_n$,
given for $x \in \mathbf{X}$
by the multiset (i.e., formal sum) $T_n(x) := \sum_{s \in M_n/\Gamma} s^{-1}
x$.
Thus $T_n \varphi(x) = \sum_{y \in T_n(x)} \varphi(y)$.
Denote by $M_n^{\prim}$ the set of all \emph{primitive} elements
of $M_n$, i.e., those that are not divisible inside $R[1/p]$ by any divisor $d >
1$ of $n$.
Then $M_n^{\prim}$ is right-invariant by $\Gamma$,
and one has
$M_n = \bigsqcup_{d^2 \mid n}
  z(d) M_{n/d^2}^{\prim}$.
Denote by $S_n$ the ``Hecke sphere'' correspondence
$S_n (x) := \sum_{s \in M_n^{\prim}/\Gamma} s^{-1} x$;
it likewise induces an operator $S_n$
on $\mathcal{A}(\mathbf{X})$
given by
$S_n \varphi(x) := \sum_{s \in M_{n}^{\prim}/\Gamma}
\varphi(s^{-1} x) = \sum_{y \in S_n(x)} \varphi(y)$,
and one has
\begin{equation}\label{eq:decomposition-M-n-into-primitives}
  T_n(x) = \sum_{d^2 \mid n} z(d^{-1}) S_{n/d^2}(x).
\end{equation}

\subsection{Recurrence}
Let $\ell \neq p$ be a \emph{split prime}, that is to say,
a prime that splits the quaternion algebra underlying
the construction of $\Gamma$, so that the Hecke operator
$T_\ell$ has degree $\ell + 1$.
\begin{definition}
  A finite $Z$-invariant measure $\mu$ on $\mathbf{X}$
  is called \emph{$T_\ell$-recurrent}
  if for each Borel subset $E \subseteq \mathbf{X}$
  and $\mu$-almost every $x \in E$,
  there exist infinitely many positive integers $n$ 
  for which $S_{\ell^n}(x) \cap E \neq \emptyset$.
\end{definition}

\begin{theorem}[Hecke recurrence]\label{thm:recurrence}
  Let $\mu$ be any subsequential limit of a sequence of
  $L^2$-masses $\mu_\varphi$ of $L^2$-normalized
  automorphic forms $\varphi \in \pi  \in A_0(\mathbf{X})$.
  Then $\mu$ is $T_\ell$-recurrent.\footnote{It suffices
    to assume only that $\varphi$ is a $T_\ell$-eigenfunction.}
\end{theorem}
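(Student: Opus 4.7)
The plan is to follow the approach of Lindenstrauss \cite[\S8]{MR2195133}, reducing Hecke recurrence to an $L^2$ estimate for the spherical Hecke correspondences $S_{\ell^n}$ and carefully accounting for the possibly ramified central character $\chi_\pi$. First I would record a quantitative criterion: it suffices to show that for each nonnegative continuous $f \in \mathcal{A}(\mathbf{X})$ and each $\epsilon > 0$,
\[
\mu\bigl( \{x : f(x) > \epsilon,\ S_{\ell^n} f(x) < \tfrac{\epsilon}{2}\deg(S_{\ell^n}) \text{ for all } n \geq N\} \bigr) \longrightarrow 0
\]
as $N \to \infty$. Approximating $\chi_E$ by continuous functions in $L^1(\mu)$ and applying this criterion to the approximants gives Hecke recurrence in the sense stated.

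Since $\mu = \lim \mu_{\varphi_j}$ for $L^2$-normalized $T_\ell$-eigenfunctions $\varphi_j \in \pi_j \in A_0(\mathbf{X})$, I would reduce the criterion via Chebyshev to a uniform-in-$j$ estimate of the shape
\[
\sum_{n \geq N} \frac{1}{\deg(S_{\ell^n})^2} \int_{\mathbf{X}} \bigl| S_{\ell^n} f_0(x) \bigr|^2 |\varphi_j|^2\, dx \leq C(f,N),
\]
with $C(f,N) \to 0$ as $N \to \infty$, where $f_0 := f - \int_{\mathbf{X}} f$ (the constant part of $f$ produces the expected main term $\deg(S_{\ell^n}) \int f$). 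Expanding the square using the adjoint relation $S_{\ell^n}^* = z(\ell^n) S_{\ell^n}$ and the composition formula \eqref{eq:hecke-multiplicativity} converts the inner integral to a weighted combination of pairings $\langle S_{\ell^{2n-2j}} f_0, f_0 \cdot |\varphi_j|^2 \rangle$; after moving $S_{\ell^{2n-2j}}$ across once more and using the eigenfunction property, these reduce to scalar multiples of $\int |f_0|^2 |\varphi_j|^2$ times Hecke eigenvalues $\lambda_{\ell^k}(\pi_j)$.

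The core analytic input is a polynomial-in-$k$ times $\ell^{k/2}$ bound on $\lambda_{\ell^k}(\pi)$ that survives the presence of a nontrivial $\chi_\pi$. Tracking $\chi_\pi$ through \eqref{eq:hecke-multiplicativity} for $\ell \nmid \disc B$ yields the three-term recursion
\[
\lambda_{\ell^{k+1}}(\pi) = \lambda_\ell(\pi)\,\lambda_{\ell^k}(\pi) - \ell\, \chi_\pi(\ell)^{-1} \lambda_{\ell^{k-1}}(\pi),
\]
and unitarity of $\pi$ provides both $|\chi_\pi(\ell)|=1$ and the trivial bound $|\lambda_\ell(\pi)| \leq 2\ell^{1/2}$. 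By induction on $k$ these imply
\[
|\lambda_{\ell^k}(\pi)| \leq (k+1)\,\ell^{k/2},
\]
exactly as in the trivial central character case. Combined with $\deg(S_{\ell^n}) \asymp \ell^n$, this gives normalized eigenvalues of size $O((n+1)\ell^{-n/2})$, which is square-summable and closes the $L^2$ estimate.

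The main obstacle is precisely the bookkeeping associated with allowing a ramified $\chi_\pi$: one must check that the character factor $\chi_\pi(\ell)^{-1}$ in the recursion has modulus one (so the induction giving $|\lambda_{\ell^k}(\pi)| \leq (k+1)\ell^{k/2}$ goes through unchanged) and that the twisted adjoint $S_{\ell^n}^* = z(\ell^n) S_{\ell^n}$ is handled consistently when integrating against $|\varphi_j|^2$, where the central twist $z(\ell^n)$ acts by $\chi_{\pi_j}(\ell^n)$ and contributes only unimodular phases. Once these observations are in place, the remainder of the argument proceeds as in \cite[\S8]{MR2195133}: the summable $L^2$ bound forces exceptional sets where $S_{\ell^n}(x) \cap E = \emptyset$ for all large $n$ to have $\mu$-measure zero, yielding $T_\ell$-recurrence.
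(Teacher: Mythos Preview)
There is a genuine gap. In the key step you apply the Hecke correspondences to the observable $f_0$ and then claim that the pairings $\langle S_{\ell^{k}} f_0,\, f_0\,|\varphi_j|^2 \rangle$ reduce, ``using the eigenfunction property,'' to $\lambda_{\ell^k}(\pi_j)\int |f_0|^2 |\varphi_j|^2$. But the fact that $\varphi_j$ is a $T_\ell$-eigenfunction does not make $f_0 |\varphi_j|^2$ one; the adjoint of $S_{\ell^k}$ is taken with respect to Haar measure, and $S_{\ell^k}^*(f_0 |\varphi_j|^2)(x)$ is a sum of $f_0(y)|\varphi_j(y)|^2$ over $y$ in the adjoint sphere, which cannot be rewritten as $f_0(x)$ times any expression in $\lambda_{\ell^k}(\pi_j)$. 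The same obstruction already blocks the preceding step of moving one copy of $S_{\ell^n}$ across in $\int |S_{\ell^n} f_0|^2 |\varphi_j|^2$: the Hecke operator does not respect the product structure, so you never arrive at $\langle S_{\ell^{2n-2j}} f_0, f_0 |\varphi_j|^2\rangle$ in the first place. (A secondary slip: $|\lambda_\ell(\pi)|\leq 2\ell^{1/2}$ is the Ramanujan bound, not a consequence of unitarity; unitarity alone gives only $|\lambda_\ell|\leq \ell+1$.)

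The paper's argument proceeds in the opposite direction: it applies the Hecke correspondences to $\varphi$, where the eigenfunction property is genuinely available. Cauchy--Schwarz gives $|S_{\ell^k}\varphi(x)|^2 \leq \deg(S_{\ell^k})\sum_{y\in S_{\ell^k}(x)}|\varphi(y)|^2$, and since $\varphi$ \emph{is} an eigenfunction the left side equals $\ell^k|\sigma_k|^2|\varphi(x)|^2$ for a normalized eigenvalue $\sigma_k$. Summing over $k\leq n$ yields a pointwise lower bound $\sum_{k\leq n}\sum_{y\in S_{\ell^k}(x)}|\varphi(y)|^2 \gg c_{\pi,\ell}(n)\,|\varphi(x)|^2$, and the analytic core is the uniform estimate $c_{\pi,\ell}(n)\gg n$, proved via temperedness (Satake parameters $\alpha,\beta$ on the unit circle) and a dichotomy on $|\alpha-\beta|$. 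It is this pointwise inequality for $|\varphi|^2$, not an $L^2$ estimate against the observable, that feeds into the measure-theoretic reduction from \cite[\S8]{MR2195133}.
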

The proof of Theorem
\ref{thm:recurrence}
reduces via measure-theoretic considerations as in 
\cite{MR2195133, MR3260861}
to that of the following:

\begin{lemma}
  There exists $c_0 > 0$
  so that for each split prime $\ell$
  and $\varphi \in \pi \in A_0(\mathbf{X})$ and $x \in
  \mathbf{X}$,
  one has
  $\sum_{k \leq n} \sum_{y \in S_{\ell^k}(x)} |\varphi(y)|^2 \geq c_0 n |\varphi(x)|^2$.
\end{lemma}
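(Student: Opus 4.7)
The plan is to reduce the desired inequality to a universal trigonometric lower bound via Cauchy--Schwarz applied pointwise to the sphere operator $S_{\ell^k}$. Since the split prime $\ell$ does not divide $\disc B$, Hecke multiplicativity \eqref{eq:hecke-multiplicativity} together with the relation $T_{\ell^k} = S_{\ell^k} + z(\ell^{-1}) T_{\ell^{k-2}}$ (a direct consequence of \eqref{eq:decomposition-M-n-into-primitives}) propagates the $T_\ell$-eigenvalue equation $T_\ell \varphi = \lambda \varphi$ to $S_{\ell^k}\varphi = \nu_k \varphi$ for every $k \geq 0$. Writing $\omega := \chi_\pi(\ell) \in S^1$ (unitary, since $\pi$ is automorphic) and parametrising $\lambda = 2\sqrt{\ell/\omega} \cos\theta$ for some $\theta \in \mathbb{C}$, a short Chebyshev computation yields
\[
\frac{|\nu_k|^2}{\ell^k} = \bigl| U_k(\cos\theta) - \ell^{-1} U_{k-2}(\cos\theta) \bigr|^2, \qquad U_k(\cos\theta) := \frac{\sin((k+1)\theta)}{\sin \theta}.
\]

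Pointwise Cauchy--Schwarz yields $|\nu_k|^2 |\varphi(x)|^2 = |S_{\ell^k}\varphi(x)|^2 \leq |S_{\ell^k}(x)| \sum_{y \in S_{\ell^k}(x)} |\varphi(y)|^2$; since $|S_{\ell^k}(x)| = \ell^{k-1}(\ell+1) \leq 2\ell^k$, summing over $k \leq n$ reduces the lemma to the absolute estimate
\[
\sum_{k=1}^n \frac{|\nu_k|^2}{\ell^k} \geq c \cdot n, \qquad c > 0 \text{ universal.}
\]
A sum-to-product identity rewrites this as
\[
\frac{|\nu_k|^2}{\ell^k} = \frac{\bigl| A \sin(k\theta) + B \cos(k\theta) \bigr|^2}{\sin^2\theta}, \qquad A := (1-\ell^{-1})\cos\theta,\; B := (1+\ell^{-1})\sin\theta,
\]
with $A^2 + B^2 \geq (1 - \ell^{-1})^2 \geq 1/4$ for $\ell \geq 2$.

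One verifies this trigonometric bound by splitting according to the size of $\theta$. In the tempered case $\theta \in \mathbb{R}$, when $n |\sin\theta| \gg 1$ the Weyl cancellation $\bigl|\sum_{k \leq n} e^{2ik\theta}\bigr| \ll |\sin\theta|^{-1}$ produces $\sum_k |\nu_k|^2/\ell^k \geq (n+1)(A^2+B^2)/(2\sin^2\theta) - O(|\sin\theta|^{-3}) \gtrsim n$, whereas in the remaining regime $n |\sin\theta| \lesssim 1$ the small-angle expansion $U_k(\cos\theta) \sim k+1$ forces $|\nu_k|^2/\ell^k \gtrsim k^2$ and hence $\sum_k \gtrsim n^3 \gg n$. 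In the complementary-series case ($\theta$ with nonzero imaginary part, the only remaining possibility compatible with $|\omega| = 1$), the $|U_k|$ grow exponentially in $k$ and the bound is automatic. The main obstacle is this uniformity of $c_0$ as $\theta$ approaches the degenerate values $0, \pi$: it is overcome by exploiting the super-linear growth of the sphere eigenvalues in the small-$\sin\theta$ regime, which more than compensates for the shrinking denominator.
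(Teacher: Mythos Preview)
Your argument is correct and follows essentially the same path as the paper: Cauchy--Schwarz on the sphere operator reduces to a lower bound $\sum_{k\le n}|\nu_k|^2/\ell^k \gg n$ on the normalized sphere eigenvalues, which is then established by a case split on whether $n|\sin\theta|$ (equivalently $n|\alpha-\beta|$ in the paper's Satake-parameter notation) is large or small, with the large case handled by geometric-series cancellation and the small case by the growth $|\nu_k|^2/\ell^k \asymp k^2$.

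One tactical difference is worth noting. In the small-angle regime the paper augments $\sum_k|\sigma_k|^2$ with the auxiliary quantity $\max_{m\le n}|\lambda_\pi(\ell^m)|^2$ (coming from a second application of Cauchy--Schwarz to the full Hecke operator $T_{\ell^m}$ together with the containment $T_{\ell^m}(x)\subseteq\bigcup_{k\le m}S_{\ell^k}(x)$ up to central shifts) and extracts the bound from a single large Hecke eigenvalue. You instead stay with the sphere eigenvalues throughout and sum the contributions $k^2$, obtaining $n^3\gg n$; this is marginally cleaner since it avoids the detour through $T_{\ell^m}$. Conversely, the paper invokes Eichler--Shimura--Igusa up front to dispense with the complementary-series case, whereas you carry it along (correctly, if tersely); in the present setting that case is in fact vacuous.
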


\begin{proof}
  By a theorem of Eichler, Shimura and Igusa,
  $\pi$ is tempered,\footnote{
    As in the references, the non-tempered case may be treated
    more simply.}
  hence there exist $\alpha, \beta \in \mathbb{C}^{(1)}$ (the
  Satake parameters)
  so that $\lambda_\pi(\ell) = \alpha + \beta$;
  one then has more generally for $n \in \mathbb{Z}_{\geq 1}$
  that
  \begin{equation}\label{eq:hecke-eigenvalue-formula}
    \lambda_\pi(\ell^n) = \frac{\alpha^{n+1} - \beta^{n+1}}{\alpha - \beta }.
  \end{equation}
  By \eqref{eq:decomposition-M-n-into-primitives}, one has
  $T_{\ell^n} = \sum_{k \leq n : k \equiv n(2)}
  z(\ell^{(k-n)/2}) S_{\ell^k}$.
  Conversely,
  $S_{\ell^k} = T_{\ell^k} - 1_{k \geq 2} z(\ell^{-1}) T_{\ell^{k-2}}$.
  Since $\pi$ has a unitary central character,
  there is $\theta \in \mathbb{C}^{(1)}$
  so that $z(\ell^{-1}) \varphi = \theta \varphi$
  for all $\varphi \in \pi$.
  Thus, denoting by $\ell^{k/2} \sigma_k \in \mathbb{C}$
  the scalar by which $S_{\ell^k}$ acts on $\pi$,
  one obtains
  $\sigma_k = \lambda(\ell^k) - 1_{k \geq 2}
  \theta \ell^{-1} \lambda(\ell^{k-2})$,
  which expands for $k \geq 2$ to
  \begin{equation}\label{eq:sigma-k-via-satake-parameters}
    \sigma_k
    =
    \frac{\gamma_1 \alpha^{k} - \gamma_2 \beta^{k}}{
      \alpha - \beta }.
  \end{equation}
  with $\gamma_1 := \alpha - \theta \ell^{-1} \alpha^{-1},
  \gamma_2 :=  \beta - \theta \ell^{-1} \beta^{-1}$.
  Note that $|\gamma_1|, |\gamma_2| \geq 1/2$.

  We turn to the main argument.
  For $m, k \in \mathbb{Z}_{\geq 0}$,
  Cauchy--Schwarz gives
  \begin{align*}
    \ell^m |\lambda_\pi(\ell^{m}) \varphi(x)|^2
    &= | T_{\ell^{m}} \varphi(x)|^2
      \leq (1 + \ell^{-1}) \ell^{m}
      \sum_{y \in T_{\ell^m}(x)} |\varphi(y)|^2,
    \\
    \ell^k|\sigma_{k} \varphi(x)|^2
    &= |S_{\ell^k} \varphi(x)|^2
      \leq (1 + \ell^{-1})\ell^{k}
      \sum_{y \in S_{\ell^k}(x)}
      |\varphi(y)|^2,
  \end{align*}
  whence by \eqref{eq:decomposition-M-n-into-primitives}
  that
  $\sum_{k \leq n} \sum_{y \in S_{\ell^k}(x)} |\varphi(y)|^2
  \gg
  |\varphi(x)|^2
  c_{\pi,\ell}(n)$
  with
  \begin{equation}\label{eq:defn-c-pi-n}
    c_{\pi,\ell}(n)
    :=
    \sum_{k \leq n}
    |\sigma_{k}|^2 
    + \max_{m \leq n} |\lambda_\pi(\ell^{m})|^2.
  \end{equation}
  Our task thereby reduces to verifying that
  $c_{\pi,\ell}(n) \gg n$, uniformly in $\pi$ and
  (unimportantly) $\ell$.  Suppose this estimate fails.
  Then there is a sequence of integers $j \rightarrow \infty$
  and tuples $(\pi,n,\ell) = (\pi_j,n_j,\ell_j)$ as above,
  depending upon $j$, so that
  $n \rightarrow \infty$ as $j \rightarrow \infty$ and
  $c_{\pi,\ell}(n) = o(n)$.  Here asymptotic notation refers to
  the $j \rightarrow \infty$ limit, and for quantities
  $A,B = A_j, B_j$ depending (implicitly) upon $j$, we write
  $A \ll B$ for
  $\limsup_{j \rightarrow \infty} |A_j/B_j| < \infty$ and
  $A \lll B$ or $A = o(B)$ for
  $\limsup_{j \rightarrow \infty} |A_j/B_j| = 0$; the notations
  $A \gg B$ and $A \ggg B$ are defined symmetrically.  We shall
  derive from this supposition a contradiction.  By passing to
  subsequences, we may consider separately cases in which the
  Satake parameters $\alpha,\beta$ of $\pi$, as defined above,
  satisfy (i) $|\alpha - \beta| \ggg 1/n$ or (ii)
  $|\alpha - \beta| \ll 1/n$.\footnote{ The standard argument
    considers cases for which
    $|\alpha - \beta| \gg 1/n$ and $|\alpha - \beta| \lll 1/n$.
    We have found the present division slightly more efficient.
  } In case (i), we have
  $|1 - \alpha \overline{\beta}|^{-1} \lll n$, and so upon
  expanding the square and summing the geometric series,
  \[
  c_{\pi,\ell}(n) \geq 
  \sum_{k \leq n} |\sigma_{k}|^2
  =
  \frac{|\gamma_1|^2 n + |\gamma_2|^2 n + o(n)}{|\alpha - \beta|^2}
  \geq
  \frac{n/3}{|\alpha - \beta|^2}
  \gg n.
  \]
  In case (ii),
  one has
  $|\alpha - \beta|^{-1}/10 \gg n$,
  so the largest positive integer $m \leq n$
  for which $m |\alpha - \beta| < 1/10$
  satisfies $m \gg n$, and \eqref{eq:hecke-eigenvalue-formula}
  gives
  $c_{\pi,\ell}(n) \geq |\lambda_\pi(\ell^{m})|^2 \gg m^2 \gg n^2 \geq n$.
  In either case, we derive the required contradiction.
\end{proof}

\section{Positive entropy\label{sec:pos-ent}}
\label{sec-5}
In this section we verify the entropy hypothesis required by
Theorem \ref{thm:measure-classification}.  The basic ideas here
are due to Bourgain--Lindenstrauss \cite{MR1957735} following
earlier work of Rudnick--Sarnak \cite{MR1266075} and Lindenstrauss \cite{MR1859345}
and followed by later developments of Silberman--Venkatesh \cite{SV-AQUE}  and
Brooks--Lindenstrauss \cite{MR3260861}.  Those works dealt with archimedean
aspects;
the present $p$-adic adaptation
is 
obtained by replacing the role played by the discreteness
of $\mathbb{Z}$ in $\mathbb{R}$ with that of
$\mathbb{Z}[1/p]$ in $\mathbb{R} \times \mathbb{Q}_p$.
We also give a
new formulation of the basic line of attack (Lemma
\ref{lem:geom-ampl}) emphasizing convolution over covering
arguments
(compare with \cite[Lemma 3.4]{SV-AQUE}),
which may be of use in other contexts.

Call $\eps > 0$ \emph{admissible} if it belongs to the image of $|.| :
\mathbb{Q}_p^\times \rightarrow \mathbb{R}_+^\times$.
For a compact open subgroup $C$ of $\mathbb{Q}_p^\times$
and admissible $\eps > 0$ 
set
\[
B(C,\eps) := \{\begin{pmatrix}
  a  & b \\
  c & d
\end{pmatrix} \in G : a,d \in C, |b|, |c| \leq \eps\}.
\]
We refer to \cite[\S8]{MR2459293} for definitions and basic
facts concerning measure-theoretic entropy.  As in
\cite[\S8]{MR2459293} or \cite{MR1957735} or \cite[Thm 6.4]{SV-AQUE}, the
following criterion suffices:
\begin{theorem}[Positive entropy on almost every ergodic component]\label{thm:pos-ent}
  For each compact subset $\Omega$ of $G$ there exists
  $C$ as above and $C_1, c_2 > 0$ so that for all admissible $\eps \in (0,1)$, all
  $L^2$-normalized $\varphi \in \pi \in  A_0(\mathbf{X})$,
  and all $x \in \Omega$, one has
  $\mu_\varphi(x B(C,\eps)) \leq C_1 \eps^{c_2}$.
\end{theorem}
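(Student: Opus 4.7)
My plan is to adapt the Bourgain--Lindenstrauss Hecke-amplification scheme \cite{MR1957735} to the $p$-adic setting, the role played there by the discreteness of $\mathbb{Z} \subset \mathbb{R}$ being taken over here by the discreteness of $\mathbb{Z}[1/p]$ in $\mathbb{R}\times\mathbb{Q}_p$. Fix an auxiliary split prime $\ell \neq p$ and work with the spherical Hecke operators $S_{\ell^k}$ of \S\ref{sec:spher-aver-oper}, which act on the eigenfunction $\varphi$ by the scalars $\ell^{k/2}\sigma_k$ of \eqref{eq:sigma-k-via-satake-parameters}. The argument proceeds through a geometric amplification lemma, as hinted by the reference to Lemma \ref{lem:geom-ampl} in the text.

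Applying $S_{\ell^k}$ to the truncated eigenfunction $\mathbf{1}_E \varphi$ for $E := xB(C,\eps)$, expanding the squared sum of Hecke translates via Cauchy--Schwarz, and invoking the eigenvalue relation yields an estimate of the shape
\[
\Bigl(\sum_{k\leq n}\ell^{k}|\sigma_k|^{2}\Bigr)\mu_\varphi(E)\;\leq\; D(n,E,x)\cdot \mu_\varphi(E),
\]
where $D(n,E,x)$ counts the pairs $(\gamma_1,\gamma_2) \in (M_{\ell^{k_1}}^{\prim}\times M_{\ell^{k_2}}^{\prim})/\Gamma$ with $k_1,k_2 \leq n$ for which $\gamma_1 E \cap \gamma_2 E \neq \emptyset$; equivalently, it counts elements $\delta = \gamma_1^{-1}\gamma_2 \in R[1/p]$ with $\nr(\delta) \in \ell^{[-2n,2n]}$ and $x^{-1}\delta x \in B(C,\eps)B(C,\eps)^{-1}$. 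Temperedness of $\pi$ and the dichotomy of \S\ref{sec:recurrence} force the left-hand coefficient to grow at least like $n\cdot\ell^{n}$, so it will suffice to establish $D(n,E,x) \ll n\ell^{n(1-c)}$ for some absolute $c > 0$, and then to choose $n \asymp \log(1/\eps)/\log\ell$.

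The geometric heart is then the counting of such $\delta$. A suitable choice of the compact open subgroup $C \subset \mathbb{Q}_p^\times$, depending only on $\Omega$, ensures that the conjugation $g \mapsto x^{-1}gx$ does not distort $B(C,\eps)$-type sets uniformly for $x \in \Omega$; this localizes the $p$-adic component of $\delta$ into an explicit $\eps$-neighborhood of the diagonal torus inside $\GL_2(\mathbb{Z}_p)$. On the other hand, since $B$ is definite and $|\nr(\delta)|_p \in \ell^{[-2n,2n]}$, the product formula confines the archimedean embedding of $\delta$ to a ball of radius polynomial in $\ell^n$. The discreteness of $R[1/p]$ in $B_\infty \times B_p$ then yields a bound of the shape $D(n,E,x) \ll \ell^{2n}\eps^{c'} + O(n)$ for some $c' > 0$, the $O(n)$ absorbing the central or near-central $\delta$ that survive $\eps \to 0$.

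The main obstacle will be extracting from this counting bound the correct simultaneous dependence on $\eps$ and $\ell^n$ so that optimizing $n$ genuinely yields a positive exponent $c_2$; this is precisely the delicate balancing of amplification strength against permissible overlap that is the hallmark of the Bourgain--Lindenstrauss method. The allowance of nontrivial central character (absent from \cite{MR1957735, MR3260861}) introduces a small additional wrinkle in controlling the averaged size of $|\sigma_k|^2$, but this is covered by the temperedness argument of \S\ref{sec:recurrence}, while uniformity in $x \in \Omega$ is absorbed into the single uniform choice of $C$. The remaining formal execution should then parallel \cite[\S8]{MR2195133} and \cite{MR3260861} once the $p$-adic geometric amplification lemma and its arithmetic counting input are in place.
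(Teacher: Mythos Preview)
Your displayed inequality
\[
\Bigl(\sum_{k\leq n}\ell^{k}|\sigma_k|^{2}\Bigr)\mu_\varphi(E)\;\leq\; D(n,E,x)\cdot \mu_\varphi(E)
\]
is vacuous as written since $\mu_\varphi(E)$ cancels; presumably you intended a square on one side. More seriously, the claimed lower bound ``the left-hand coefficient grows at least like $n\cdot\ell^{n}$'' is false, and this is where your argument breaks. The recurrence estimate of \S\ref{sec:recurrence} gives only $\sum_{k\leq n}|\sigma_k|^2 + \max_{m\leq n}|\lambda_\pi(\ell^m)|^2 \gg n$ for the \emph{unweighted} sum. The $\ell^k$-weighted sum you need is dominated by its last few terms, and nothing prevents $\sigma_n, \sigma_{n-1}, \ldots$ from being individually $o(1)$; indeed, by temperedness $|\sigma_k| = O(1)$, so $\sum_{k\leq n}\ell^k|\sigma_k|^2 = O(\ell^n)$, never $\gg n\ell^n$. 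Amplifying with powers of a single fixed prime therefore does not deliver the lower bound your scheme requires.

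The paper avoids this by amplifying over \emph{many} primes $q \in [L,2L]$ together with their squares, using Iwaniec's trick $|\lambda_\pi(q)|^2 + |\lambda_\pi(q^2)| \geq 1$ to force the amplifier eigenvalue $\lambda \asymp 1$ unconditionally. The geometric input (Lemma~\ref{lem:hecke-returns}) is also sharper than the lattice-point bound $\ell^{2n}\eps^{c'} + O(n)$ you propose: for $n < \sqrt{1/2}\,\eps^{-1}$ the paper shows that any two elements of $M_n \cap z(m)xB(C,\eps)x^{-1}$ must \emph{commute} (via a trace argument exploiting the definiteness of $B$ and the discreteness of $\mathbb{Z}[1/p]$ in $\mathbb{R}\times\mathbb{Q}_p$), hence lie in a single imaginary quadratic order, giving an $O(1)$ bound. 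These two ingredients combine via the convolution inequality of Lemma~\ref{lem:geom-ampl} rather than the covering argument you sketch.
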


Given $\Omega$, we choose for $C$ any open subgroup of $\mathfrak{o}^\times$
with the property that
for small enough $\eps$,
one has
\begin{equation}\label{eq:small-enough-in-K}
  \text{$x B(C,\eps) x^{-1} \subseteq K$
    for all $x \in \Omega$},
\end{equation}
\begin{equation}\label{eq:small-enough-wrt-units}
  \text{$g B(C,\eps) g^{-1} \cap \Gamma = \{1\}$ for all $g \in G$}
\end{equation}
the latter being possible because $B$ is non-split.
We now state two independent lemmas,
prove Theorem \ref{thm:pos-ent}
assuming them, and then prove the lemmas.
\begin{lemma}[Bounds for Hecke returns]\label{lem:hecke-returns}
  For all admissible
  $\eps \in (0,1)$, all $n \in \mathbb{Z}_{\geq 1}$ coprime to $p$ and satisfying
  $n < \sqrt{1/2} \eps^{-1}$,
  all $m \in \mathbb{Q}^\times$ with numerator and denominator
  coprime to $p$,
  and all $x \in \Omega$,
  the set $S := M_n \cap z(m) x B(C,\eps) x^{-1}$ has cardinality $\# S \leq 6 \prod_{p^k || n} (k + 1)$.  In particular, $\# S \leq 2^{13}$ if $n$ has at most $10$ prime divisors counted with multiplicity.
\end{lemma}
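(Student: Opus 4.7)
The plan is to reduce to the case of primitive elements via the standard decomposition of $M_n$, then to bound the primitive case by constraining the ratio of two such elements inside the norm-one group of $B$ using the hypothesis $n\eps<\sqrt{1/2}$ together with integrality.

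Applying the decomposition $M_n = \bigsqcup_{d^2 \mid n} z(d)\, M_{n/d^2}^{\prim}$ recalled in \S\ref{sec:spher-aver-oper} yields
\[
\#S = \sum_{d : d^2 \mid n} \#\bigl(M_{n/d^2}^{\prim} \cap z(m/d)\, x B(C,\eps) x^{-1}\bigr).
\]
Since $\#\{d \geq 1 : d^2 \mid n\} = \prod_{p^k \| n}(\lfloor k/2 \rfloor + 1) \leq \prod_{p^k \| n}(k+1)$, it suffices to bound each primitive summand by the universal constant $6$.

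Fix such a $d$ and abbreviate $n' := n/d^2$ and $m' := m/d$; note that $m'$ remains a $p$-adic unit. I would first pin down the reduced norm of any $s = z(m')\,xbx^{-1}$ in the primitive summand. Since $xB(C,\eps)x^{-1} \subseteq K$ by \eqref{eq:small-enough-in-K}, one has $s \in z(m') K$, so $\nr s \in (m')^2 \mathbb{Z}_p^\times$, whence $\nu_p(\nr s) = 0$. Combined with $\nr s \in n' \mathbb{Z}[1/p]^\times$ and the positivity of $\nr$ on the definite quaternion algebra $B$, this forces $\nr s = n'$, so in particular $s \in R[1/p] \cap R_p = R$. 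Next, for any two elements $s_1, s_2$ in the primitive summand, centrality of $z(m')$ and the identity $\det(b_i) = n'/(m')^2$ give the clean formula
\[
\bar{s_1} s_2 = n' \cdot x\bigl(b_1^{-1} b_2\bigr)x^{-1} \in R,
\]
an integral element of reduced norm $(n')^2$ lying in the small set $n' \cdot x B(C,\eps) x^{-1}$.

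The main obstacle is to show that the number of such integral elements is at most $6$. The hypothesis $n\eps<\sqrt{1/2}$ forces the off-diagonal entries of $x^{-1}(\bar{s_1} s_2)x/n' = b_1^{-1} b_2$ to have $p$-adic absolute value strictly below $1$, so that $\bar{s_1} s_2$ is diagonal modulo the maximal ideal of $R_p$ after conjugation. Together with the archimedean bound $|\bar{s_1} s_2|_\infty = n'$ (from $\nr = (n')^2$ and definiteness) and the quadratic identity $u^2 - \tr(u)\, u + (n')^2 = 0$ satisfied by $u := \bar{s_1}s_2$, I would restrict the admissible integer traces $\tr(u)$ into a narrow window around $2n'$ and then count directly the resulting $u \in R$, the universal bound $6$ reflecting bounded torsion of unit groups of orders in $B$ together with a short lattice-point calculation on the sphere of radius $n'$ in $B_\infty$.

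Granting the primitive bound, summing over $d^2 \mid n$ yields $\#S \leq 6\prod_{p^k \| n}(k+1)$. The secondary assertion $\#S \leq 2^{13}$ when $n$ has at most $10$ prime divisors counted with multiplicity is then immediate, since $\prod_{p^k \| n}(k+1) \leq 2^{10}$ and $6 \cdot 2^{10} = 6144 < 2^{13} = 8192$.
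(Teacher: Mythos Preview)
Your decomposition into primitive summands is fine, and the initial observation that every $s$ in a summand satisfies $\nr(s)=n'$ and $s\in R$ is correct. The gap is in the ``main obstacle'' paragraph: you assert that each primitive summand has at most $6$ elements, but the sketch you give does not prove it. The difficulty is that the quantity you form, $u=\bar{s_1}s_2=n'\,x(b_1^{-1}b_2)x^{-1}$, carries only the weak $p$-adic constraint $b_1^{-1}b_2\in B(C,\eps)$. This bounds the off-diagonal entries by $\eps$, but leaves the diagonal entries of $b_1^{-1}b_2$ free to range over $C$; in particular $\tr(u)/n'=a+d$ is an arbitrary element of $C+C$ and there is no mechanism forcing it into a ``narrow window around $2$'' of $p$-adic size comparable to $\eps^2$ (recall $C$ is fixed with $\Omega$, independently of $\eps$ and $n$). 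Without such a constraint, the product-formula argument for the integer $\tr(u)-2n'$ does not go through, and the invocation of ``bounded torsion of unit groups of orders in $B$'' is premature since you have not placed the $s_i$ inside any commutative subring.

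The paper's argument avoids this by working with the \emph{commutator} $u=s\,t\,s^{-1}t^{-1}$ of two elements of $S$ rather than with $\bar{s_1}s_2$. Since both off-diagonal entries of each element of $z(m)B(C,\eps)$ have absolute value at most $\eps$, the product $n^2u=s\,t\,s^\iota t^\iota$ has diagonal congruent to $(n^2,n^2)$ modulo $\eps^2$, regardless of $C$; this yields $|\tr(u)-2|_p\le\eps^2$, and combined with $n^2\tr(u)\in\mathbb{Z}$, $|\tr(u)|_\infty\le 2$, and the hypothesis $2n^2\eps^2<1$, one gets $\tr(u)=2$, hence $u=1$. Thus all of $S$ lies in a single imaginary quadratic order $\mathcal{O}\subset R$, and the bound $\#S\le\#\mathcal{O}(n)\le 6\prod_{p^k\|n}(k+1)$ follows from the standard count of norm-$n$ elements in $\mathcal{O}$. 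No decomposition by primitivity is needed, and the factor $\prod(k+1)$ arises from ideal-counting in $\mathcal{O}$ rather than from counting divisors $d^2\mid n$.
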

\begin{lemma}[Geometric amplification]\label{lem:geom-ampl}
  Let $(c_\ell)_{\ell \in \mathbb{Z}_{\geq 1}}$
  be a finitely-supported sequence of scalars.
  Set
  $T := \sum_{\ell}
  c_{\ell} T_\ell / \sqrt{\ell}$
  and
  $T^a := \sum_{\ell}
  |c_\ell| T_\ell^* / \sqrt{\ell}$.
  Let $\varphi \in \mathcal{A}(\mathbf{X})$,
  $\psi, \nu \in C_c^\infty(G)$.
  Define $\Psi \in \mathcal{A}(\mathbf{X})$ by
  $\Psi(g) := \sum_{\gamma \in \Gamma} |\psi|(\gamma g)$
  and $\psi \ast \nu \in C_c^\infty(G)$
  by $\psi \ast \nu(x) := \int_{y \in G}
  \psi(x y) \nu(y)$.
  Then
  \[
  \|T \varphi (\psi \ast \nu) \|_{L^2(G)}
  \leq
  \|\varphi\|_{L^2(\mathbf{X})}
  \|T^a \Psi\|_{L^2(\mathbf{X})}
  \|\nu\|_{L^2(G)}.
  \]
\end{lemma}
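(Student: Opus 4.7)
The plan is to begin by unfolding the $L^2(G)$-norm using the $\Gamma$-invariance of $T\varphi$, obtaining the identity
\[
\|T\varphi(\psi\ast\nu)\|_{L^2(G)}^2 \;=\; \int_{\mathbf{X}} |T\varphi(x)|^2 \, P(x)\,dx,
\qquad P(x) := \sum_{\gamma \in \Gamma} |(\psi\ast\nu)(\gamma x)|^2.
\]
The next step is to bound $P$ pointwise. Applying Cauchy--Schwarz in the $y$-variable defining the convolution $\psi\ast\nu$ with the weight $|\psi(gy)|$ gives $|(\psi\ast\nu)(g)|^2 \leq \|\psi\|_{L^1(G)}\cdot(|\psi|\ast|\nu|^2)(g)$. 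Summing over $\gamma \in \Gamma$ and substituting $x \mapsto xy^{-1}$ then identifies $P(x)$ with an $|\nu|^2$-weighted integral of right-translates of $\Psi$. Together with the right-translation-invariance of $\|\varphi\|_{L^2(\mathbf{X})}$, this reduces the task to bounding, uniformly in $y$, the quantity $\int_{\mathbf{X}} |T\varphi_y|^2 \, \Psi\,dx$, where $\varphi_y := \rho(y^{-1})\varphi$.

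For the remaining integral, I would apply the pointwise estimate $|T\varphi_y|^2 \leq (T^{\mathrm{abs}}|\varphi_y|)^2$, where $T^{\mathrm{abs}} := \sum_\ell (|c_\ell|/\sqrt{\ell})\,T_\ell$; this uses the triangle inequality and the fact that each $T_\ell$ is a positive operator. The crux of the argument is then the adjoint identity $(T^{\mathrm{abs}})^* = T^a$, which permits one to transfer the Hecke action from $\varphi$ to the weight $\Psi$ via
\[
\int_{\mathbf{X}}(T^{\mathrm{abs}}|\varphi_y|)^2 \,\Psi\,dx \;=\; \int_{\mathbf{X}} (T^{\mathrm{abs}}|\varphi_y|)\cdot\bigl((T^{\mathrm{abs}}|\varphi_y|)\Psi\bigr)\,dx \;=\; \int_{\mathbf{X}} |\varphi_y|\cdot T^a\bigl((T^{\mathrm{abs}}|\varphi_y|)\Psi\bigr)\,dx.
\]
Cauchy--Schwarz then extracts the factor $\|\varphi_y\|_{L^2(\mathbf{X})} = \|\varphi\|_{L^2(\mathbf{X})}$, while a further pointwise Cauchy--Schwarz on the $T^a$-sum of the form $T^a(fg)(x) \leq \sqrt{T^a f^2(x)}\sqrt{T^a g^2(x)}$ converts the residual expression into something controlled by $\|T^a\Psi\|_{L^2(\mathbf{X})}$.

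The main obstacle is the careful accounting of prefactors: the application of Cauchy--Schwarz in the convolution variable introduces a factor of $\|\psi\|_{L^1(G)}$, which must be reconciled with the target bound involving only $\|T^a\Psi\|_{L^2(\mathbf{X})}$. The key observation to overcome this is that $\|\Psi\|_{L^1(\mathbf{X})} = \|\psi\|_{L^1(G)}$ by unfolding, combined with the positivity of $T^a$ and the identity $\int T^a\Psi\,dx = \bigl(\sum_\ell (|c_\ell|/\sqrt{\ell})(\ell+1)\bigr)\|\psi\|_{L^1(G)}$, which forces $\|T^a\Psi\|_{L^2(\mathbf{X})}$ (via Jensen against the probability measure on $\mathbf{X}$) to dominate the stray $\|\psi\|_{L^1}$-factor. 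Organizing the various Cauchy--Schwarz steps so that this absorption is clean — rather than merely approximate — is the technical heart of the argument, and it is precisely this bookkeeping that the convolution formulation (in contrast to the covering formulation in earlier works such as \cite{SV-AQUE}) is designed to streamline.
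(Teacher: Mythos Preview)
Your approach diverges from the paper's and, as you suspected in your final paragraph, the bookkeeping does not close.

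The first issue is the stray factor $\|\psi\|_{L^1(G)}$ produced by applying Cauchy--Schwarz inside the convolution. Your proposed absorption via Jensen only yields $\|T^a\Psi\|_{L^2(\mathbf{X})} \geq \|T^a\Psi\|_{L^1(\mathbf{X})} = C_{(c_\ell)}\,\|\psi\|_{L^1(G)}$ with $C_{(c_\ell)} = \sum_\ell |c_\ell|\deg(T_\ell)/\sqrt{\ell}$; this lets you replace $\|\psi\|_{L^1}$ by $C_{(c_\ell)}^{-1}\|T^a\Psi\|_{L^2}$, but the lemma is stated for arbitrary finitely-supported $(c_\ell)$ with no constraint forcing $C_{(c_\ell)} \geq 1$, so you cannot obtain the clean inequality as stated. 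The second, more serious, issue is that your chain in the later steps does not close: after writing $\int_{\mathbf{X}} (T^{\mathrm{abs}}|\varphi_y|)\cdot T^a\bigl((T^{\mathrm{abs}}|\varphi_y|)\Psi\bigr)$ and applying Cauchy--Schwarz twice, you are left controlling $\int T^a\bigl((T^{\mathrm{abs}}|\varphi_y|)^2\bigr)\cdot T^a(\Psi^2)$, which is at least as complicated as what you started with and does not reduce to $\|T^a\Psi\|_{L^2}^2$.

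The paper avoids both problems by never squaring early. It writes $T\varphi(x) = \sum_{s \in M/\Gamma} h_s\,\varphi(s^{-1}x)$ and applies the \emph{triangle inequality for the $L^2(G)$-norm} (Minkowski) first on the $s$-sum, then---after the substitution $x \mapsto s x$ and the rewriting $\psi\ast\nu(sx) = \int_{y \in G} \psi(sy)\,\nu_y^*(x)$ with $\nu_y^*(x) := \nu(x^{-1}y)$---again on the $y$-integral. This produces
\[
I \leq \int_{y \in G} \sum_{s \in M/\Gamma} |h_s|\,|\psi(sy)|\,\|\varphi\,\nu_y^*\|_{L^2(G)},
\]
in which $|\psi|$ appears \emph{linearly} rather than through $\|\psi\|_{L^1}$. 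Unfolding $\int_{y\in G}\sum_{s\in M/\Gamma} = \int_{y\in\mathbf{X}}\sum_{s\in\Gamma\backslash M}\sum_{\gamma\in\Gamma}$ converts the inner sum directly into $T^a\Psi(y)$, and a single Cauchy--Schwarz over $y \in \mathbf{X}$ together with the identity $\int_{y\in\mathbf{X}} \|\varphi\,\nu_y^*\|_{L^2(G)}^2 = \|\varphi\|_{L^2(\mathbf{X})}^2\|\nu\|_{L^2(G)}^2$ finishes the proof with no leftover factors.
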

\begin{proof}[Proof of Theorem \ref{thm:pos-ent}]
  We have $T \varphi = \lambda \varphi$ with $\lambda := \sum
  c_\ell \lambda_\pi(\ell)$ where $T_\ell \varphi = \sqrt{\ell} \lambda_\pi(\ell) \varphi$.
  Abbreviate
  $J := B(C,\eps)$; it is a group.
  Let $x \in \Omega$.
  Take $\psi := 1_{x B(C,\eps)} \geq 0$
  and $\nu := e_J := \vol(J)^{-1} 1_J$.
  Then $1_{x B(C,\eps)} = |\psi \ast \nu|^2$.
  By (\ref{eq:small-enough-wrt-units}),
  we have
  $\mu_{T \varphi}(|\psi \ast \nu|^2) = \|T \varphi (\psi \ast \nu) \|_{L^2(G)}^2$,
  and so by Lemma \ref{lem:geom-ampl},
  $\mu_{\varphi}(x B(C,\eps))
  \leq
  |\lambda|^{-1} \|T^a \Psi \|_{L^2(\mathbf{X})} \|\nu \|_{L^2(G)}$.
  The
  square $\|T^a \Psi \|_{L^2(\mathbf{X})}^2$
  is a linear combination of terms
  $\langle T_\ell^* \Psi, T_{\ell'}^* \Psi  \rangle = \langle
  T_{\ell'} T_\ell^* \Psi,  \Psi  \rangle$
  to which we apply
  the Hecke multiplicativity \eqref{eq:hecke-multiplicativity}
  and the unfolding: for $m,n \in \mathbb{Z}_{\geq 1}$,
  \begin{align}\label{eq:entropy-unfolding-hecke}
    \langle z(m) T_n^* \Psi, \Psi  \rangle \|\nu \|^2_{L^2(G)}
    &= \int_{g \in G}
    \sum_{s \in M_n}
      \psi(z(m) s g) \psi(g)\vol(J)^{-1}
      \\ \nonumber
    &= \# M_n \cap z(m^{-1}) x J x^{-1}.
  \end{align}
  By Lemma \ref{lem:hecke-returns}, we thereby obtain
  \[
  \mu_\varphi(x B(C,\eps))^2
  \leq 2^{13}
  |\lambda|^{-2} 
  \sum_{\ell, \ell'}
  |c_{\ell} c_{\ell'}|
  \sum_{d \mid (\ell,\ell')}
  d / \sqrt{\ell \ell '}
  \]
  provided that $c_\ell$ is supported on integers $\ell \leq 2^{-1/4} \eps^{-1/2}$
  having at most $5$ prime factors counted with multiplicity.
  A standard choice of $c_\ell$ completes the proof.
  For completeness, we record a variant of the choice from \cite[\S4.1]{venkatesh-2005}:
    Set $L := (1/\eps)^{0.1}$.
    Denote by $\mathcal{L}$ the set consisting of all $\ell = q$
    or $\ell = q^2$ taken over primes $q \in [L, 2 L]$; each such $q$ splits $B$ provided $\eps$ is small enough.
    Set $c_\ell := 0$ unless $\ell \in
    \mathcal{L}$,
    in which case
    $c_\ell := L^{-1} \log(L) \sgn(\lambda_\pi(\ell))^{-1}$.
    We have $\sum_{\ell} |c_\ell| \asymp 1$ and
    $|c_\ell| \leq L^{-1} \log(L)$,
    while Iwaniec's trick
    $|\lambda_\pi(q)|^2 + |\lambda_\pi(q^2)| \geq 1$, a
    consequence of
    (\ref{eq:hecke-multiplicativity}),
    implies
    $\lambda \asymp 1$.  With
    trivial estimation we obtain
    $\mu_\varphi(x B(C,\eps)) \ll L^{-1/2} (\log L)^{O(1)} \ll \eps^{0.01}$, as required.
\end{proof}
\begin{proof}[Proof of Lemma \ref{lem:hecke-returns}]
  Observe first, thanks to \eqref{eq:small-enough-in-K} and $n\mathbb{Z}[1/p]^\times \cap (\mathbb{Q}_+ \times \mathbb{Z}_p) = \{n\}$ and $z(m) \in K$,
  that $S \subseteq M_n \cap K = R(n) := \{\alpha \in R : \nr(\alpha) = n\}$.
  Given $s, t \in S$,
  their commutator $u := s t s^{-1} t^{-1}$
  thus satisfies
  $\nr(u) = 1$
  and
  $n^2 u  = s t s^{\iota} t^{\iota} \in R$, hence $\tr(u) \in n^{-2} \mathbb{Z}$.
  Since $S$ is
  conjugate to a subset of the preimage in $M_2(\mathfrak{o})$
  of the upper-triangular Borel in
  $M_2(\mathfrak{o}/\mathfrak{q})$
  with $\mathfrak{q} := \{x \in \mathfrak{o} : |x| \leq
  \eps^2\}$,
  and the commutator of that preimage is contained in the preimage of the unipotent,
  one has $|\tr(u) - 2|_p \leq \eps^2$.
  Since $B$ is definite,
  $|\tr(u)|_\infty \leq 2 |\nr(u)|_\infty^{1/2} = 2$.
  The integer $a := n^2 \tr(u) - 2 n^2$ thus satisfies
  $|a|_\infty |a|_p \leq 2 n^2 \eps^2 < 1$ and so must be zero,
  i.e., $\tr(u) = 2$;
  since $B$ is non-split, $u=1$.
  In summary, any two elements of $S$ commute.
  Since $B$ is non-split and definite,
  $S$ is contained in the set
  $\mathcal{O}(n)$ of norm $n$ elements
  in some imaginary quadratic
  order $\mathcal{O} \subset R$.
  Thus $\#S \leq \# \mathcal{O}(n) \leq \# \mathcal{O}^\times \cdot \#
  \{I \subseteq \mathcal{O} : \nr(I) = n\}
  \leq 6 \prod_{p^k || n} (k + 1)$.
\end{proof}
\begin{proof}[Proof of Lemma \ref{lem:geom-ampl}]
  Write $M := R[1/p]$.  We may express the operator $T$ by the
  formula
  $T \varphi(x) = \sum_{s \in M/\Gamma} h_s \varphi(s^{-1} x)$ for
  some finitely supported coefficients $h_s$; then
  $T^a \Psi(x) = \sum_{s \in M/\Gamma} |h_s| \Psi(s x)$.
  Abbreviate
  $I := \|T \varphi (\psi \ast \nu)\|_{L^2(G)}$.
  By the triangle inequality and a change of variables
  $x \mapsto s x$,
  we have
  \[I
  \leq
  \sum_{s \in M / \Gamma}
  |h_s|
  (
  \int_{x \in G}
  |\varphi|^2(x) |\psi \ast \nu(s x)|^2
  )^{1/2}.
  \]
  By a change of variables,
  $\psi \ast \nu(s x)
  = \int_{y \in G}
  \psi(s y)
  \nu_y^*(x)$
  with
  $\nu_y^*(x) := \nu(x^{-1} y)$.
  By the triangle inequality,
  $I
  \leq
  \int_{y \in G}
  \sum_{s \in M / \Gamma}
  |h_s|
  |\psi(s y)|
  \|\varphi \nu_y^*\|_{L^2(G)}$.
  We unfold
  $\int_{y \in G} \sum_{s \in M / \Gamma}
  = \int_{y \in \mathbf{X}} \sum_{s \in \Gamma \backslash M} \sum_{\gamma \in \Gamma}$,
  giving
  $I
  \leq
  \int_{y \in \mathbf{X}}
  T^a \Psi(y)
  \|\varphi \nu_y^*\|_{L^2(G)}$.
  We conclude via Cauchy--Schwartz
  and the identity
  $\int_{y \in \mathbf{X}}
  \|\varphi \nu_y^*\|_{L^2(G)}^2
  = \|\nu \|_{L^2(G)}^2
  \|\varphi\|_{L^2(\mathbf{X})}^2$.
\end{proof}

\section{Representation-theoretic preliminaries}
\label{sec-33}
\subsection{Generalities}
\label{sec-33-1}
Let $k$ be a non-archimedean local field
with maximal order $\mathfrak{o}$,
maximal ideal $\mathfrak{p}$,
normalized valuation $\nu : k \rightarrow \mathbb{Z} \cup
\{+\infty \}$,
and $q := \# \mathfrak{o} / \mathfrak{p}$.
Fix Haar measures $d x, d^\times y$ on $k, k^\times$ assigning
volume one to maximal compact subgroups.
Fix a nontrivial unramified additive
character $\psi : k \rightarrow \mathbb{C}^{(1)}$.
Set $G := \GL_2(k)$.

\subsection{Some notation and terminology}
\label{sec:rep-theory-prelims-some-notation}
For $x \in k$ and $y_1,y_2 \in  k^\times$, set
\[
n(x) := \begin{pmatrix}
  1 & x \\
  0 & 1
\end{pmatrix},
\quad
n'(x) :=
\begin{pmatrix}
  1 & 0 \\
  x & 1
\end{pmatrix},
\]
\[
\diag(y_1,y_2) := \begin{pmatrix}
  y_1 & 0 \\
  0 & y_2
\end{pmatrix},
\quad
w  := \begin{pmatrix}
  & -1 \\
  1 & 
\end{pmatrix}
\]
and
$a(y) := \diag(y,1)$,
$z(y) := \diag(y,y)$.
Say that a vector $v$  in some $\GL_2(k)$-module $\pi$
is \emph{supported on $m..m'$},
for integers $m, m'$ with $m \leq m'$,
if $v$ is invariant by $n(\mathfrak{p}^{-m})$ and
$n'(\mathfrak{p}^{m'})$,
and that $v$ has \emph{orientation $(\omega_1,\omega_2)$},
for characters $\omega_1, \omega_2$ of $\mathfrak{o}^\times$,
if $\pi(\diag(y_1,y_2)) v = \omega_1(y_1) \omega_2(y_2) v$ for
all $y_1,y_2 \in \mathfrak{o}^\times$.

\subsection{Principal series representations}\label{sec:principal-series-reps}
\label{sec-33-2}
For characters $\chi_1, \chi_2 : k^\times \rightarrow
\mathbb{C}^\times$,
denote by $\pi = \chi_1 \boxplus \chi_2$ the \emph{principal
  series representation}
of $G$ realized in its \emph{induced model}
as a space of smooth functions
$v : G \rightarrow \mathbb{C}$
satisfying $v(n(x) \diag(y_1,y_2) g) =
|y_1/y_2|^{1/2} \chi_1(y_1) \chi_2(y_2) v(g)$
for all $x \in k$ and $y_1,y_2 \in k^\times$ and $g \in G$.
A sufficient condition for $\pi$ to be irreducible
is that $c(\chi_1/\chi_2) \neq 0$.
If $\chi_1, \chi_2$ are unitary, then $\pi$ is unitary;
an invariant norm is given by
$\|v\|^2 := \int_{x \in k} |v(n'(x))|^2 \, d x$.
The log-conductor is $c(\pi) = c(\chi_1) + c(\chi_2)$
and the central character is $\chi_\pi = \chi_1 \chi_2$.

The following ``line model'' parametrization of $\pi$ shall be convenient:
for suitable $f \in C^\infty(k)$,
define $v_f \in \pi$
by
\begin{equation}\label{eqn:v-from-f-induced-model}
  v_f(g) :=
  f (c/d)
  \left\lvert \det (g)/d^2 \right\rvert^{1/2}
  \chi_1(\det (g)/d) \chi_2(d),
  \quad
  g = \begin{pmatrix}
    a & b \\
    c & d
  \end{pmatrix}. 
\end{equation}
If $\chi_1, \chi_2$ are unitary,
then $\|v_f\|^2 = \int_{k} |f|^2$.

\subsection{Generic representations}
\label{sec-33-3}
Recall that an irreducible representation
$\sigma$ of $G$ is \emph{generic}
if is isomorphic
to an irreducible subspace
$\mathcal{W}(\sigma,\psi)$
of the space of smooth functions $W : G \rightarrow \mathbb{C}$
satisfying $W(n(x) g) = \psi(x) W(g)$ for all $x,g \in k,G$;
in that case, $\mathcal{W}(\sigma,\psi)$ is called the
\emph{Whittaker model}
of $\sigma$.
It is known that every non-generic irreducible representation
of $G$ is one-dimensional.

For each $W \in \mathcal{W}(\sigma,\psi)$,
denote also by $W$ the function
$W : k^\times \rightarrow \mathbb{C}$
defined by $W(y) := W(a(y))$.
The space $\mathcal{K}(\sigma,\psi)$ of
functions $W : k^\times \rightarrow \mathbb{C}$
arising in this way from some $W \in \mathcal{W}(\sigma,\psi)$
is called the \emph{Kirillov model} of $\sigma$.
It is known that $\mathcal{K}(\sigma,\psi) \supseteq
C_c^\infty(k^\times)$.

An irreducible principal series representations $\pi = \chi_1
\boxplus \chi_2$
is generic;
the standard intertwining map from $\pi$ to its $\psi$-Whittaker
model $\mathcal{W}(\pi,\psi)$,
denoted $\pi \ni v \mapsto W_v : \GL_2(k) \rightarrow \mathbb{C}$,
is given by
$W_v(g) := \int_{x \in k}
v(w n(x) g) \psi(- x) \, d x$.
In general, this integral fails to converge absolutely and must
instead be interpreted via analytic continuation,
regularization, or as a limit of integrals taken over the compact
subgroups $\mathfrak{p}^{-n}$ of $k$ as $n \rightarrow \infty$
(see e.g. \cite[p485]{MR1431508}); we ignore such
standard technicalities here.

\subsection{Newvector theory}
\label{sec-33-4}
Recall Definition \ref{defn:newvectors-general}.
\begin{theorem}[Basic newvector theory]\label{thm:basic-local-newvectors}
  Let $\pi$ be a generic irreducible representation
  of $\GL_2(k)$
  and let $m \leq m'$ be integers.
  Then the space of vectors in $\pi$ supported on
  $m..m'$ and with orientation
  $(1,\chi_\pi|_{\mathfrak{o}^\times})$
  has dimension $\max(0,1 + |m-m'| - c(\pi))$.

  In particular,
  let $\pi$ be any irreducible representation of
  $\GL_2(k)$
  with ramified central character $\chi_\pi$.
  Denote by $V$ the space of vectors in $\pi$ supported on $-N_1..N_2$
  with orientation
  $(1,\chi_\pi|_{\mathfrak{o}^\times})$.
  Then $V = 0$ unless $\pi$ is generic,
  in which case $\dim V = \max(0,1 + c(\pi) - N)$.
\end{theorem}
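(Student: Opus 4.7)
The plan is to reduce the assertion to the classical local newvector theorem of Casselman, cited in the paper as \cite{MR0337789, Sch02}. The first step is to eliminate the parameter $m$ via a twist by the diagonal element $a(\varpi^{-m}) = \diag(\varpi^{-m},1)$, where $\varpi$ denotes a uniformizer of $\mathfrak{o}$. From the conjugation identities
\[
a(\varpi^m)\, n(x)\, a(\varpi^{-m}) = n(\varpi^m x), \qquad a(\varpi^m)\, n'(x)\, a(\varpi^{-m}) = n'(\varpi^{-m} x),
\]
one checks that $v \mapsto \pi(a(\varpi^{-m})) v$ sends vectors supported on $0..(m'-m)$ bijectively onto vectors supported on $m..m'$. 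The map preserves orientation because $a(\varpi^{-m})$ commutes with the diagonal action of $\mathfrak{o}^\times \times \mathfrak{o}^\times$. This reduces the first assertion to the case $m=0$, $m' = n$, where $n := |m-m'|$.

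The second step is to identify the remaining condition (``supported on $0..n$ with orientation $(1, \chi_\pi|_{\mathfrak{o}^\times})$'') with the classical $K_{0..n}$-invariance condition $\pi(g) v = \chi_\pi(d) v$ for all $g = \left(\begin{smallmatrix} a & b \\ c & d \end{smallmatrix}\right) \in K_{0..n}$. For $n \geq 1$, every such $g$ has $d \in \mathfrak{o}^\times$, and the factorization
\[
g = n(b/d)\,\diag(\det(g)/d,\, d)\, n'(c/d),
\]
together with the elementary observations $b/d \in \mathfrak{o}$, $\det(g)/d \in \mathfrak{o}^\times$, and $c/d \in \mathfrak{p}^n$, engages exactly the three assumed invariances and yields the nontrivial direction of the equivalence; the case $n=0$ is the standard spherical-vector situation. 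Casselman's theorem then supplies the dimension $\max(0, 1 + n - c(\pi))$.

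For the ``in particular'' clause, I would note that every irreducible admissible representation of $\GL_2(k)$ is either generic or one-dimensional. In the one-dimensional case $\pi = \chi \circ \det$, every vector carries orientation $(\chi|_{\mathfrak{o}^\times}, \chi|_{\mathfrak{o}^\times})$, which can equal $(1, \chi_\pi|_{\mathfrak{o}^\times})$ only when $\chi|_{\mathfrak{o}^\times}$ is trivial; this would force $\chi_\pi = \chi^2$ to be unramified, contrary to the standing hypothesis. Hence $V = 0$ for non-generic $\pi$, and the generic case is inherited from the first part. No step here presents a genuine obstacle: the substantive content is precisely Casselman's classical theorem, and the rest is a routine translation between the support/orientation conventions of this paper and the standard $K_{0..n}$-invariance formulation.
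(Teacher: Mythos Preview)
Your proof is correct and follows essentially the same approach as the paper's own proof: reduce the first assertion to Casselman's classical newvector theorem, then dispose of the one-dimensional case by comparing the orientation $(\chi|_{\mathfrak{o}^\times},\chi|_{\mathfrak{o}^\times})$ carried by $\chi\circ\det$ with the required $(1,\chi_\pi|_{\mathfrak{o}^\times})$. The paper's proof is terser (a bare citation to \cite{MR0337789} for the first part), whereas you have spelled out the conjugation by $a(\varpi^{-m})$ and the Iwahori-type factorization that make the translation to Casselman explicit; this is a welcome elaboration rather than a different route.

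Two small remarks. First, your treatment of the edge case $n=0$ as ``the standard spherical-vector situation'' is fine when $\chi_\pi$ is unramified, but when $\chi_\pi$ is ramified one should note that the identity $n(1)n'(-1)n(1)=\left(\begin{smallmatrix}0&1\\-1&0\end{smallmatrix}\right)$ forces any vector with the stated invariances to be fixed by the Weyl element, which then swaps the two diagonal characters and forces $\chi_\pi|_{\mathfrak{o}^\times}$ to be trivial; this pins down $V=0$ in that subcase. Second, note that the displayed formula $\dim V = \max(0,1+c(\pi)-N)$ in the ``in particular'' clause has $c(\pi)$ and $N$ transposed relative to what the first assertion gives (namely $\max(0,1+N-c(\pi))$); this is a typo in the statement, and your ``inherited from the first part'' is the right thing to say once it is corrected.
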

\begin{proof}
  For the first assertion, see \cite{MR0337789}.
  The generic case of the second assertion
  follows from the first assertion,
  so suppose $\pi$ is one-dimensional.
  Write $\pi = \chi \circ \det$ for some $\chi : k^\times
  \rightarrow \mathbb{C}^\times$.
  Since $\chi_\pi$ is ramified, the characters $(1, \chi_\pi|_{\mathfrak{o}^\times})$
  and $(\chi|_{\mathfrak{o}^\times},
  \chi|_{\mathfrak{o}^\times})$
  of $\mathfrak{o}^\times \times
  \mathfrak{o}^\times$ are distinct, and 
  so $V = 0$.
\end{proof}

\begin{lemma}\label{lem:extremal-central-chars}
  Let $\pi$ be an irreducible generic representation of
  $\GL_2(k)$ with ramified central character
  $\chi_\pi$.
  Then $c(\chi_\pi) \leq c(\pi)$ with equality
  precisely when
  $\pi$ is isomorphic to an irreducible principal
  series representation $\chi_1 \boxplus \chi_2$
  for which at least one of the inducing characters $\chi_1,\chi_2$ is unramified.
\end{lemma}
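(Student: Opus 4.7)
The plan is to treat the inequality $c(\chi_\pi) \leq c(\pi)$ and the characterization of equality separately, the former via newvector theory and the latter via case analysis along the Langlands classification of generic irreducible representations of $\GL_2(k)$.

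For the inequality, write $n := c(\pi)$ and invoke Theorem \ref{thm:basic-local-newvectors} to produce a nonzero $v_0 \in \pi$ of support $0..n$ and orientation $(1, \chi_\pi|_{\mathfrak{o}^\times})$. Existence of such a vector forces the prescribed transformation law $\pi(g) v_0 = \chi_\pi(d) v_0$ to define a genuine character of the compact group $K_{0..n}$. Computing the $(2,2)$-entry of a product $g_1 g_2$ of elements of $K_{0..n}$ gives $d_1 d_2 + c_1 b_2$ with $c_1 \in \mathfrak{p}^n$ and $b_2 \in \mathfrak{o}$, so that $c_1 b_2 \in \mathfrak{p}^n$; multiplicativity of $g \mapsto \chi_\pi(d)$ then demands that $\chi_\pi$ be trivial on $1 + \mathfrak{p}^n$, which is exactly $c(\chi_\pi) \leq n$.

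For the equality characterization, sufficiency is immediate: if $\pi \cong \chi_1 \boxplus \chi_2$ with $c(\chi_1) = 0$, then $c(\pi) = c(\chi_2)$ and $c(\chi_\pi) = c(\chi_1 \chi_2) = c(\chi_2)$, since twisting by an unramified character preserves the conductor. For the converse, I will rule out in turn each alternative to the asserted form of $\pi$. In the generic principal series case $\pi \cong \chi_1 \boxplus \chi_2$ one has $c(\chi_\pi) = c(\chi_1 \chi_2) \leq \max(c(\chi_1), c(\chi_2))$ while $c(\pi) = c(\chi_1) + c(\chi_2)$, so $c(\chi_\pi) = c(\pi)$ forces $\min(c(\chi_1), c(\chi_2)) = 0$. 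In the Steinberg twist case $\pi \cong \mathrm{St} \otimes \chi$, the ramification hypothesis on $\chi_\pi$ forces $c(\chi) \geq 1$ (since $\chi_{\mathrm{St}}$ is unramified), and then $c(\pi) = 2 c(\chi)$ while $c(\chi_\pi) = c(\chi^2) \leq c(\chi) < c(\pi)$, giving a contradiction.

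I expect the main obstacle to be the supercuspidal case, where one must also rule out $c(\chi_\pi) = c(\pi)$. The cleanest route I envisage passes through the Langlands parameter: if $\pi$ is supercuspidal then its parameter $\rho_\pi$ is an irreducible two-dimensional representation of $W_k$, with $\chi_\pi = \det \rho_\pi$ and $c(\pi) = c(\rho_\pi)$ equal to the Artin conductor. In the dihedral case $\rho_\pi \cong \Ind_{W_E}^{W_k} \theta$ for some quadratic $E/k$ and regular character $\theta$ of $E^\times$, the conductor--discriminant formula gives $c(\pi) = v_k(\mathfrak{d}_{E/k}) + v_k(N_{E/k} \mathfrak{f}_E(\theta))$, while $\chi_\pi = \theta|_{k^\times} \cdot \omega_{E/k}$ together with the containment $1 + \mathfrak{p}_k^m \subseteq 1 + \mathfrak{p}_E^{e(E/k) m}$ forces $c(\chi_\pi)$ to be strictly smaller than $c(\pi)$ in both the unramified and ramified subcases (for instance $c(\pi) = 2 c_E(\theta)$ while $c(\chi_\pi) \leq c_E(\theta)$ when $E/k$ is unramified). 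The exceptional non-dihedral (residue characteristic two) supercuspidals may be handled either by the same philosophy applied to their parameters or by citing standard references on the local representation theory of $\GL_2$.
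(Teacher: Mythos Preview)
The paper does not prove this lemma; it simply cites \cite[Lemma 3.1]{MR3272013} and \cite[Proof of Prop 2]{MR0338274}. Your write-up therefore supplies strictly more than the paper does, and your overall strategy---deduce the inequality from newvector theory, then characterize equality by running through the classification---is sound and matches what those references do.

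Two comments. First, your argument for $c(\chi_\pi)\le c(\pi)$ has a small wrinkle: the conditions ``support $0..n$ and orientation $(1,\chi_\pi|_{\mathfrak{o}^\times})$'' furnished by Theorem~\ref{thm:basic-local-newvectors} do not \emph{a priori} yield the formula $\pi(g)v_0=\chi_\pi(g_{22})v_0$ you then analyze; they give $\pi(g)v_0=\chi_\pi(d')v_0$ with $d'$ the diagonal part of the Iwahori factorization, and $d'\equiv g_{22}\pmod{\mathfrak{p}^n}$. The fix is easy: the support and orientation conditions still force $K_{0..n}$ to act on $v_0$ by a character, and evaluating that character on $n(b)n'(c)$ (whose Iwahori diagonal part is $(1+bc)^{-1}$) gives $\chi_\pi(1+bc)=1$ for $b\in\mathfrak{o}$, $c\in\mathfrak{p}^n$. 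Alternatively and more directly, take the Casselman $K_1(\mathfrak{p}^n)$-fixed vector and observe $z(1+\mathfrak{p}^n)\subset K_1(\mathfrak{p}^n)$.

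Second, your treatment of supercuspidals via the Langlands parameter and the conductor--discriminant formula is correct for dihedral $\pi$ (your bounds $c(\chi_\pi)\le c_E(\theta)$ in the unramified case and $c(\chi_\pi)\le\max(\lceil c_E(\theta)/2\rceil, v_k(\mathfrak{d}_{E/k}))$ in the ramified case are right, and regularity forces $c_E(\theta)\ge 1$). Leaving the residue-characteristic-two non-dihedral case to references is no worse than what the paper itself does; if you want a self-contained argument there, the cited references proceed via the explicit Bushnell--Kutzko/Kutzko construction of supercuspidals rather than through Langlands parameters, which is lighter machinery.
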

\begin{proof}
  This is well-known, see \cite[Lemma 3.1]{MR3272013} or
  \cite[Proof of Prop 2]{MR0338274}.
\end{proof}

\begin{lemma}\label{lem:balanced-newvectors-in-induced-model}
  Let $\pi = \chi_1 \boxplus \chi_2$ be an irreducible principal
  series representation of $G$.  Let $v \in \pi$ be a newvector
  of some support $m..m'$.
  \begin{enumerate}
  \item If $\chi_1$ is ramified and
    $\chi_2$ is ramified, then $v = v_f$ as in
    \eqref{eqn:v-from-f-induced-model} for $f$ a
    character multiple of the characteristic function
    of an $\mathfrak{o}^\times$-coset,
    thus
    $f = c \chi 1_{\varpi^n \mathfrak{o}^\times}$
    for some $c \in \mathbb{C}$, $\chi : k^\times \rightarrow \mathbb{C}^\times$
    and $n \in \mathbb{Z}$.
  \item If $\chi_1$ is unramified and
    $\chi_2$ is ramified,
    then $v = v_f$
    for $f = c 1_\mathfrak{a}$
    for some scalar $c$ and fractional $\mathfrak{o}$-ideal $\mathfrak{a} \subset k$.
  \end{enumerate}
\end{lemma}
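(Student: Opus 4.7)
The plan is to combine uniqueness of newvectors with an explicit exhibition of a candidate in the line model. First I would reduce to the normalized case $m = 0$, $m' = c(\pi)$: conjugation by $\eta := \diag(\varpi^m, 1)$ carries $K_{0..c(\pi)}$ onto $K_{m..m'}$ and preserves lower-right entries, so $\pi(\eta^{-1})$ is a bijection between newvectors of support $0..c(\pi)$ and those of support $m..m'$. In the line model \eqref{eqn:v-from-f-induced-model}, this bijection corresponds to the dilation $f(t) \mapsto C \cdot f(\varpi^m t)$ for an explicit constant $C$, under which both claimed forms --- character times indicator of an $\mathfrak{o}^\times$-coset in (1), and constant times indicator of a fractional ideal in (2) --- are preserved. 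By Theorem \ref{thm:basic-local-newvectors}, the space of newvectors of support $0..c(\pi)$ is one-dimensional, so it suffices to exhibit one nonzero candidate of the asserted shape in each case.

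For case (2), with $\chi_1$ unramified, the candidate is $f := 1_{\mathfrak{p}^{c(\chi_2)}}$. Writing $t := c_g/d_g$ and $k = \begin{pmatrix} a & b \\ c_k & d_k \end{pmatrix} \in K_{0..c(\chi_2)}$, a direct computation with \eqref{eqn:v-from-f-induced-model} (using that $\chi_1$ is trivial on $\mathfrak{o}^\times$ and $\det k \in \mathfrak{o}^\times$) yields
\[
\frac{v_f(g k)}{v_f(g)} = \frac{f((ta + c_k)/(tb + d_k))}{f(t)} \cdot |tb+d_k|^{-1} (\chi_2/\chi_1)(tb+d_k).
\]
A case analysis on whether $t \in \mathfrak{p}^{c(\chi_2)}$ shows this ratio equals $\chi_\pi(d_k)$ in the non-vanishing case, and that both sides vanish otherwise; the key identity $\chi_2(tb+d_k) = \chi_2(d_k)$ relies on $tb/d_k \in \mathfrak{p}^{c(\chi_2)}$ lying in the kernel of $\chi_2$.

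For case (1), with both $\chi_i$ ramified, the candidate is $f := \chi_1^{-1} \cdot 1_{\varpi^{c(\chi_2)} \mathfrak{o}^\times}$. I would check the three standard newvector invariances: (i) diagonal invariance reduces to $f(tu) = \chi_1^{-1}(u) f(t)$ for $u \in \mathfrak{o}^\times$, which is built into $f$; (ii) lower-unipotent invariance under $s \in \mathfrak{p}^{c(\pi)}$ reduces to $\chi_1(1+s/t) = 1$ for $t \in \varpi^{c(\chi_2)} \mathfrak{o}^\times$, which holds because $s/t \in \mathfrak{p}^{c(\chi_1)}$; (iii) upper-unipotent invariance under $s \in \mathfrak{o}$ reduces on the open cell to $\chi_2(1+ts) = 1$, which holds because $ts \in \mathfrak{p}^{c(\chi_2)}$. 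For $t$ outside the support of $f$, a short valuation argument confirms that the transformed quantities are also outside the support, giving $0 = 0$ on both sides.

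The main subtlety is twofold. First, correctly pinning down the support $\varpi^{c(\chi_2)} \mathfrak{o}^\times$ in case (1) is what activates the conductor of $\chi_2$ in the upper-unipotent check; a naive candidate supported on $\mathfrak{o}^\times$ fails for exactly this reason. Second, the formula \eqref{eqn:v-from-f-induced-model} breaks down on the small Bruhat cell $\{d = 0\}$, which is visited for instance when $g n(s)$ is of the form $n'(t) n(-1/t)$ with $|t| \geq 1$; this is handled by continuity, since the compact support of $f$ forces $f(1/s) = 0$ for small $s$, so that $v_f(w n(s)) \to 0$, and $v_f$ extends by zero across the small cell, matching its zero value on the nearby open cell.
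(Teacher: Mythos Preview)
Your proposal is correct and follows the same reduction as the paper: both arguments use that $a(\varpi)$ (equivalently, your $\eta$) shifts newvectors of support $m..m'$ to those of support $m-1..m'-1$, reducing to the normalized case $m=0$. Where the paper then simply cites \cite{Sch02} for the explicit form of the $K_{0..c(\pi)}$-newvector in the induced model, you instead supply a direct verification by exhibiting the candidate $f$ and checking the $K_{0..c(\pi)}$-equivariance on generators; this is more self-contained but not a different strategy.
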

\begin{proof}
  Both assertions are well-known in the special case
  $m = 0$ (see \cite{Sch02}) and follow inductively in general
  using that $a(\varpi)$
  bijectively maps newvectors of support $m..m'$
  to those of support $m-1..m'-1$.
\end{proof}

\subsection{Local Rankin--Selberg integrals}
\label{sec-33-5}
Let $\pi$ be an irreducible unitary principal series
representation of $G := \GL_2(k)$
and $\sigma$ an irreducible generic unitary representation of
$\PGL_2(k)$.
We have the following special case of a theorem of D. Prasad:

\begin{theorem}\cite{MR1059954}\label{thm:uniqueness-trilinear-functionals}
  The space $\Hom_G(\sigma \otimes \overline{\pi} \otimes \pi, \mathbb{C})$,
  consisting of trilinear functionals $\ell : \sigma \otimes
  \overline{\pi } \otimes \pi \rightarrow \mathbb{C}$
  satisfying the diagonal invariance
  $\ell(\sigma(g) v_1, \overline{\pi}(g) \overline{v_2}, \pi(g) v_3)
  =
  \ell(v_1,v_2,v_3)$ for all $g \in G$ and all vectors,
  is one-dimensional.
\end{theorem}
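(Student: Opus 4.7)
The plan is to reduce the statement to a $B$-equivariant $\Hom$ computation via Frobenius reciprocity, and then to exploit the standard Kirillov/Jacquet filtration of admissible representations restricted to the upper-triangular Borel $B \subset G$.

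Realize $\pi$ in its induced model $\pi = \Ind_B^G \mu$, where $\mu$ is a quasi-character of $B$ corresponding to the inducing data of $\pi$. By the projection formula,
\[
\sigma \otimes \overline{\pi} \otimes \Ind_B^G \mu \;\cong\; \Ind_B^G\bigl((\sigma|_B) \otimes (\overline{\pi}|_B) \otimes \mu\bigr),
\]
and Frobenius reciprocity therefore yields
\[
\Hom_G(\sigma \otimes \overline{\pi} \otimes \pi,\mathbb{C}) \;\cong\; \Hom_B\bigl(\sigma|_B \otimes \overline{\pi}|_B,\; \mu^{-1} \delta_B^{-1/2}\bigr),
\]
up to the usual conventions on the twist by the modular character $\delta_B$. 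For any generic irreducible admissible representation $\tau$ of $\GL_2(k)$, Bernstein--Zelevinsky supply a short exact sequence of $B$-modules
\[
0 \to C_c^\infty(k^\times) \to \tau|_B \to \tau_N \otimes \delta_B^{1/2} \to 0,
\]
whose submodule is the Kirillov realization of compactly-supported Whittaker functions (on which $B$ acts through its mirabolic action on $k^\times$) and whose quotient is the (at most two-dimensional) normalized Jacquet module. Applying this to both $\sigma$ and $\overline{\pi}$ endows $\sigma|_B \otimes \overline{\pi}|_B$ with a four-step filtration.

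The ``open'' graded piece $C_c^\infty(k^\times) \otimes C_c^\infty(k^\times) \cong C_c^\infty(k^\times \times k^\times)$ carries a $B$-action that is transitive on $k^\times \times k^\times$ modulo unipotent translations, and a direct calculation identifies the space of $B$-equivariant functionals into the target character $\mu^{-1}\delta_B^{-1/2}$ as exactly one-dimensional, spanned by integration against the matching character of $k^\times \times k^\times$. Each of the remaining three graded pieces involves a Jacquet module factor on at least one side. Under our hypotheses---irreducibility and unitarity of $\pi$ and genericity of $\sigma$---the torus characters appearing in these Jacquet module factors cannot match those required by $\mu^{-1}\delta_B^{-1/2}$, and so the corresponding $\Hom_B$ contributions vanish.

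Existence of a nonzero element of $\Hom_G(\sigma \otimes \overline{\pi} \otimes \pi,\mathbb{C})$ is classical: the local Rankin--Selberg integral of Whittaker functions, suitably regularized, provides an explicit $G$-invariant trilinear form, and this paper in fact makes essential use of such integrals in \S\ref{sec-stationary-phase-local-rs}. Combined with the upper bound above, this gives dimension exactly one. The main obstacle is the vanishing of the Jacquet-module contributions: a coincidence of torus characters could in principle produce a second invariant trilinear form, and ruling out all such coincidences---which is possible precisely because of the unitarity and irreducibility assumptions---is the technical heart of Prasad's argument.
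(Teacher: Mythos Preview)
The paper does not supply its own proof of this statement; it is quoted as a special case of Prasad's theorem \cite{MR1059954} and used as a black box. Your sketch is essentially the standard argument (and close to Prasad's): realize one factor as a principal series, apply Frobenius reciprocity to reduce to a $\Hom_B$, and filter the remaining tensor product via the Kirillov submodule and Jacquet quotient.

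A few points worth sharpening. The mixed graded pieces $C_c^\infty(k^\times) \otimes J$ vanish for a cleaner reason than you indicate: the $N$-coinvariants of $C_c^\infty(k^\times)$ under the Kirillov action are already zero, so no character matching is needed there. For the open piece, your description ``transitive on $k^\times \times k^\times$ modulo unipotent translations'' is imprecise; what actually happens is that $n(x)$ acts by $\psi(x(y_1 - y_2))$ (one factor carries $\psi$, the other $\psi^{-1}$), so the $N$-coinvariants of $C_c^\infty(k^\times \times k^\times)$ identify with $C_c^\infty$ of the diagonal $y_1 = y_2$, on which the torus acts by scaling and a character, giving a one-dimensional Mellin-type functional. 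For the closed piece $J_\sigma \otimes J_{\overline{\pi}}$, your claim that unitarity and irreducibility exclude all character coincidences is correct but deserves a line of justification: tracking the $\delta_B^{1/2}$ shifts, any match forces either $\sigma$ to sit at a reducibility point $|\cdot|^{-1/2} \boxplus |\cdot|^{1/2}$ or $\chi_1/\chi_2$ to have non-unitary absolute value, both excluded by hypothesis.

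With those clarifications your outline is sound and coincides with the approach of the cited reference.
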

We may fix a nonzero element $\ell_{\RS} \in \Hom_G(\sigma
\otimes \overline{\pi} \otimes \pi,\mathbb{C})$
as follows:
Denote by $Z$ the center of $G$
and
$U := \{n(x) : x \in k\}$.
Equip the right $G$-space $Z U \backslash G$
with the Haar measure for which
\begin{equation}\label{eq:haar-rs-defn}
  \int_{g \in Z U \backslash G}
  \phi(g)
  =
  \int_{y \in k^\times}
  \int_{x \in k}
  \phi(a(y) n'(x))
  \, \frac{d^\times y}{|y|} \, d x
\end{equation}
for $\phi \in C_c( Z U \backslash G)$ (see
\cite[\S3.1.5]{michel-2009}).
Realize $\pi$ in its induced model.
For $W_1 \in \mathcal{W}(\sigma,\psi),
W_2 \in \mathcal{W}(\pi,\psi)$
and $v_3 \in \pi$,
set 
$\ell_{\RS}(W_1, \overline{W_2}, v_3)
:=
\int_{Z U \backslash G }
W_1 \overline{W_2} v_3$
(see \cite[\S3.4.1]{michel-2009}).  The definition applies in particular when $W_2$ is the image $W_v$ of  some $v \in \pi$ under the intertwiner from \S\ref{sec-33-3}.

The trick encapsulated by the following lemma
(a careful application of ``non-archimedean integration by parts'')
shall be exploited repeatedly in \S\ref{sec-stationary-phase-local-rs}:
\begin{lemma}[Application of diagonal invariance]\label{lem:appl-diag-invar}
  Let $f \in C_c^\infty(k)$.  Let $U_1$ be an open subgroup of $\mathfrak{o}^\times$
  for which $\overline{f} \otimes f$ is \emph{$U_1$-invariant}
  in the sense that $\overline{f}(u x) f(u y) = \overline{f}(x)
  f(y)$
  for all $u,x,y \in U_1,k,k$.
  Let $W_1 \in \mathcal{W}(\sigma,\psi)$.
  Then
  \[
  \ell_{\RS}(W_1,\overline{W_{v_f}}, v_f) =
  \int_{\substack{x \in k  \\ y \in k^\times \\ t \in k} }
  f(x)
  \overline{f} \left( x + \frac{y}{t} \right)
  F(x,y,t;W_1,U_1)
  \, \frac{d t}{|t|}
  \, d x \, d^\times y,
  \]
  where
  $F(x,y,t;W_1,U_1) :=
  \mathbb{E}_{u \in U_1}
  W_1(a(y) n'(x/u))
  \chi_1 \chi_2^{-1}(u t)
  \psi(u t)$
  with $\mathbb{E}_{u \in U_1}$ denoting an integral with respect to the probability Haar.
\end{lemma}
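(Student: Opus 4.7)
The plan is to unfold the Rankin--Selberg integral explicitly in coordinates, then apply a ``non-archimedean integration by parts'' via a simultaneous change of variables that exploits the $U_1$-invariance of $\overline{f}\otimes f$. Using \eqref{eq:haar-rs-defn} to unfold,
\[
\ell_{\RS}(W_1, \overline{W_{v_f}}, v_f)
= \int_{k^\times}\int_k W_1(a(y)n'(x))\, \overline{W_{v_f}(a(y)n'(x))}\, v_f(a(y)n'(x)) \, dx\, \frac{d^\times y}{|y|}.
\]
A direct matrix computation in the line model gives $v_f(a(y)n'(x)) = f(x)|y|^{1/2}\chi_1(y)$, and the Whittaker intertwiner $W_v(g) = \int v(wn(t)g)\psi(-t)\,dt$ applied to $v_f$ yields
\[
W_{v_f}(a(y)n'(x)) = \int_k f(x+y/t)\, |y/t^2|^{1/2} \chi_1(y/t)\chi_2(t)\psi(-t)\, dt.
\]
Substituting, conjugating (using unitarity of $\chi_1, \chi_2$), and simplifying via $|y/t^2|^{1/2}|y|^{1/2}|y|^{-1} = |t|^{-1}$ and $\chi_1^{-1}(y/t)\chi_1(y) = \chi_1(t)$ produces
\[
\ell_{\RS} = \int f(x)\,\overline{f}(x+y/t)\, W_1(a(y)n'(x))\, \chi_1\chi_2^{-1}(t)\psi(t)\, \frac{dt}{|t|}\, dx\, d^\times y.
\]

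The key step is now the following averaging trick. Fix $u \in U_1$ and perform the compound substitution $t \mapsto ut$ followed by $x \mapsto x/u$; the multiplicative Haar measure $dt/|t|$ and the additive Haar $dx$ are both preserved (the latter because $|u|=1$). Under this substitution, the argument of $\overline{f}$ becomes $x/u + y/(ut) = u^{-1}(x+y/t)$, the $W_1$-factor becomes $W_1(a(y)n'(x/u))$, and the character/additive factor becomes $\chi_1\chi_2^{-1}(ut)\psi(ut)$. The hypothesis $\overline{f}(uA)f(uB) = \overline{f}(A)f(B)$, applied with $A = u^{-1}(x+y/t)$ and $B = x/u$, gives $f(x/u)\overline{f}(u^{-1}(x+y/t)) = f(x)\overline{f}(x+y/t)$. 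Hence for each $u \in U_1$,
\[
\ell_{\RS} = \int f(x)\overline{f}(x+y/t)\, W_1(a(y)n'(x/u))\, \chi_1\chi_2^{-1}(ut)\psi(ut)\, \frac{dt}{|t|}\, dx\, d^\times y.
\]
Averaging over $u \in U_1$ against the probability Haar measure and interchanging with the outer integrals collects the $u$-dependent factors into $F(x,y,t;W_1,U_1)$, yielding the stated formula.

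The main obstacle will be identifying the correct compound change of variables $(t,x) \mapsto (ut, x/u)$: neither substitution alone produces anything useful, but together they transfer the $u$-dependence off the factors $f$ and $\overline{f}$ (where $U_1$-invariance eliminates it) and onto $W_1$ and the character factors, where it can be averaged nontrivially. Standard technicalities regarding regularization of the Whittaker integral (as noted in \S\ref{sec-33-3}) and Fubini interchanges should be routine.
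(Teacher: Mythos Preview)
Your proposal is correct and follows essentially the same approach as the paper: explicitly compute $v_f$ and $\overline{W_{v_f}}$ on $a(y)n'(x)$ via the line model and the intertwiner, simplify to reach the intermediate formula without the $u$-average, then apply the compound substitution $t\mapsto ut$, $x\mapsto x/u$, invoke the $U_1$-invariance of $\overline{f}\otimes f$, and average. The computations, the choice of substitution, and the use of the invariance all match the paper's argument line for line.
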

\begin{proof}
  Set
  $g := a(y) n'(x) = \begin{pmatrix}
    y &  \\
    x & 1
  \end{pmatrix}$.
  For $t \in k$ one has
  $w n(t) g
  = 
  \begin{pmatrix}
    - x & -1 \\
    y + t x & t
  \end{pmatrix}$,
  hence
  \begin{align*}
    v_f(g)
    &=
      f(x)
      |y|^{1/2}
      \chi_1(y), \\
    \overline{v_f}(w n(t) g)
    &=
      \overline{f} \left( (y + t x)/t \right)
      \left\lvert y/t^2 \right\rvert^{1/2}
      \chi_1^{-1}(y/t) \chi_2^{-1}(t),
    \\
    v_f(g)
    \overline{W_{v_f}}(g)
    &=
      \int_{t \in k}
      v_f(g)
      \overline{v_f(w n(t) g)
      \psi(-t) }
      \, d t
    \\
    &=
      |y|
      f(x)
      \int_{t \in k}
      \overline{f} \left( x + \frac{y}{t} \right)
      \chi_1 \chi_2^{-1}(t)
      \psi(t)
      \, \frac{d t}{|t|}.
  \end{align*}
  Integrating 
  against $W_1(a(y) n'(x)) |y|^{-1} \, d x \, d^\times y$
  gives
  that
  $\ell_{\RS}(W_1, \overline{W_{v_f}}, v_f)$ equals
  \[
  \int_{\substack{x \in k  \\ y \in k^\times \\ t \in k} }
  f(x)
  \overline{f} \left( x + \frac{y}{t} \right)
  W_1(a(y) n'(x))
  \chi_1 \chi_2^{-1}(t)
  \psi(t)
  \, \frac{d t}{|t|}
  \, d x \, d^\times y.
  \]
  To obtain the claimed formula,
  we apply for $u \in U_1$ the substitutions
  $t \mapsto u t, x \mapsto x/u$, invoke the assumed
  $U_1$-invariance of $\overline{f} \otimes f$, and average over $u$.
\end{proof}

\subsection{Gauss sums}
\label{sec-33-6}
We shall repeatedly use the following without explicit mention:
\begin{lemma}\label{lem:gauss-sums}
  Let $U_1 \leq \mathfrak{o}^\times$ be an open subgroup
  and
  $\omega$ a character of $\mathfrak{o}^\times$.
  For $t \in k^\times$,
  set $H(t) := H(t,\omega,U_1) := \mathbb{E}_{u \in U_1}
  \omega(u t) \psi(u t)$, where $\mathbb{E}$ denotes
  integration with respect to the probability Haar.
  \begin{enumerate}
  \item For fixed $U_1$, one has
    $H(t) = 0$ unless
    $-\nu(t) = c(\omega) + O(1)$, in which case
    $H(t) \ll C(\omega)^{-1/2}$, with implied constants
    depending at most upon $U_1$.
  \item Suppose $U_1 = \mathfrak{o}^\times$
    and $c(\omega) > 0$.
    Then $H(t) = 0$ unless $- \nu(t) = c(\omega)$,
    in which case $H(t)$ is independent of $t$
    and has magnitude $|H(t)| = c C(\omega)^{-1/2}$ for some $c > 0$
    depending
    only upon $k$.
  \end{enumerate}
\end{lemma}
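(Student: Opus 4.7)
The plan is to reduce both parts to classical Gauss sum computations, leveraging the group structure of $U_1$ and an explicit representation of $\omega$ near $1$ as a twist of $\psi$. Throughout, I extend $\omega$ to a character of $k^\times$ arbitrarily (say, trivially on $\varpi^{\mathbb{Z}}$); the choice is immaterial since only magnitudes and vanishing matter. Then $H(t) = \omega(t) g(t)$ with $g(t) := \mathbb{E}_{u \in U_1} \omega(u) \psi(ut)$, so it suffices to analyze $g(t)$.

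For part (2), writing $t = \varpi^{-m} t_0$ with $t_0 \in \mathfrak{o}^\times$ and substituting $u \mapsto u t_0^{-1}$ (a measure-preserving bijection of $\mathfrak{o}^\times$) reduces $|g(t)|$ to $\left| \mathbb{E}_{u \in \mathfrak{o}^\times} \omega(u) \psi(\varpi^{-m} u) \right|$, depending on $t$ only through $m = -\nu(t)$. This is the classical Gauss sum attached to $(\omega, \psi)$ of level $n := c(\omega)$; covering $\mathfrak{o}^\times$ by cosets of $1 + \mathfrak{p}^n$, using triviality of $\omega$ on $1 + \mathfrak{p}^n$, and invoking orthogonality of additive characters on the inner coset integrals produces vanishing for $m \neq n$ and magnitude $c_k q^{-n/2}$ for $m = n$, where $c_k > 0$ depends only on measure normalizations in $k$.

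For part (1), fix $r_0$ minimal with $1 + \mathfrak{p}^{r_0} \subseteq U_1$. Assume $n := c(\omega)$ is sufficiently large relative to $r_0$; the remaining cases are absorbed into the implied constants. The essential input is the \emph{twisting element} for $\omega$: setting $R := \lceil (n+1)/2 \rceil$, the map $h \mapsto \omega(1+h)$ is a character of the additive group $\mathfrak{p}^R/\mathfrak{p}^n$, since $(1+h)(1+h') \equiv 1+h+h' \pmod{\mathfrak{p}^{2R}}$ and $2R > n$; Pontryagin duality via $\psi$ then yields $a_\omega \in k^\times$ with $\nu(a_\omega) = -n$ such that $\omega(1+h) = \psi(a_\omega h)$ for all $h \in \mathfrak{p}^R$. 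Partitioning $U_1$ into cosets of $1 + \mathfrak{p}^R$ yields
\[
g(t) = \frac{1}{[U_1 : 1+\mathfrak{p}^R]} \sum_{[v] \in U_1/(1+\mathfrak{p}^R)} \omega(v) \psi(vt) \, \mathbb{E}_{h \in \mathfrak{p}^R} \psi((a_\omega + vt) h).
\]
The inner average vanishes unless $\nu(a_\omega + vt) \geq -R$, which—given $\nu(a_\omega) = -n < -R$—forces $-\nu(t) = n$ and confines $v$ to a single coset modulo $1 + \mathfrak{p}^{n-R}$. The count of surviving cosets of $1 + \mathfrak{p}^R$ is then $q^{2R-n}$; each contributes at most $[U_1 : 1+\mathfrak{p}^R]^{-1} \leq q^{r_0 - R}$ in magnitude, giving $|g(t)| \leq q^{R + r_0 - n} \ll q^{-n/2}$ with implied constant depending on $r_0$. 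For the matching one-sided support bound $-\nu(t) \leq n + O_{U_1}(1)$, rerun the same calculation instead with $R \geq \max(r_0, n)$, so that $\omega|_{1+\mathfrak{p}^R}$ is trivial and the inner average forces $-\nu(t) \leq R$.

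The main obstacle is verifying the twisting element representation of $\omega$ on $1 + \mathfrak{p}^R$; once this conversion from multiplicative to additive is in hand, the rest of the argument is a routine bookkeeping exercise comparing indices of nested filtration subgroups $1 + \mathfrak{p}^j$.
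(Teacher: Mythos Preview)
Your argument is correct and considerably more detailed than the paper's own treatment, which simply declares the result ``standard'' and points to \cite[3.1.14]{michel-2009}. The linearization of $\omega$ on the half-level subgroup $1+\mathfrak{p}^R$ via a twisting element $a_\omega$, followed by additive orthogonality, is exactly the stationary-phase mechanism that reference encodes, so your route is the intended one rather than a genuinely different approach.

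One small caveat: the sentence ``the remaining cases are absorbed into the implied constants'' is fine for the magnitude bound (since $|H(t)|\leq 1$ and $C(\omega)^{-1/2}$ is bounded below for bounded $c(\omega)$), and your separate paragraph handles the upper support bound $-\nu(t)\leq c(\omega)+O_{U_1}(1)$ for all $\omega$. But the \emph{lower} support bound $-\nu(t)\geq c(\omega)-O_{U_1}(1)$ is not literally covered for the finitely many $\omega$ with $\omega|_{U_1}$ trivial: there $g(t)=\mathbb{E}_{u\in U_1}\psi(ut)$ is nonzero for all $t\in\mathfrak{o}$, so $-\nu(t)$ is unbounded below. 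This is arguably a wrinkle in the lemma's phrasing rather than in your proof, and it is irrelevant to every application in the paper (where $c(\omega)=N\to\infty$, hence $\omega|_{U_1}$ is eventually nontrivial). If you want the literal statement for all $\omega$, simply add the one-line observation that for $c(\omega)>r_0$ and $\nu(t)\geq 0$ one has $\psi(ut)\equiv 1$ on $U_1$ and $\mathbb{E}_{u\in U_1}\omega(u)=0$ by nontriviality of $\omega|_{1+\mathfrak{p}^{r_0}}$; this gives $-\nu(t)\geq 1$ whenever $H(t)\neq 0$, which combined with $c(\omega)>r_0$ bounded and your main argument handles everything except the finitely many $\omega$ trivial on $U_1$, for which the lemma as stated does not quite hold.
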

\begin{proof}
  For $U_1 = \mathfrak{o}^\times$, these are standard assertions concerning
  Gauss sums.  The standard proof adapts to the general case
  (compare with \cite[3.1.14]{michel-2009}).
\end{proof}

\section{Local study of non-archimedean microlocal lifts}
\label{sec:basic-theory-microlocal}
Recall Definition \ref{defn:microlocal-lifts} and
the statement of Lemma \ref{thm:compute-microlocal-lifts}.
Retain the notation of \S\ref{sec-33}.

\subsection{Proof of Lemma \ref{thm:compute-microlocal-lifts}: determination of microlocal lifts}\label{sec:proof-determination-microlocal}
  For any character $\chi : k^\times \rightarrow \mathbb{C}^\times$, the non-equivariant twisting
  isomorphism
  $\pi \rightarrow \pi' := \pi \otimes \chi$
  induces non-equivariant linear isomorphisms
  \begin{align}\label{eqn:twisting}
    V &:= \{ \text{ microlocal lifts in $\pi$
      of orientation $(\omega_1,\omega_2)$ } \} \\ \nonumber
    &\quad\cong
    \{ \text{ microlocal lifts in $\pi'$
      of orientation $(\omega_1',\omega_2')$ } \}
  \end{align}
  with $\omega_i' := \omega_i \cdot \chi|_{\mathfrak{o}^\times}$.
  We thereby reduce to verifying
  the conclusion in the special case $\omega_1 = 1$.
  Suppose $V \neq 0$.
  Write $\omega := \omega_2$.
  By the convention $N \geq 1$ of Definition \ref{defn:microlocal-lifts},
  $\omega$ is ramified.
  The central character $\chi_\pi$ of $\pi$
  restricts to $\omega$,
  hence is ramified;
  by Theorem \ref{thm:basic-local-newvectors}, $\dim V = \max(0,1 + c(\pi) -
  c(\chi_\pi))$,
  and so $V \neq 0$ only if $c(\pi) \geq c(\chi_\pi)$.
  By Lemma \ref{lem:extremal-central-chars}, the latter happens
  only if $c(\pi) = c(\chi_\pi)$
  and $\pi$ has the indicated form, in which case $\dim V = 1$.
  The explicit description of $V$ now follows in general from
  \eqref{eqn:twisting}.
\subsection{Explicit formulas}
\label{sec-explicit-formulas}
  Let $\pi := \chi_1 \boxplus \chi_2$ and $\omega_i :=
  \chi_i|_{\mathfrak{o}^\times}$
  with $N := c(\omega_1/\omega_2) \geq 1$.
\begin{lemma}
  Define $f_1, f_2 \in C^\infty(k)$
  (as if in the ``line model'' of \S\ref{sec-33-2})
  by
  \[
  f_1(x) := 1_{\mathfrak{p}^{N_2}}(x),
  \quad
  f_2(x) := 1_{\mathfrak{p}^{N_1}}(1/x) |1/x| \chi_1^{-1} \chi_2(x)
  \]
  and
  $v_1, v_2 \in \pi$ in the induced model
    on $g = \begin{pmatrix}
      \ast & \ast \\
      c & d
    \end{pmatrix} \in \GL_2(k)$ by
    \begin{align}\label{eq:induced-formula-v1}
      v_1 (g) &:= v_{f_1}(g) =
                1_{\mathfrak{p}^{N_2}}(c/d)
                \left\lvert \frac{\det g}{d^2} \right\rvert^{1/2}
                \chi_1(\det (g)/ d) \chi_2(d), \\ \label{eq:induced-formula-v2}
      v_2 (g) &:= v_{f_2}(g) =
                1_{\mathfrak{p}^{N_1}}(d/c)
                \left\lvert \frac{\det g}{c^2} \right\rvert^{1/2}
                \chi_1(\det (g)/ c) \chi_2(c)
    \end{align}
    and
   $W_1, W_2 \in \pi$ in the Kirillov
    model
    $\mathcal{K}(\pi,\psi)$ 
    by\footnote{Recall that $\psi$ is assumed unramified.}
    \begin{align}\label{eq:microlocal-kirillov-formula-w1}
      W_1(y) &:=
               1_{\mathfrak{p}^{-N_1}}(y)
               |y|^{1/2}
               \chi_1(y),
               \quad 
      W_2(y) :=
               1_{\mathfrak{p}^{-N_1}}(y)
               |y|^{1/2}
               \chi_2(y).
    \end{align}
    Then $v_1, W_1$ and $v_2, W_2$
    are microlocal lifts
    of orientations $(\omega_1,\omega_2)$
    and $(\omega_2,\omega_1)$, respectively.
\end{lemma}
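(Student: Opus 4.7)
The strategy is to verify the transformation property in Definition \ref{defn:microlocal-lifts} directly from the explicit formulas \eqref{eq:induced-formula-v1}--\eqref{eq:microlocal-kirillov-formula-w1}.

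For $v_1$, any $g = \begin{pmatrix} a & b \\ c & d \end{pmatrix}$ in the relevant subgroup of $\GL_2(\mathfrak{o})$ has $a \in \mathfrak{o}^\times$ (since $\det g \in \mathfrak{o}^\times$, $bc \in \mathfrak{p}^N$ and $N \geq 1$), so it admits the Iwahori factorization $g = n'(c/a) \diag(a, \det(g)/a) n(b/a)$ with $c/a \in \mathfrak{p}^{N_2}$ and $b/a \in \mathfrak{p}^{N_1}$. The required relation thus reduces to three separate assertions: invariance of $v_1$ under $n(\mathfrak{p}^{N_1})$, invariance under $n'(\mathfrak{p}^{N_2})$, and the diagonal eigenrelation $\pi(\diag(y_1,y_2)) v_1 = \omega_1(y_1)\omega_2(y_2) v_1$ for $y_1, y_2 \in \mathfrak{o}^\times$. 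The $n'$-invariance is immediate from translation invariance of $f_1 = 1_{\mathfrak{p}^{N_2}}$, and the diagonal eigenrelation is built into the induced model. For the $n$-invariance, writing $t := c/d$, the ratio $v_1(g n(x))/v_1(g)$ on the open cell equals $[f_1(t/(1+tx))/f_1(t)] \cdot |1+tx|^{-1} \cdot (\chi_2/\chi_1)(1+tx)$; either $f_1(t) = 0$ and both sides vanish, or $t \in \mathfrak{p}^{N_2}$, forcing $tx \in \mathfrak{p}^N$ and $1+tx \in 1 + \mathfrak{p}^N \subset \mathfrak{o}^\times$, so the indicator ratio and absolute-value factor equal $1$, and $(\chi_2/\chi_1)(1+tx) = 1$ because $c(\chi_2/\chi_1) = N$. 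The argument for $v_2$ is parallel, based on formula \eqref{eq:induced-formula-v2} with the variable $d/c$ in place of $c/d$: the $n$-invariance is now automatic and the $n'$-invariance is the nontrivial step, handled by the same reasoning. The diagonal eigenrelation coming from \eqref{eq:induced-formula-v2} yields orientation $(\omega_2, \omega_1)$ directly.

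For $W_1$ and $W_2$, I would identify each as a nonzero scalar multiple of the image of $v_1, v_2$ under the $G$-equivariant intertwiner $v \mapsto W_v$ to the Whittaker model recalled in \S\ref{sec-33-3}; the microlocal lift property of $v_i$ then transports automatically to $W_i$. The computation is a Gauss-sum exercise: expanding
\[
W_{v_1}(a(y)) = \int_k v_1(w n(x) a(y)) \psi(-x) \, dx
\]
and applying the substitutions $u = y/x$ and then $s = -y/u$ reduces the integral, up to a prefactor of $|y|^{1/2} \chi_1(y)$ times a $y$-independent constant, to
\[
\int_{\nu(s) \leq \nu(y) - N_2} (\chi_2/\chi_1)(s) \psi(s) \, d^\times s.
\]
By Lemma \ref{lem:gauss-sums}, since $c(\chi_2/\chi_1) = N$, this vanishes unless the shell $\nu(s) = -N$ lies in the integration range, i.e., unless $\nu(y) \geq -N_1$, in which case it equals a fixed nonzero constant. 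The result is a nonzero scalar multiple of $W_1(y) = 1_{\mathfrak{p}^{-N_1}}(y) |y|^{1/2} \chi_1(y)$, as desired; the case of $W_2$ is parallel, starting from $v_2$.

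The main obstacle is careful bookkeeping in the Gauss-sum computation to confirm that the proportionality scalar is nonzero and that the emergent support condition is precisely $\mathfrak{p}^{-N_1}$; the boundary case $N_2 = 0$ (and symmetrically $N_1 = 0$ for $W_2$), where the outer integration region collapses, may require a small separate remark.
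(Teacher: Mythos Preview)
Your proof is correct but takes a different route from the paper's. The paper's argument is a two-line reduction: the twisting isomorphism \eqref{eqn:twisting}, already established in the proof of Lemma \ref{thm:compute-microlocal-lifts}, reduces the claim for $v_1, W_1$ to the case $\chi_1 = 1$ (and for $v_2, W_2$ to $\chi_2 = 1$), in which a microlocal lift of orientation $(1,\omega_2)$ is by definition a newvector of support $-N_1..N_2$; the explicit formulas for such newvectors in both the induced and Kirillov models are then quoted from \cite{Sch02}.

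Your approach instead verifies the transformation property of Definition \ref{defn:microlocal-lifts} directly via the Iwahori factorization and then computes the intertwiner $v_1 \mapsto W_{v_1}$ by hand using the Gauss-sum Lemma \ref{lem:gauss-sums}. This is longer but fully self-contained, and the intertwiner computation you outline is essentially the content of the cited reference specialized to the present situation. Your concern about the boundary cases $N_1 = 0$ or $N_2 = 0$ is unfounded: the only shell contributing to the $s$-integral is $\nu(s) = -N$, and the condition that this shell lie in the range $\nu(s) \leq \nu(y) - N_2$ reads $\nu(y) \geq -N_1$ regardless of whether $N_1$ or $N_2$ vanishes, so no separate remark is needed.
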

\begin{proof}
  The formulas
  for $W_1, v_1$
  in the case $\chi_1 = 1$
  and those for $W_2, v_2$
  in the case $\chi_2 = 1$
  follow from known formulas
  for standard newvectors \cite{Sch02}; the general case
  follows from the twisting isomorphisms (\ref{eqn:twisting}).
\end{proof}





\subsection{Stationary phase analysis of local Rankin--Selberg integrals}
\label{sec-stationary-phase-local-rs}
In this section we apply stationary phase analysis to evaluate and estimate
some local Rankin--Selberg integrals involving microlocal lifts
and newvectors.
We use these in \S\ref{sec:microlocal-basics}
to prove
Theorem \ref{thm:microlocal-basics}
and
Theorem
\ref{thm:weakly-subconvex}.
Retain the notation of \S\ref{sec-33-1}.  Let $\chi_1, \chi_2$ be
unitary characters of $k^\times$ for which
$N := c(\chi_1/\chi_2)$ is positive.  Let
$\pi = \chi_1 \boxplus \chi_2$ be the corresponding generic
irreducible unitary principal series representation of
$\GL_2(k)$, realized in its induced model and equipped with the
norm given in \S\ref{sec-33-2}.  Equip the complex-conjugate representation
$\overline{\pi}$ with the compatible unitary structure.  Define
the intertwiner
$\pi \ni v \mapsto W_v \in \mathcal{W}(\pi,\psi)$ as in \S\ref{sec-33-3}.
Let $\sigma$ be a generic irreducible unitary representation
of $\PGL_2(k)$,
realized in its $\psi$-Whittaker model
$\sigma = \mathcal{W}(\sigma,\psi)$.
\begin{theorem}\label{thm:local-rs}
  Let $v \in \pi$ be a microlocal lift of orientation
  $(\chi_1|_{\mathfrak{o}^\times},
  \chi_2|_{\mathfrak{o}^\times})$,
  let $v' \in \pi$ be a generalized newvector,
  and let $W_1 \in \sigma$.
  \begin{enumerate}
  \item[(I)]
    If $N$ is large enough
    in terms of $W_1$,
    then
    \[
    \ell_{\RS}(W_1, \overline{W_v}, v)
    = c q^{-N/2}
    \|v\|^2 \int_{y \in k^\times} W_1(y) \, d^\times y
    \]
    where\footnote{
      The integral defining $c$ should be interpreted
      in the usual way
      as (for instance)
      a limit of integrals over increasing finite unions of $\mathfrak{o}^\times$-cosets.}
    $c := q^{N/2} \int_{t \in k^\times} \chi_1 \chi_2^{-1}(t) \psi(t) \,
    \frac{d t}{|t|} \asymp 1$
    is a complex scalar which is independent of $W_1$ and whose
    magnitude depends only upon $k$.
  \item[(II)] One has
    $\ell_{\RS}(W_1 \otimes \overline{W_{v'}} \otimes v') \ll
    q^{-N/2} \|v'\|^2$
    with the implied constant depending at most upon $W_1$.
  \item[(III)] Suppose that $\nu(2) = 0$,
    $\chi_\pi$ is unramified,
    $\|v'\| = \|v\|$,
    the support of $v'$ is $-N..N$,
    $\sigma$ is unramified,
    and $W_1 \in \sigma$ is spherical,
    so that $N = c(\chi_1) = c(\chi_2)$
    and $c(\pi) = 2 N$.
    Then
    $\ell_{\RS}(W_1, \overline{W_v}, v)
    = \ell_{\RS}(W_1, \overline{W_{v'}}, v')$.
  \end{enumerate}
\end{theorem}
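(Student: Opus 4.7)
The proof of all three parts follows a common strategy: expand each Rankin--Selberg integral via Lemma~\ref{lem:appl-diag-invar}, which reduces it to an integral in $(x,y,t)$, and then extract the arithmetic content through the Gauss-sum estimates of Lemma~\ref{lem:gauss-sums}. Throughout, I would use the explicit line-model description $v = v_{f_1}$ with $f_1 = 1_{\mathfrak{p}^{N_2}}$ of the microlocal lift from \S\ref{sec-explicit-formulas}, and the explicit shape of $f'$ for a generalized newvector $v' = v_{f'}$ from Lemma~\ref{lem:balanced-newvectors-in-induced-model}.

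For part~(I), $\overline{f_1}\otimes f_1$ is invariant under $U_1 = \mathfrak{o}^\times$, so Lemma~\ref{lem:appl-diag-invar} applies with the full group of units. For $N$ large enough that $n'(\mathfrak{p}^{N_2})$ lies in the right-stabilizer of $W_1$, the identity $W_1(a(y)n'(x/u)) = W_1(y)$ holds on the support $x \in \mathfrak{p}^{N_2}$ for every $u \in \mathfrak{o}^\times$. The $u$-average factors $F$ as $W_1(y)$ times the Gauss sum $H(t,(\chi_1/\chi_2)|_{\mathfrak{o}^\times},\mathfrak{o}^\times)$, which by Lemma~\ref{lem:gauss-sums}(2) is supported on $\nu(t) = -N$ and is constant there of magnitude $\asymp q^{-N/2}$. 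The $t$-integral produces the claimed constant $c$; the $x$-integral yields $\vol(\mathfrak{p}^{N_2}) = q^{-N_2} = \|v\|^2$; and the remaining constraint on $y$ becomes $y \in \mathfrak{p}^{-N_1}$, which for $N$ large contains the support of $W_1$, so the $y$-integral extends to $\int_{k^\times}W_1(y)\,d^\times y$.

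For part~(II), apply Lemma~\ref{lem:appl-diag-invar} with any open $U_1 \leq \mathfrak{o}^\times$ small enough that $\overline{f'}\otimes f'$ is $U_1$-invariant; by Lemma~\ref{lem:balanced-newvectors-in-induced-model}, such a $U_1$ can be chosen with index depending only on $W_1$. Lemma~\ref{lem:gauss-sums}(1) then yields $|F(x,y,t;W_1,U_1)| \ll_{W_1} q^{-N/2}$ on the support $\nu(t) = -N + O(1)$. After the change of variables $s = y/t$, the remaining absolute integral factorizes: the $t$-integral contributes $O(1)$ from its bounded support, and the $x,s$ integral of $|f'(x)f'(x+s)|$ is controlled by the compact support of $f'$, giving a bound of $O(\|v'\|^2)$. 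This yields $\ell_{\RS} \ll q^{-N/2}\|v'\|^2$ as claimed.

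For part~(III), the hypothesis $\nu(2) = 0$ is used precisely to guarantee $c(\chi_1) = c(\chi_2) = N$: writing $\chi_i = \mu_0 \mu^{\pm 1}$ with $\mu_0$ unramified, the identity $c(\chi_1/\chi_2) = c(\mu^2) = N$ forces $c(\mu) = N$ only when the squaring map on $(1+\mathfrak{p})/(1+\mathfrak{p}^{N+1})$ is bijective, i.e., when $p \neq 2$. Lemma~\ref{lem:balanced-newvectors-in-induced-model}(1), combined with a direct check of the diagonal torus action, identifies the balanced newvector of support $-N..N$ with orientation $(1,\chi_\pi|_{\mathfrak{o}^\times}) = (1,1)$ as $v' = v_{f'}$ with $f' = c'\chi_1^{-1} 1_{\mathfrak{o}^\times}$, where $|c'|^2(1-q^{-1}) = q^{-N_2}$ encodes the normalization $\|v'\| = \|v\|$. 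Since $\chi_1 \cdot \chi_1^{-1} = 1$, the function $\overline{f'}\otimes f'$ is again $\mathfrak{o}^\times$-invariant, so Lemma~\ref{lem:appl-diag-invar} applies with $U_1 = \mathfrak{o}^\times$. Because $W_1$ is spherical and $x/u \in \mathfrak{o}$ throughout, $K$-invariance of $W_1$ yields $W_1(a(y)n'(x/u)) = W_1(y)$ just as in part~(I). Spherical Whittaker functions are supported on $\mathfrak{o}$ in the Kirillov model, which combined with $\nu(t) = -N$ forces $s := y/t \in \mathfrak{p}^N$; on this set, $\chi_1(1 + s/x) = 1$ since $c(\chi_1) = N$, so the character twist in $f'$ cancels and the remaining computation exactly reproduces the formula of part~(I). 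The heart of the argument, and the role of the assumption $\nu(2) = 0$, is precisely this cancellation of $\chi_1^{-1}(x)\chi_1(x+s)$ on the relevant support; without it, a nontrivial character factor would survive and the equality with part~(I) would fail.
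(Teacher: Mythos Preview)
Your arguments for (I) and (III) are correct and follow the paper's approach closely. Your argument for (II), however, has real gaps. The claim that Lemma~\ref{lem:gauss-sums}(1) directly gives $|F(x,y,t;W_1,U_1)| \ll_{W_1} q^{-N/2}$ overlooks that $W_1(a(y)n'(x/u))$ genuinely depends on $u$ when $|x|$ is large, and the support of $f'$ need not lie in $|x| \leq 1$: for a generalized newvector of support $m..m'$ the coset $\varpi^n\mathfrak{o}^\times$ (or ideal $\mathfrak{a}$) carrying $f'$ varies with $m$, so no fixed $U_1$ makes the Whittaker factor constant in $u$. The paper handles this by splitting into $|x| \leq 1$ and $|x| > 1$ and using the identity $W_1(a(y)n'(x/u)) = \psi((u-1)y/x)\,W_1(a(y)n'(x))$ (valid for $u$ in a fixed open subgroup) to absorb the extra phase into a shifted Gauss sum. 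More seriously, your ``factorization'' after $s = y/t$ fails: the $W_1$ factor depends on $y = st$, coupling $s$ and $t$, and if you bound $|W_1|$ by a constant and discard it, the remaining integral $\int_{x,s} |f'(x)f'(x+s)|\,dx\,d^\times s$ diverges at $s \to 0$, since both $x$ and $x+s$ can lie in the support of $f'$ for $|s|$ arbitrarily small and $d^\times s$ is singular there. Convergence of the original integral comes precisely from the uniform $L^1$ bound $\sup_x \int_y |W_1(a(y)n'(x))|\, d^\times y \ll 1$, which the paper invokes explicitly and which your sketch throws away.

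Finally, you do not separate the case in which exactly one of $\chi_1,\chi_2$ is unramified, where by Lemma~\ref{lem:balanced-newvectors-in-induced-model} one has $f' = c\,1_{\mathfrak{a}}$ for a fractional ideal rather than a unit coset. In this case even the corrected version of the above argument yields only a divergent bound over a range of $t$, because the nonvanishing of the Gauss sum no longer pins $|t|$ to a set of bounded multiplicative volume. The paper must exploit genuine oscillatory cancellation in the $y$-integral (the analysis of $I_3$ and $I_4$ culminating in \eqref{eq:task-I4}--\eqref{eq:vanishing-U2-integral}) to close this gap; this is the most delicate portion of the proof and is entirely absent from your proposal.
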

The most difficult assertion is (II), which is used only to
deduce the equidistribution of newvectors (Theorem
\ref{thm:equid-balanced-newvectors}).
Assertion (III)
serves only the purpose of illustration (cf. the discussion
after Theorem \ref{thm:microlocal-basics}).  The other main
results of this article (Theorems \ref{thm:weakly-subconvex},
\ref{thm:main}) require only (I), whose proof is very short.
\begin{proof}[Proof of (I)]
  Without loss of generality,
  let $v = v_f$ with $f(x) := 1_{\mathfrak{p}^{N_2}}(x)$.
  Because $N_2$ is large in enough in terms of $W_1$,
  we have whenever $f(x) \neq 0$
  that
  $W_1(a(y) n'(x/u)) = W_1(y)$ for all $u \in
  \mathfrak{o}^\times$.
  Lemma \ref{lem:appl-diag-invar}
  gives after the simplifications
  $f(x) \overline{f}(x + y/t)
  =
  1_{\mathfrak{p}^{N_2}}(x) 1_{\mathfrak{p}^{N_2}}(y/t)$
  and
  $1_{\mathfrak{p}^{N_2}}(x) F(x,y,t;W_1,\mathfrak{o}^\times)
  = 1_{\mathfrak{p}^{N_2}}(x) W_1(y) H(t)$
  with
  $H(t) := \mathbb{E}_{u \in \mathfrak{o}^\times} \chi_1
  \chi_2^{-1} (u t) \psi(ut)$
  that
  \[
  \ell_{\RS}(W_1, \overline{W_v}, v)=
  \int_{y \in k^\times}
  W_1(y)
  \int_{x \in k}
  1_{\mathfrak{p}^{N_2}}(x)
  \int_{t \in k}
  1_{\mathfrak{p}^{N_2}}(y/t)
  H(t)
  \, \frac{d t}{|t|}
  \, d x
  \, d^\times y.
  \]
  We have $W_1(y) H(t) = 0$ unless $|t| \asymp q^N$ and $|y|
  \ll 1$;
  because $N_1$ is large enough in terms of $W_1$,
  the factor $1_{\mathfrak{p}^{N_2}}(y/t) = 1$ is thus
  redundant.
  Since $\int_{x \in k} 1_{\mathfrak{p}^{N_2}}(x) \, d x =
  \int_{k} |f|^2 = \|v\|^2$, we obtain
  the required identity.
\end{proof}
\begin{proof}[Proof of (II)]
  Suppose first that $\chi_1$ and $\chi_2$ are both
  ramified.
  In that case, Lemma
  \ref{lem:balanced-newvectors-in-induced-model}
  says that $v' = v_f$ with $f$ a character multiple of
  the characteristic 
  function of some $\mathfrak{o}^\times$ coset.
  In particular:
  \begin{equation}\label{eq:properties-of-f-for-upper-bounds}
    \text{$f$ is supported on a coset of $\mathfrak{o}^\times$,
      and $\overline{f} \otimes f$ is $\mathfrak{o}^\times$-invariant.}
  \end{equation}
  From the mod-center identity
  $a(y) n'(x) \equiv n(y/x) a(y/x^{2}) w n(1/x)$,
  we have
  \begin{equation}\label{eqn:neat-identity-whittaker-function}W_1(a(y) n'(x))  = \psi(y/x) W_1(a(y/x^2) w n(1/x)).\end{equation}
From \eqref{eqn:neat-identity-whittaker-function} and standard
bounds on Whittaker functions,
we have\footnote{
  See \cite[3.2.3]{michel-2009},
  and recall that $\sigma$ is assumed generic and
  unitary.}
\begin{equation}\label{eq:uniform-integrability-whittaker-function}
  \sup_{x \in k} \int_{y \in k^\times} |W_1(a(y) n'(x))| \, d^\times y \ll 1.
\end{equation}
By \eqref{eqn:neat-identity-whittaker-function}, there exists a fixed open subgroup $U_1 \leq \mathfrak{o}^\times$
for which
\begin{equation}
  \label{eqn:whittaker-variation-wrt-u}
  W_1(a(y) n'(x/u))
  =
  W_1(a(y) n'(x))
  \times 
  \begin{cases}
    1 & \text{ for } |x| \leq 1,
    \\
    \psi((u-1) y/x)
    & \text{ for } |x| \geq 1.
  \end{cases}
\end{equation}
  Without loss of generality, suppose $\int_k |f|^2 = 1$.
  We apply Lemma \ref{lem:appl-diag-invar},
  split the integral according as $|x| \leq 1$ or not,
  and appeal to
  \eqref{eq:uniform-integrability-whittaker-function} 
  and \eqref{eqn:whittaker-variation-wrt-u};
  our task thereby reduces to showing
  with
  \begin{align*}
    H_1(t)
    &:=
      \mathbb{E}_{u \in U_1} \chi_1 \chi_2^{-1}(u t) \psi(u t),
    \\
    H_2(t,y/x)
    &:= 
      \psi(-y/x) \mathbb{E}_{u \in U_1} \chi_1 \chi_2^{-1}(u t) \psi(u (t +
      y/x))
  \end{align*}
  that the quantities
  \begin{align*}
    I_1 &:=
          \sup_{y \in k^\times} \int_{t \in k^\times}
          \int_{
          x \in k : |x| \leq 1 
    }
    \lvert f(x) \overline{f}(x + y/t)
    H_1(t) \rvert d^\times t \, d x \, d^\times y, \\
    I_2 &:=
          \sup_{y \in k^\times} \int_{t \in k^\times}
          \int_{
          x \in k : |x| > 1 
    }
    \lvert f(x) \overline{f}(x + y/t)
    H_2(t,y/x) \rvert d^\times t \, d x \, d^\times y
  \end{align*}
  are $O(q^{-N/2})$.
  We have $H_1(t) = 0$ unless $|t| \asymp q^{N}$,
  in which case $H_1(t) \ll q^{-N/2}$; the set of such $t$ has $d^\times t$-volume $O(1)$,
  so an adequate estimate for $I_1$
  follows
  from Cauchy--Schwartz applied to the $x$-integral.
  Similarly, $H_2(t,y/x) = 0$ unless
  $|t + y/x| \asymp q^N$,
  in which case $H_2(t,y/x) \ll q^{-N/2}$;
  the support condition on $f$ shows that
  $f(x) \overline{f}(x + y/t) =0$ unless
  $|t + y/x| = |t|$,
  we may conclude once again by Cauchy--Schwartz.\footnote{\label{footnote:weak-sc}
    The estimate just derived is essentially sharp when $f$ is supported in a fixed open subset of $k^\times$,
    but can be substantially sharpened
    when $f$ is ``unbalanced''
    in the sense that its support tends sufficiently rapidly with $N$ either to zero or
    infinity.
    The possibility of such sharpening is the simplest case of the ``weak
    subconvexity''
    phenomenon identified in \cite{PDN-AP-AS-que}.
  }

  We turn to the case that one of $\chi_1, \chi_2$ is
  unramified.
  By the assumption $c(\chi_1/\chi_2) \neq 0$,
  the other one is ramified.
  By symmetry, we may suppose that $\chi_1$ is unramified
  and $\chi_2$ is ramified.
  By Lemma \ref{lem:balanced-newvectors-in-induced-model}, we may suppose without loss of generality
  that $v' = v_f$ for $f =  1_{\mathfrak{a}}$
  with $\mathfrak{a} \subset
  k$
  a fractional $\mathfrak{o}$-ideal.
  Then
  $\overline{f} \otimes f$ is $\mathfrak{o}^\times$-invariant.
  We split the integral over $x \in k$ as above,
  and the same argument works for the range $|x| \leq 1$.
  The remaining
  range contributes
  \begin{equation}\label{eq:I3-defn-integral}
      I_3 :=
  \int_{x \in k: |x| > 1}
  \int_{y \in k^\times}
  \int_{t \in k^\times}
  1_\mathfrak{a}(x) 1_\mathfrak{a}(x + y/t)
  H_3(t, y/x; x)
  \, d x \, d^\times y \, d^\times t
  \end{equation}
  where
  \[
  H_3(t, y/x; x)
  :=
  W_1(a(y/x^2) w n(1/x))
  \mathbb{E}_{u \in U_1}
  \chi_1 \chi_2^{-1}(u t) \psi(u (t + y/x)).
  \]
  A bit more care is required than in the above argument, which
  gives now an upper bound of $+\infty$;
  the problem is
  that the nonvanishing of $H_3(t,y/x;x)$ no longer restricts
  $t$ to a volume $O(1)$ subset of $k^\times$.
  We do better here by
  exploiting additional cancellation coming from the
  $y$-integral:  Let $C_1, C_2$ be positive scalars, depending
  only upon $W_1, U_1$, so that
  \begin{equation}\label{eq:}
    H_3(t,y/x;x) \neq 0
    \implies
    C_1 q^N < |y/x + t| < C_2 q^N.
  \end{equation}
  If $|y/x| \geq  C_2 q^N$,
  then $H_3(t,y/x;x) \neq 0$
  only if $|t| = |y/x|$.
  If $|y/x| \leq  C_1 q^N$,
  then $H_3(t,y/x;x) \neq 0$
  only if $C_1 q^N < |t| < C_2 q^N$.
  Arguing as above, we reduce to considering the range
  $C_1 q^N < |y/x| < C_2 q^N$,
  in which
  $H_3(t,y/x;x) \neq 0$
  only if $|t| <  C_2 q^N$.
  The range $C_1 q^N \leq |t| < C_2 q^N$
  may be treated as before,
  so we reduce to showing that
  \begin{equation}\label{eq:task-I4}
    I_4 := \int_{
      \substack{
        x,y,t \in k, k^\times, k^\times :
        \\
        |x| > 1,
        \\
        C_1 q^N < |y/x| < C_2 q^N,
        \\
        |t| < C_1 q^N
      }
    }
    1_\mathfrak{a}(x) 1_\mathfrak{a}(x + y/t)
    H_3(t,y/x;x) \, d x \, d^\times y \, d^\times t = 0.
  \end{equation}
  Note that the conditions defining the integrand imply that $|x t/y| < 1$.
  There is an open subgroup $U_2$ of
  $\mathfrak{o}^\times$,
  depending only upon $W_1, U_1$,
  so that
  \begin{equation}\label{eq:some-ratio-will-be-in-U1}
    z \in U_2, |x t/y| < 1 \implies
    \frac{t + z y / x}{t + y/x} \in U_1,
  \end{equation}
  \begin{equation}\label{eq:whittaker-function-doesnt-change-much-z}
    |x| > 1, z \in U_2
    \implies
    W_1(a(z y/x^2) w n(1/x)) = W_1(a(y/x^2) w n(1/x)),
  \end{equation}
  \begin{equation}\label{eq:char-fn-frac-ideal-doesnt-change-much}
    |x t/y| < 1,
    z \in U_2
    \implies
    1_\mathfrak{a}(x + z y/t)
    =
    1_\mathfrak{a}(x + y/t).
  \end{equation}
  For $N$ large enough in terms of $W_1, U_1$ and hence $U_2$,
  we have
  \begin{equation}\label{eq:vanishing-U2-integral}
    |x t/y| < 1
    \implies
    \mathbb{E}_{z \in U_2}
    \chi_1^{-1} \chi_2(t + z y/x)
    = 0.
  \end{equation}
  In $I_4$, we substitute
  $y \mapsto y z$ with $z \in U_2$ and average over $z$;
  by \eqref{eq:char-fn-frac-ideal-doesnt-change-much}
  and \eqref{eq:whittaker-function-doesnt-change-much-z},
  our task
  reduces to establishing for $|x t/y| < 1$
  that
  \[
  \mathbb{E}_{z \in U_2}
  \mathbb{E}_{u \in U_1}
  \chi_1 \chi_2^{-1}(u t) \psi(u (t + z y/x)) = 0,
  \]
  which follows
  from
  \eqref{eq:vanishing-U2-integral}
  after
  the change of variables
  $u \mapsto u (t + y/x) / (t + z y/x)$ suggested
  by \eqref{eq:some-ratio-will-be-in-U1}.
\end{proof}  
\begin{proof}[Proof of (III)]
  By (I), our task
  reduces to showing that
  \[\ell_{\RS}(W_1, \overline{W_{v'}}, v') = c q^{-N/2} \|v'\|^2
  \int_{y \in k^\times} W_1(y) \, d^\times y\]
  with the same scalar $c$ as in (I).
  Suppose without loss of generality that
  $v' = v_f$ with $f := \chi_2 1_{\mathfrak{o}^\times}$.
  Note that $\overline{f} \otimes f$ is
  $\mathfrak{o}^\times$-invariant.
  If $f(x) \neq 0$, then
  $W_1(a(y) n'(x/u)) = W_1(y)$ for all $u
  \in \mathfrak{o}^\times$.
  Lemma \ref{lem:appl-diag-invar}
  gives
  after the simplification
  $f(x) F(x,y,t;W_1,\mathfrak{o}^\times)
  = f(x) W_1(y) H(t)$
  with
  $H$ as in the proof of (I)
  that
  \[
  \ell_{R S}(W_1, \overline{W_{v'}}, v') =
  \int_{y \in k^\times}
  \int_{x \in k}
  \int_{t \in k}
  W_1(y)
  f(x)
  \overline{f} ( x  + y/t )
  H(t)
  \, \frac{d t}{|t|}
  \, d x
  \, d^\times y.
  \]
  Because $\nu(2) = 0$,
  we have $c(\chi_2) = c(\chi_1 \chi_2^{-1}) = N$.
  Thus if $W_1(y) f(x) H(t) \neq 0$,
  then $y,x,t \in \mathfrak{o}, \mathfrak{o}^\times, \varpi^{-N}
  \mathfrak{o}^\times$
  and so
  $f(x)
  \overline{f} ( x  + y/t )= 1$.
  From $\int_{x \in k} 1_{\mathfrak{o}^\times}(x) \, d x = \int_k |f|^2 = \|v'\|^2$,
  we conclude.
\end{proof}

\begin{remark}\label{remark:mv-epic}
  \cite[3.4.2]{michel-2009} and Theorem \ref{thm:local-rs}(I) and \cite[(3.25)]{michel-2009} imply the following:
  Let $v_2, v_3 \in \pi$ be microlocal
  lifts of the same orientation
  and $v_1 \in \sigma$, realized in its Kirillov model
  $\mathcal{K}(\sigma,\psi)$.
  The formula $\|v_1\|^2 := \int_{y \in k^\times} |v_1(y)|^2 \,
  d^\times y$
  is known to define an invariant norm on $\sigma$.
  Suppose that $N$ is large enough in terms of $v_1$.
  Then
  \[
  \int_{g \in Z \backslash G}
  \prod_{i=1,2,3}
  \langle \pi_i(g) v_i, v_i \rangle
  =
  c q^{-N}
  \|v_2\|^2 \|v_3\|^2
  \int_{y \in k^\times}
  \langle a(y) v_1, v_1 \rangle \, d^\times y
  \]
  for some positive scalar $c \asymp 1$ depending only upon $k$.
  This identity solves the problem of producing a
  subconvexity-critical test vector for the local triple product
  period in the QUE case when the varying representation is
  principal series.  It would be interesting
  to verify whether the supercuspidal case follows
  similarly using a modification of Definition
  \ref{defn:microlocal-lifts} involving characters on an
  $\eps$-neighborhood in $\GL_2(\mathfrak{o})$
  of the points of a suitable non-split torus, where
  $\eps \asymp C(\overline{\pi} \otimes \pi)^{-1/4}$.
\end{remark}

\section{Completion of the proof\label{sec:microlocal-basics}}
\label{sec-8}
In this section,
$\varphi  \in \pi \in A_0(\mathbf{X})$ traverses
a sequence of $L^2$-normalized
microlocal lifts on $\mathbf{X}$ of level $N \rightarrow
\infty$.
Thus $\varphi$ and $\pi$, like most objects to be considered
in this section, depend upon $N$, but we omit this dependence
from our notation.
We use the abbreviations
\emph{fixed} to mean ``independent of $N$''
and \emph{eventually} to mean ``for large enough $N$.''
Asymptotic notation such as $o(1)$ refers to the $N \rightarrow \infty$ limit.
Our aim is to verify the conclusions of
Theorem \ref{thm:microlocal-basics}
and
Theorem
\ref{thm:weakly-subconvex}.

As $G$-modules, $\pi \cong \chi_1 \boxplus \chi_2$
for some unitary characters $\chi_1, \chi_2$ of
$\mathbb{Q}_p^\times$
for which $c(\chi_1/\chi_2) = N$.

Recall our simplifying assumption that $R$ is a maximal order.
This implies that for any irreducible $\mathcal{H}$-submodule $\pi'$ of $\mathcal{A}(\mathbf{X})$,
the vector space underlying $\pi'$ is an irreducible admissible $G$-module.
In other words, the local components
at all places $v \neq p$ are one-dimensional.

The function $\varphi$ has unitary central character,
so the measure $\mu_\varphi$ is invariant by the center.
Moreover, for each prime $\ell \mid \disc(B)$,
the involution $T_{\ell}$ acts on $\pi$ with some eigenvalue
$\pm 1$,
hence $\mu_\varphi$ is $T_{\ell}$-invariant.
The natural space of observables against which it suffices to test
$\mu_\varphi$
is thus
\[
\mathcal{A}^+(\mathbf{X}) := \left\{
  \Psi \in \mathcal{A}(\mathbf{X}) :
  \begin{array}{lr}
    T_{\ell} \Psi = \Psi \text{ for } \ell \mid \disc(B), \\
    z \Psi =
    \Psi \text{ for } z \in
    Z := \text{center of } G 
  \end{array}
\right\}.
\]
That space decomposes further as
$\mathcal{A}^+(\mathbf{X})
= (\oplus_{\chi} \mathbb{C} (\chi \circ \det)) \oplus
\mathcal{A}_0^+(\mathbf{X})$
where
\begin{itemize}
\item  $\chi$ traverses the set of quadratic characters
  of the compact group $\mathbb{Q}_p^\times /
  \mathbb{Z}[1/p]^\times$
  satisfying $\chi(\ell)= 1$ for $\ell \mid \disc(B)$, and
\item
  $\mathcal{A}_0^+(\mathbf{X}) := \mathcal{A}^+(\mathbf{X}) \cap
  \mathcal{A}_0(\mathbf{X})$,
  which decomposes further as a countable direct sum
  $\mathcal{A}_0^+(\mathbf{X}) = \oplus_{\sigma \in
    A_0^+(\mathbf{X})}\sigma$
  where we substitute $A$ for $\mathcal{A}$ to denote
  ``irreducible submodules of.''
\end{itemize}

Let $\sigma \in A^+(\mathbf{X})$ be fixed.
It is either one-dimensional and of the form
$\mathbb{C} (\chi \circ \det)$ for some $\chi$ as above,
or belongs to
$A_0^+(\mathbf{X})$ and is generic as a $G$-module.  Denote
by
$\ell : \sigma \otimes \overline{\pi } \otimes \pi \rightarrow
\mathbb{C}$
the $G$-invariant functional defined by integration over
$\mathbf{X}$.
\begin{lemma}\label{lem:one-dimensionals}
  Suppose $\sigma$ is
  one-dimensional
  and $\ell \neq 0$.
  Then $\sigma$ is trivial eventually.
\end{lemma}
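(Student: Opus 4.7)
The plan is to use the $G$-invariance of the trilinear form $\ell$, together with the microlocal transformation law of $\varphi$, to force $\chi$ to be trivial.

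The first step would be to observe that $G$ acts on $\sigma = \mathbb{C}(\chi \circ \det)$ by right translation as $(g \Psi)(x) = \Psi(xg) = \chi(\det g) \Psi(x)$, so $g$ acts by the scalar $\chi(\det g)$. For $g$ in the stabilizer
$$ H_N := \GL_2(\mathfrak{o}) \cap \begin{bmatrix} \mathfrak{o} & \mathfrak{p}^{N_1} \\ \mathfrak{p}^{N_2} & \mathfrak{o} \end{bmatrix}, $$
Definition \ref{defn:microlocal-lifts} gives $g \varphi = c(g) \varphi$ with $|c(g)| = 1$ (the orientation characters being unitary). The $G$-invariance of $\ell$ then collapses to
$$ \ell(\chi \circ \det, \overline{\varphi}, \varphi) = \chi(\det g) \, |c(g)|^2 \, \ell(\chi \circ \det, \overline{\varphi}, \varphi), $$
so the hypothesis $\ell \neq 0$ forces $\chi(\det g) = 1$ for every $g \in H_N$.

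The next step is to specialize to $g = \diag(1, d)$ with $d \in \mathbb{Z}_p^\times = \mathfrak{o}^\times$, which lies in $H_N$ for every $N \geq 1$; this yields $\chi|_{\mathbb{Z}_p^\times} = 1$. Combining with the hypothesis that $\chi$ factors through $\mathbb{Q}_p^\times / \mathbb{Z}[1/p]^\times$ (so that $\chi(p) = 1$ and $\chi(-1) = 1$ automatically) and the factorization $\mathbb{Q}_p^\times = \mathbb{Z}_p^\times \cdot p^{\mathbb{Z}}$, one obtains $\chi \equiv 1$, i.e., $\sigma$ is the trivial representation.

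There is no real obstacle here; the argument in fact applies for every $N \geq 1$, the qualifier \emph{eventually} being only an artifact of the uniformity conventions set at the top of the section. The essential input is the torus invariance $\diag(\mathfrak{o}^\times, \mathfrak{o}^\times) \subset H_N$, which makes the specialization step available.
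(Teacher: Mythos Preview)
Your argument has a genuine gap at the step where you write ``the hypothesis $\ell \neq 0$ forces $\chi(\det g) = 1$.''  What you have actually derived is
\[
  \ell(\chi \circ \det, \overline{\varphi}, \varphi)
  = \chi(\det g)\,\ell(\chi \circ \det, \overline{\varphi}, \varphi),
\]
which yields $\chi(\det g)=1$ only if the \emph{particular value} $\ell(\chi\circ\det,\overline{\varphi},\varphi)$ is nonzero.  The hypothesis $\ell \neq 0$ says merely that the trilinear form is not identically zero on $\sigma \otimes \overline{\pi}\otimes\pi$; since $\sigma$ is one-dimensional, this amounts to the bilinear form $B(\overline v,w):=\ell(\chi\circ\det,\overline v,w)$ being nonzero on $\overline{\pi}\times\pi$, and there is no reason this should force $B(\overline{\varphi},\varphi)\neq 0$ for your specific vector $\varphi$.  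The diagonal $G$-orbit of $\overline{\varphi}\otimes\varphi$ does not span $\overline{\pi}\otimes\pi$, so vanishing there does not propagate.

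Your closing remark that the argument works for every $N\ge 1$ and that ``eventually'' is a mere artifact is in fact a symptom of the gap rather than a bonus.  The paper's proof goes through Schur's lemma: $\ell\neq 0$ forces $\pi\cong\pi\otimes(\chi\circ\det)$ as $G$-modules, i.e., $\chi_1\boxplus\chi_2\cong\chi_1\chi\boxplus\chi_2\chi$, whence either $\chi$ is trivial or $\chi=\chi_1/\chi_2$, in which case $c(\chi)=N$.  Since $\chi$ is a fixed quadratic character, the second alternative is excluded only once $N>c(\chi)$, so the qualifier ``eventually'' is genuinely needed.  For small $N$ one can have nontrivial quadratic $\chi$ with $\pi\cong\pi\otimes\chi$ and $\ell\neq 0$; your computation then correctly gives $B(\overline{\varphi},\varphi)=0$, which is consistent but does not yield $\chi$ trivial.
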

\begin{proof}
  Write $\sigma = \mathbb{C} (\chi \circ \det)$
  for some quadratic character $\chi$.  By Schur's lemma,
  $\pi \cong \chi_1 \boxplus \chi_2$ is isomorphic
  as a $G$-module to $\pi \otimes \chi \circ \det \cong \chi_1 \chi \boxplus \chi_2 \chi$,
  which is known to happen
  only if either $\chi_1 = \chi_1 \chi$, in which case
  $\chi$ is trivial, or $\chi_1 = \chi_2 \chi$, in which
  case
  $c(\chi) = c(\chi_1/\chi_2) = N \rightarrow \infty$, which does
  not happen because $\chi$ is quadratic.\footnote{We
    use here that the local field $\mathbb{Q}_p$
    is not a function field of characteristic $2$.
  }
\end{proof}

We now prove
Theorem \ref{thm:microlocal-basics}.  It suffices to verify that the
various assertions hold for fixed $\Psi \in \sigma \in A^+(\mathbf{X})$.  They are tautological if
$\sigma$ is trivial, so by Lemma \ref{lem:one-dimensionals}, we reduce to
the case that $\sigma \in A_0^+(\mathbf{X})$ is generic.
Fix an unramified non-trivial character $\psi : \mathbb{Q}_p
\rightarrow \mathbb{C}^{(1)}$
and $G$-equivariant isometric isomorphisms $\sigma \cong
\mathcal{W}(\sigma,\psi)$,
$\pi \cong \chi_1 \boxtimes \chi_2$.
Denote by $\ell_{\RS} : \sigma \otimes \overline{\pi }
\otimes \pi
\rightarrow \mathbb{C}$ the trilinear form
defined in \S\ref{sec-33-5}.
By Theorem \ref{thm:uniqueness-trilinear-functionals}
and the nonvanishing of $\ell_{\RS}$,
there exists a complex scalar $\mathcal{L}^{1/2} \in
\mathbb{C}$
so that
\begin{equation}\label{eq:uniq-tril-short}
  \ell = \mathcal{L}^{1/2} \ell_{\RS}.
\end{equation}
Theorem \ref{thm:local-rs}(I) implies that
$\ell_{\RS}(\sigma(a(y)) \Psi, \overline{\varphi}, \varphi) = \ell_{\RS}(\Psi, \overline{\varphi}, \varphi)$ holds eventually
for fixed $y \in k^\times$; the required diagonal invariance
then follows from \eqref{eq:uniq-tril-short}.
If $p \neq 2$ and $\varphi '$ is an $L^2$-normalized newvector of
support $-N..N$
and $\Psi \in \sigma^K$ is spherical, then
Theorem \ref{thm:local-rs}(III)
gives $\ell_{\RS}(\Psi, \overline{\varphi}, \varphi)
= \ell_{\RS}(\Psi, \overline{\varphi'}, \varphi')$ eventually; the
lifting property
then follows from \eqref{eq:uniq-tril-short}.
For the equidistribution application,
we reduce by
Lemma \ref{lem:one-dimensionals}
and
\eqref {eq:uniq-tril-short}
and
Theorem \ref{thm:local-rs}(II)
to
showing that $\mathcal{L}^{1/2} =
o(p^{N/2})$ holds
under the hypothesis that for each fixed $\Psi_0 \in \sigma$,
one has $\ell(\Psi_0,\overline{\varphi}, \varphi) = o(1)$.
Let $\Psi_0 \in \sigma \cong \mathcal{W}(\sigma,\psi)$
be given in the Kirillov model by the characteristic function
of the unit group.
By
Theorem \ref{thm:local-rs}(I),
$\ell_{\RS}(\Psi_0,\overline{\varphi}, \varphi) \asymp
p^{-N/2}$ eventually, 
so our hypothesis and \eqref{eq:uniq-tril-short} give the required estimate for $\mathcal{L}^{1/2}$.
 
We turn to the proof of Theorem \ref{thm:weakly-subconvex}.
Our assumptions on $\pi$ and $\sigma$
imply that $\sigma \in A_0^+(\mathbf{X})$ and that the
adelizations of $\sigma, \overline{\pi}$ and $\pi$
at each $v \in S_B := \{\infty \} \cup \{\ell : \ell \mid
\disc(B)\}$
are one-dimensional
and have trivial tensor product, hence that the product of their 
normalized matrix coefficients is one;
by Ichino's formula \cite{MR2585578} and
\cite[3.4.2]{michel-2009},
it follows that
$L \asymp |\mathcal{L}^{1/2}|^2$,
where $L$ denotes the LHS of \eqref{eq:weakly-subconvex}
and $\mathcal{L}^{1/2}$ is as above (compare with Remark \ref{remark:mv-epic}).
By Theorem \ref{thm:main} and the argument of the previous paragraph,
$\mathcal{L}^{1/2} = o(p^{N/2})$.
Our goal is to show that $L = o(C^{1/4})$,
where $C := C(\sigma \times \overline{\pi} \times
\pi)$ is the global conductor;
the contribution to $C$ from $v \in S_B$ is bounded,
hence
$C
\asymp
C(\sigma_p \otimes \chi_1^{-1} \chi_2)
C(\sigma_p \otimes \chi_2^{-1} \chi_1)
C(\sigma_p)^2
\asymp
C(\chi_1^{-1} \chi_2)^4
=
p^{4 N}$.
The known estimate $\mathcal{L}^{1/2} = o(p^{N/2})$ thus translates
to the goal $L = o(C^{1/4})$, as required.

\bibliography{refs}{}
\bibliographystyle{plain}
\end{document}